\newtheorem{thm}{Theorem}[section]
\newtheorem{prop}[thm]{Proposition}
\newtheorem{lemma}[thm]{Lemma}
\newtheorem{definition}[thm]{Definition}
\newtheorem{remark}[thm]{Remark}
\newcommand{\leftharpoonupeq}{\mathrel{%
\raise.46ex\hbox{$\rightharpoonup $}%
\setbox0=\hbox{$\rightharpoonup$}%
\kern -.98\wd0 \lower.26ex\box0}}%
\def\rd{\mathbb{R}^d}
\def\rn{\mathbb{R}^n}
\def\rtwo{\mathbb{R}^2}
\def\rnd{ \mathbb{R}^{n\times d}}
\def\R{\mathbb{R}}
\def\o{\Omega}
\def\fe{\mathscr{F}_\varepsilon}
\def\fhom{\mathscr{F}_0}
\def\xe{\frac{x}{\varepsilon}}
\def\l{\left}
\def\r{\right}
\def\grad{\nabla}
\def\A{\mathbb{A}}
\def\div{\mathrm{div}}
\def\ue{u_\varepsilon}
\def\Cinfcpt{C^\infty_{\text{c}}}
\def\Cinfper{C^\infty_{\text{per}}}
\def\ker{\mbox{\rm Ker}}
\def\dboundary{d\mathscr{H}_y}
\def\ltwoper{L^2_{\text{per}}}
\def\honeper{H^1_{\text{per}}}
\def\gammalim{{\stackrel{\Gamma(X)-w}{\rightharpoonup}}}
\def\gammalimltwo{{\stackrel{\Gamma(L^2)-w}{\rightharpoonup}}}
\def\xetwo{\frac{x_2}{\varepsilon}}
\def\fourtra{\mathcal{F}}
\numberwithin{equation}{section}
\title{$\Gamma$-convergence of quadratic functionals with non uniformly elliptic conductivity matrices}
\author{ Lorenza D'Elia\footnote{Dipartimento di Matematica, 
 Universit\`a di Roma ``Tor Vergata'', via della ricerca scientifica 1, 00133 Roma, Italy {\tt e-mail: lorenza.delia@polito.it}}
}
\date{}
\begin{document}
\maketitle

\smallskip
\begin{abstract}
We investigate the homogenization through $\Gamma$-convergence for the $L^2(\o)$-weak topology of the conductivity functional with a zero-order term where the matrix-valued conductivity is assumed to be non strongly elliptic. Under proper assumptions, we show that the homogenized matrix $A^\ast$ is provided by the classical homogenization formula. We also give algebraic conditions for two and three dimensional $1$-periodic rank-one laminates such that the homogenization result holds.  For this class of laminates,  an explicit expression of $A^\ast$ is  provided   which is a generalization of the classical
laminate formula. We construct a two-dimensional counter-example which  shows an anomalous asymptotic behaviour of the conductivity functional. 
\end{abstract}

\smallskip
\noindent
{\bf Keywords:} quadratic functionals, homogenization, $\Gamma$-convergence, two-scale convergence, non-local functional

\smallskip
\noindent
{\bf AMS Classifications.} 74Q05, 35B27, 35B40, 49J45.

\section{Introduction}
In this paper, for a bounded domain $\Omega$ of $\R^d$, we study the homogenization through $\Gamma$-convergence of the conductivity energy with a zero-order term of the type
            \begin{equation}
            \label{funct}
            \fe(u) :=
            \begin{dcases}
            \int_{\o} \l\{A\l(\xe\r)\grad u\cdot\grad u + |u|^2\r\} dx,&\mbox{if $u\in H^1_0(\o)$},\\
                 &\\
            \hspace{2cm}\infty,&\mbox{if $u\in L^2(\o)\setminus H^1_0(\o)$}.
            \end{dcases}
            \end{equation}
The conductivity  $A$ is a $Y_d$-periodic, symmetric and non-negative matrix-valued function in $L^\infty(\rd)^{d\times d}$, denoted by $L^\infty_\text{per}(Y_d)^{d\times d}$, which is not strongly elliptic, \textit{i.e.}
    \begin{equation}
    \label{notstronell}
    \underset{y\in Y_d}{\mbox{\rm ess-inf}}\hspace{0.05cm} \l(\min \hspace{0.03cm}\l \{A(y)\xi\cdot\xi\hspace{0.02cm}:\hspace{0.02cm} \xi\in\rd, \, |\xi|=1 \r\}\r)\geq0.
    \end{equation} 
This condition holds true when the conductivity energy density has missing derivatives. This occurs, for example, when the quadratic form associated to $A$ is given by 
      \begin{linenomath}
      \begin{equation*}
      A\xi\cdot\xi := A'\xi'\cdot \xi'\qquad \mbox{for}\hspace{0.2cm} \xi=(\xi', \xi_d)\in\R^{d-1}\times\R,
      \end{equation*}
      \end{linenomath}
where $A'\in L^\infty_\text{per}(Y_d)^{(d-1)\times(d-1)}$ is  symmetric and non-negative matrix. 
It is known (see \textit{e.g.} \cite[ Chapters 24 and 25]{DM93}) that the strongly ellipticity of the matrix $A$, \textit{i.e.}
    \begin{equation}
    \label{strongell}
    \underset{y\in Y_d}{\mbox{\rm ess-inf}}\hspace{0.05cm} \l(\min \hspace{0.03cm}\l \{A(y)\xi\cdot\xi\hspace{0.02cm}:\hspace{0.02cm} \xi\in\rd, \, |\xi|=1 \r\}\r)>0,
    \end{equation}   
combined with the boundedness implies a compactness result of   
the conductivity functional 
    \begin{linenomath}
    \begin{equation*}
     u\in H^1_0(\o)\mapsto\int_{\o}A\l(\xe\r)\grad u\cdot \grad udx
    \end{equation*}
    \end{linenomath}
for the $L^2(\o)$-strong topology. The $\Gamma$-limit is given by
   \begin{linenomath}
   \begin{equation*}
   \int_{\o} A^\ast\grad u\cdot\grad u dx,
   \end{equation*}
   \end{linenomath}
where the matrix-valued function $A^\ast$ is defined by the classical homogenization formula 
    \begin{linenomath}
    \begin{equation}
    \label{stronghommat}
    A^\ast\lambda\cdot\lambda := \min \l\{\int_{Y_d} A(y)(\lambda+\grad v(y))\cdot(\lambda+\grad v(y))dy\hspace{0.03cm}:\hspace{0.03cm}v\in H^1_{\text{per}}(Y_d) \r\}.
    \end{equation}
    \end{linenomath}
The $\Gamma$-convergence for the $L^p(\o)$-strong topology, for $p>1$, for the class of integral functionals $F_\varepsilon$ of the form 
     \begin{equation}
     \label{intfunc}
     F_\varepsilon(u) = \int_{\o}f\l({x\over \varepsilon}, Du\r) dx, \qquad\mbox{for}\hspace{0.2cm} u\in W^{1,p}(\o, \R^m),
     \end{equation}
where $f: \o\times \mathbb{R}^{m\times d}\to \R$ is a Borel function, $1$-periodic in the first variable satisfying the standard growth conditions of order $p$, namely $c_1|M|^p \leq f(x, M)\leq c_2(|M|^p +1)$ for any $x\in\o$ and for any real $(m\times d)$-matrix $M$, has been widely studied and it is a classical subject (see \textit{e.g.} \cite[Chapter 12]{Brai02} and \cite[Chapter 24]{DM93}). On the contrary, the $\Gamma$-convergence of oscillating functionals for the weak topology on bounded sets of $L^p(\o)$ has been very few analysed. An example of the study of $\Gamma$-convergence for the $L^p(\o)$-weak topology can be found in the paper \cite{BCP04} where, in the context of double-porosity, the authors 
compare the $\Gamma$-limit for non-linear functionals analogous to \eqref{intfunc}  computed with respect to different topologies and in particular with respect to $L^p(\o)$-weak topology. 
\par In this paper, we investigate the $\Gamma$-convergence for the weak topology on bounded sets (a metrizable topology) of $L^2(\o)$ of the conductivity functional under  condition \eqref{notstronell}. In this case, one has no  \textit{a priori} $L^2(\o)$-bound on the sequence of gradients, which implies a loss of coerciveness of the investigated energy. 
\noindent To overcome this difficulty, we add a quadratic zeroth-order term of the form $\displaystyle\|u\|^2_{L^2(\o)}$, so that we immediately obtain the coerciveness in the weak topology of $L^2(\o)$ of $\fe$, namely, for $u\in H^1_0(\o)$,
         \begin{linenomath}
         \begin{equation*}
         \fe(u)\geq\int_{\o} |u|^2dx.
         \end{equation*}
         \end{linenomath} 
This estimate guarantees that $\Gamma$-limit for the weak topology on bounded sets of $L^2(\o)$ is characterized by conditions  $(i)$ and $(ii)$ of the Definition \ref{defGammaconv} below (see \cite[Proposition 8.10]{DM93}), as well as, thanks to  a compactness result (see \cite[Corollary 8.12]{DM93}),  $\fe$ $\Gamma$-converges for the weak topology of $L^2(\o)$, up to subsequences, to some functional.  We will show that, under the following assumptions: 
   \begin{itemize}
        \item[\rm (H1)] any two-scale limit $u_0(x,y)$ of a sequence $\ue$ of functions in  $L^2(\o)$ with bounded energy $\fe(\ue)$ does not depend on $y$ (see \cite[Theorem 1.2]{A92});
       \item[\rm (H2)] the space $V$ defined by
                        \begin{equation*}
                         V:=\l\{ \int_{Y_d}A^{1/2}(y)\Phi(y)dy \hspace{0.03cm}:\hspace{0.03cm} \Phi\in L^2_{\text{per}}(Y_d; \hspace{0.03cm}\R^d) \hspace{0.2cm} \mbox{\rm with}\hspace{0.2cm}\mbox{\rm div}\l(A^{1/2}(y)\Phi(y)\r) =0 \hspace{0.2cm}\mbox{in}\hspace{0.2cm}\mathscr{D}'(\R^d) \r\}
                        \end{equation*}
                       agrees with the space $\rd$,
        \end{itemize} 
the $\Gamma$-limit is given by  
       \begin{equation}
       \label{homfunc}
       \fhom(u) := 
       \begin{dcases}
       \int_{\o} \l\{A^\ast \grad u\cdot\grad u + |u|^2\r\}dx,&\mbox{if $u\in H^1_0(\o)$},\\
       &\\
       \hspace{2cm}\infty,&\mbox{if $u\in L^2(\o)\setminus H^1_0(\o)$},
       \end{dcases}
       \end{equation} 
where the homogenized matrix $A^\ast$ is given through the expected  homogenization formula 
  \begin{equation}
   \label{hommat}
   A^\ast \lambda\cdot \lambda :=\inf\l\{\int_{ Y_d}A(y)(\lambda+\grad v(y))\cdot(\lambda+\grad v(y))dy\hspace{0.02cm}:\hspace{0.02cm}v\in \honeper(Y_d) \r\}.
   \end{equation}  

\par We need to make assumption (H1) since    for any sequence $\ue$ with bounded energy, \textit{i.e.} $\sup_{\varepsilon>0}\fe(\ue)<\infty$, the sequence $\grad\ue$ in $L^2(\o; \hspace{0.02cm}\R^d)$ is not bounded due to the lack of ellipticity of the matrix-valued conductivity $A(y)$.  Assumption (H2) turns out to be equivalent to the positive definiteness of the homogenized matrix (see Proposition \ref{prop:positivedefinite}).
\par In the $2$D isotropic elasticity setting of \cite{BF19}, 
the authors make use of similar conditions as (H1) and (H2) in the proof of the main results (see \cite[Theorems 3.3 and 3.4]{BF19}). They investigate the limit in the sense of $\Gamma$-convergence for the $L^2(\o)$-weak topology of the elasticity functional with a zeroth-order term  in the case of two-phase  isotropic laminate materials where the phase $1$ is very strongly elliptic, while the phase $2$ is only strongly elliptic. The strong ellipticity of the effective tensor is preserved  through a homogenization process 
expect in the case when the volume fraction of each phase is $1/2$, as first evidenced by Guti\'{e}rrez  \cite{G98}. Indeed,  Guti\'{e}rrez   has provided   two and three dimensional examples of $1$-periodic rank-one laminates such that the homogenized tensor induced by a homogenization process, labelled $1^\ast$-convergence, is not strongly elliptic.   
These examples  have been revisited by means of a homogenization process using  $\Gamma$-convergence in the two-dimensional case of \cite{BF15} and in the three-dimensional case of \cite{BPM17}.
\par  In the present scalar case, we enlighten assumptions (H1) and (H2) which are the key ingredients to obtain the general $\Gamma$-convergence result Theorem \ref{genthm}. Using  Nguetseng-Allaire \cite{A92, N89} two-scale convergence, we prove that for any dimension $d\geq 2$, the $\Gamma$-limit $\fhom$ \eqref{homfunc} for the weak topology of $L^2(\o)$ actually agrees with the one obtained for the $L^2(\o)$-strong topology under uniformly ellipticity  \eqref{strongell}, replacing the minimum in \eqref{stronghommat} by the infimum in \eqref{hommat}. Assumption (H2) implies the coerciveness of the functional $\fhom$ showing that its domain is $H^1_0(\o)$ and that the homogenized matrix $A^\ast$ is positive definite. More precisely, the positive definiteness of $A^\ast$ turns out to be equivalent to assumption (H2) (see Proposition \ref{prop:positivedefinite}). 
 We also provide two and three dimensional  $1$-periodic rank-one laminates  which satisfy assumptions (H1) and (H2) (see Proposition \ref{propd2} for the two-dimensional case and Proposition  \ref{thmcasematrixdege} for  the three-dimensional case). Thanks to Theorem \ref{genthm}, the corresponding homogenized matrix $A^\ast$ is positive definite.  For this class of laminates, an alternative and independent proof of positive definiteness of $A^\ast$ is performed  using an explicit expression of $A^\ast$ (see Proposition \ref{prop:Appendix}). This expression generalizes the classical laminate formula  for non-degenerate phases (see \cite{T85} and also \cite[Lemma 1.3.32]{A12}, \cite{B94}) to the case of two-phase rank-one laminates  with degenerate and anisotropic phases.

\par The lack of  assumption (H1) may induce a  degenerate asymptotic behaviour of the functional $\fe$ \eqref{funct}. We provide a two-dimensional rank-one laminate with two degenerate phases for which the functional $\fe$ does $\Gamma$-converge for the $L^2(\o)$-weak topology to a functional $\mathscr{F}$ which differs from the one given by \eqref{homfunc} (see Proposition \ref{prop1}). In this example, any two-scale limit $u_0(x,y)$ of  a sequence with bounded energy $\fe(\ue)$, depends on the variable $y$. Moreover, we give two quite different expressions of the $\Gamma$-limit $\mathscr{F}$ which seem to be original up to the best of our knowledge. The energy density of the first expression is written with Fourier transform of the target function. The second expression appears as a non-local functional due to the presence of a convolution term. However, we do not know if the $\Gamma$-limit $\mathscr{F}$ is a Dirichlet form in the sense of Beurling-Deny \cite{BD58}, since the Markovian property is not stable by the $L^2(\o)$-weak topology (see Remark \ref{rmk:BDrepresentation}).

\smallskip

The paper is organized as follows. In Section 2, we prove a general $\Gamma$-convergence result (see Theorem \ref{genthm}) for the functional $\fe$ \eqref{funct} with any non-uniformly elliptic matrix-valued function $A$, under assumptions (H1) and (H2). In Section 3 we illustrate the general result of Section 2 by periodic two-phase rank-one laminates with two (possibly) degenerate and anisotropic phases in dimension two and three. We provide algebraic conditions so that assumptions (H1) and (H2)  are satisfied (see Propositions \ref{propd2} and \ref{thmcasematrixdege}). In Section $4$ we exhibit a two-dimensional counter-example where assumption (H1) fails, which leads us to a degenerate $\Gamma$-limit $\mathscr{F}$ involving a convolution term (see Proposition \ref{prop1}). Finally, in the Appendix we give an explicit formula for the homogenized matrix $A^\ast$ for any two-phase rank-one laminates with (possibly) degenerate phases. We also provide  an alternative proof of the positive definiteness of $A^\ast$ using an explicit expression of $A^\ast$ for the class of two-phase rank-one laminates introduced in Section 3 (see Proposition \ref{prop:Appendix}).

\subsection*{Notation}
   \begin{itemize}
   \item   For $i=1,\dots,d$, $e_i$ denotes the $i$-th vector of the canonical basis in $\rd$;
   \item $I_d$ denotes the unit matrix of $\R^{d\times d}$;
   \item $\honeper(Y_d; \rn)$ (resp. $\ltwoper(Y_d; \rn)$, $\Cinfper(Y_d; \rn)$) is the space of those functions in $H^1_{\text{loc}}(\rd; \rn)$ (resp. $L^2_{\text{loc}}(\rd; \rn)$,  $C^\infty_{\text{loc}}(\rd; \rn)$) that are $Y_d$-periodic;
      
   \item Throughout, the variable $x$ will refer to running point in a bounded open domain $\o\subset\rd$, while the variable $y$ will refer to a running point in $Y_d$ (or $k+Y_d$, $k\in\mathbb{Z}^d$);
   \item We write 
        \begin{linenomath}
        \begin{equation*}
         \ue\leftharpoonupeq u_0
        \end{equation*}
        \end{linenomath}
   with $\ue\in L^2(\o)$ and $u_0\in L^2(\o\times Y_d)$ if $\ue$ two-scale converges to $u^0$ in the sense of Nguetseng-Allaire (see  \cite{A92, N89}) 
   \item $\fourtra_1$ and $\fourtra_2$ denote the Fourier transform defined on $L^1(\mathbb{R})$ and $L^2(\mathbb{R})$ respectively. For $f\in L^1(\R)\cap L^2(\R)$, the Fourier transform $\fourtra_1$ of $f$ is defined by 
           \begin{linenomath}
           \begin{equation*}
           \fourtra_1(f)(\lambda) := \int_{\R}e^{-2\pi i\lambda x}f(x)dx.
           \end{equation*}
           \end{linenomath} 
   \end{itemize} 
\begin{definition}
\label{defGammaconv}
Let $X$ be a reflexive and separable Banach space endowed with the weak topology $\sigma(X, X')$, and let $\fe: X\to\R$ be a $\varepsilon$-indexed sequence of functionals. The sequence $\fe$ $\Gamma$-converges to the functional $\fhom:X\to\R$ for the weak topology of $X$, and we write $\fe\gammalim\fhom$, if for any $u\in X$, 
   \begin{itemize}
   \item[i) ] $\displaystyle\forall \ue\rightharpoonup u$, $\displaystyle\fhom(u)\leq \liminf_{\varepsilon\to 0} \fe(\ue)$, 
   \item[ii) ] $\displaystyle\exists\overline{u}_{\varepsilon}\rightharpoonup u$ such that $\displaystyle\lim_{\varepsilon\to 0}\fe(\overline{u}_{\varepsilon})=\fhom(u)$. 
   \end{itemize}
Such a sequence $\overline{u}_{\varepsilon}$ is called a recovery sequence.
\end{definition}
Recall that the weak topology of $L^2(\o)$ is metrizable on bounded sets, \textit{i.e.} there exists a metric $d$ on $L^2(\o)$ such that on every norm bounded subset $B$ of $L^2(\o)$ the weak topology coincides with the topology induced on $B$ by the metric $d$ (see \textit{e.g.} \cite[Proposition 8.7]{DM93}).

\section{A preliminary general $\boldsymbol{\Gamma}$-result}
In this section, we will prove the main result of this paper. As previously announced, up to a subsequence,  the sequence of functionals $\fe$, given by \eqref{funct}  with non-uniformly elliptic matrix-valued conductivity $A(y)$, $\Gamma$-converges for the weak topology on bounded sets of $L^2(\o)$ to some functional. Our aim is to show that $\Gamma$-limit is exactly $\fhom$ when $u\in H^1_0(\o)$.   
     \begin{thm}
     \label{genthm}
     Let $\fe$ be functionals given by \eqref{funct} with $A(y)$ a $Y_d$-periodic, symmetric, non-negative matrix-valued function in $L^\infty(\R^d)^{d\times d}$ satisfying \eqref{notstronell}. Assume the following assumptions 
        \begin{itemize}
             \item[\rm (H1)] any two-scale limit $u_0(x,y)$ of a sequence $\ue$ of functions in  $L^2(\o)$ with bounded energy $\fe(\ue)$ does not depend on $y$;
            \item[\rm (H2)] the space $V$ defined by
                         \begin{linenomath}                \begin{equation}
                         \label{setN}
                              V:=\l\{ \int_{Y_d}A^{1/2}(y)\Phi(y)dy \hspace{0.03cm}:\hspace{0.03cm} \Phi\in L^2_{\text{per}}(Y_d; \hspace{0.03cm}\R^d) \hspace{0.2cm} \mbox{\rm with}\hspace{0.2cm}\mbox{\rm div}\l(A^{1/2}(y)\Phi(y)\r) =0 \hspace{0.2cm}\mbox{in}\hspace{0.2cm}\mathscr{D}'(\R^d) \r\}
                             \end{equation}
                             \end{linenomath}
                             agrees with the space $\rd$.
             \end{itemize}
     
\noindent Then, $\fe$ $\Gamma$-converges  for the weak topology of $L^2(\o)$ to $\fhom$, \textit{i.e.} 
                \begin{linenomath}
                \begin{equation*}
                \fe\gammalimltwo\fhom,
                \end{equation*}
                \end{linenomath}    
         where  $\fhom $ is defined by \eqref{homfunc} and $A^\ast$ is given by \eqref{hommat}. 
     \end{thm}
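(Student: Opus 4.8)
The plan is to establish the two conditions (i) and (ii) of Definition \ref{defGammaconv} separately, using Nguetseng--Allaire two-scale convergence as the main tool, and to exploit the factorization $A = A^{1/2}A^{1/2}$ throughout so that the natural bounded quantity associated with a sequence of bounded energy is $\sigma_\varepsilon := A^{1/2}(x/\varepsilon)\grad\ue \in L^2(\o;\rd)$ rather than $\grad\ue$ itself. First I would fix a sequence $\ue \rightharpoonup u$ in $L^2(\o)$ with $\liminf_\varepsilon \fe(\ue) < \infty$ (otherwise (i) is trivial), pass to a subsequence realizing the liminf with $\sup_\varepsilon \fe(\ue) < \infty$, and extract two-scale limits: $\ue \leftharpoonupeq u_0(x,y)$ and $\sigma_\varepsilon \leftharpoonupeq \sigma_0(x,y)$ with $\sigma_0 \in L^2(\o\times Y_d;\rd)$, using the compactness of two-scale convergence on $L^2$-bounded sequences. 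By assumption (H1), $u_0(x,y) = u(x)$ is independent of $y$.

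The core of the lower bound is to identify the structure of $\sigma_0$ and then integrate out the $y$-variable. Testing the two-scale convergence of $\sigma_\varepsilon$ against oscillating test functions of the form $\varepsilon\,\psi(x)\,\varphi(x/\varepsilon)$ with $\varphi \in \Cinfper(Y_d;\rd)$ and using that $A^{1/2}(x/\varepsilon)$ is self-adjoint, together with the fact (again from (H1), or rather from the identification of the two-scale limit of $\ue$) that the two-scale limit $u_0$ carries no $y$-dependence, I expect to obtain that $\sigma_0(x,\cdot) \in L^2_{\mathrm{per}}(Y_d;\rd)$ satisfies $\div_y(A^{1/2}(y)\sigma_0(x,y)) = 0$ in $\mathscr D'(\rd)$ for a.e.\ $x$; more precisely, the relevant condition will come out as the statement that $y\mapsto A^{1/2}(y)\sigma_0(x,y)$ is (two-scale) divergence-free, which is exactly the constraint defining the space $V$ in \eqref{setN}. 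Here assumption (H2), $V = \rd$, is what guarantees that no information is lost: writing $\bar\sigma(x) := \int_{Y_d} A^{1/2}(y)\sigma_0(x,y)\,dy$, one shows $\bar\sigma = \grad u$ in the distributional sense (so in particular $u \in H^1_0(\o)$), by testing against gradients and passing to the limit. Then weak lower semicontinuity of the $L^2$-norm under two-scale convergence gives
\[
\liminf_{\varepsilon\to 0}\int_\o A(x/\varepsilon)\grad\ue\cdot\grad\ue\,dx
= \liminf_{\varepsilon\to 0}\|\sigma_\varepsilon\|_{L^2(\o\times Y_d)}^2
\ \geq\ \int_\o\!\int_{Y_d}|\sigma_0(x,y)|^2\,dy\,dx,
\]
and an application of Jensen's inequality in $y$ combined with the definition \eqref{hommat} of $A^\ast$ (recognizing that $\sigma_0(x,\cdot)$ competes, after writing $\sigma_0 = A^{1/2}(\lambda + \grad_y v)$ when $A$ is invertible, and more carefully via the cell-problem characterization in the degenerate case) yields $\int_\o\!\int_{Y_d}|\sigma_0|^2 \geq \int_\o A^\ast\grad u\cdot\grad u\,dx$. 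Adding the trivially lower-semicontinuous term $\int_\o|u|^2\,dx \leq \liminf \int_\o|\ue|^2$ closes condition (i).

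For the upper bound (ii), I would build the recovery sequence by the standard oscillating-corrector ansatz: given $u \in H^1_0(\o)$ (for $u \notin H^1_0(\o)$ both sides are $+\infty$ and there is nothing to prove), first treat $u \in \Cinfcpt(\o)$, pick for each $\lambda$ a near-optimal $v_\lambda \in \honeper(Y_d)$ in \eqref{hommat} — the infimum need not be attained because $A$ is degenerate, so an approximation argument with an error parameter $\delta \to 0$ is needed — set $\bar u_\varepsilon(x) := u(x) + \varepsilon\,\sum$(piecewise-affine interpolation)$\,v_{\grad u}(x/\varepsilon)$, check $\bar u_\varepsilon \to u$ strongly in $L^2(\o)$ (hence weakly), and compute $\limsup_\varepsilon \fe(\bar u_\varepsilon) \le \int_\o A^\ast\grad u\cdot\grad u\,dx + \int_\o|u|^2\,dx$ via the Riemann--Lebesgue lemma for periodic functions; then pass from $\Cinfcpt(\o)$ to general $u \in H^1_0(\o)$ by density together with the continuity of $\lambda\mapsto A^\ast\lambda\cdot\lambda$ (a quadratic form bounded above by $\|A\|_{L^\infty}|\lambda|^2$) and a diagonal argument, using that $\Gamma$-limsup is lower semicontinuous. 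The main obstacle, and the place where both (H1) and (H2) are genuinely used, is the lower bound: specifically, proving that the two-scale limit $\sigma_0$ of the fluxes satisfies the divergence-free cell constraint and that averaging it against $A^{1/2}$ recovers exactly $\grad u$ with no coercivity loss — this is where $V = \rd$ enters, ensuring that the only obstruction to $u$ having an $L^2$ gradient is absent, and it is also the step that genuinely requires the degenerate cell formula \eqref{hommat} (infimum, not minimum) rather than its uniformly elliptic counterpart. The passage handling non-attainment of the cell infimum, in both the lower and upper bounds, will require the most care.
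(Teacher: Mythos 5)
Your outline agrees with the paper's proof on the overall strategy for the lower bound (two-scale compactness, use (H1) to kill the $y$-dependence of $u_0$, work with $\sigma_\varepsilon = A^{1/2}(x/\varepsilon)\nabla u_\varepsilon$, invoke (H2) to recover a full gradient), but two of the concrete steps you describe are in the wrong direction or missing, and the recovery-sequence construction is a genuinely different (and more delicate) route than the one the paper takes.

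\emph{Misidentification of the constraint on $\sigma_0$.} You claim that testing $\sigma_\varepsilon$ against $\varepsilon\psi(x)\varphi(x/\varepsilon)$ yields $\div_y\bigl(A^{1/2}(y)\sigma_0(x,y)\bigr)=0$. This cannot be obtained: the standard way to get a divergence-free two-scale limit requires an a priori bound on $\div\,\sigma_\varepsilon$, which is not available here (there is no equation for $u_\varepsilon$, only an energy bound), and moreover $A^{1/2}\in L^\infty$ only, so $\div_y(A^{1/2}\varphi)$ is not a classical test function. The paper's argument goes the opposite way: one tests the two-scale limit $\Theta_0$ of $\sigma_\varepsilon$ against vector fields of the form $\Phi(y)\varphi(x)$ where it is $\Phi$ that is chosen with $\div(A^{1/2}\Phi)=0$. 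After integrating by parts and passing to the two-scale limit one obtains \eqref{f2}, and the orthogonality to all such $\Phi$ implies $\Theta_0 - A^{1/2}\nabla u \in \overline{\mathscr K}$, the $L^2$-closure of $\{A^{1/2}\nabla_y v: v\in L^2(\o;\honeper(Y_d))\}$ (the orthogonal complement of the divergence-free fields). Assumption (H2) enters first through the vectors $N=\int_{Y_d}A^{1/2}\Phi\,dy$ spanning $\rd$, which is what gives $N\cdot\nabla u\in L^2(\o)$ for all $N$ and hence $u\in H^1(\o)$. Your proposed averaging identity $\bar\sigma(x)=\int_{Y_d}A^{1/2}\sigma_0\,dy=\nabla u$ has no proof and does not appear to follow; and Jensen's inequality is not the right tool at the end — what is needed is precisely the approximation $\Theta_0 = A^{1/2}\nabla u + S$, $S=\lim_n A^{1/2}\nabla_y v_n$, followed by the definition of $A^\ast$ as an infimum, which is how the paper closes the lower bound.

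\emph{Missing boundary-value argument.} You assert ``so in particular $u\in H^1_0(\o)$'' without justification. In the paper this is a separate, nontrivial step: first one shows $u\in H^1(\o)$ (interior), then one retests \eqref{f2} with $\varphi\in C^\infty(\overline\o)$, integrates by parts, and compares with the case $\varphi\in \Cinfcpt(\o)$ to deduce that the boundary term $\int_{\partial\o} N\cdot\nu\,u\,\varphi\,d\mathscr H$ vanishes for all such $\varphi$. Picking a Lebesgue point $x_0\in\partial\o$ and choosing $N=\nu(x_0)$ — which is where (H2) is used a second time — forces $u(x_0)=0$. Without this trace argument your lower bound only shows $u\in H^1(\o)$, which is not enough to conclude $\mathscr F_0(u)\leq\liminf\fe(\ue)$ on all of $L^2(\o)$.

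\emph{Recovery sequence: different route.} For the $\Gamma$-$\limsup$ inequality the paper does not build correctors at all. It replaces $A$ by $A_\delta = A+\delta I_d$ (strongly elliptic), uses the classical compactness for $\Gamma$-convergence in the $L^2$-strong topology, squeezes the (subsequential) $\Gamma$-limit $F^0$ between $\mathscr F^\delta$ and $F^0 + O(\delta)$, passes $\delta\to 0$ using $A^\ast_\delta\to A^\ast$, and then transfers from strong to weak topology on bounded sets. Your proposed oscillating-corrector construction for degenerate $A$ has the usual pitfalls you mention (non-attainment of the cell infimum, piecewise-affine interpolation, diagonal extraction over $\delta$, $n$, $\varepsilon$), none of which are carried out. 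The approach is plausible in principle, but the paper's perturbation argument sidesteps all of these difficulties and is the cleaner proof when $A$ is not uniformly elliptic. If you intend to follow the corrector route, the passage you flag as ``requiring the most care'' is exactly the part that is absent from the proposal.
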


\begin{proof}
 We split the proof into two steps which are an adaptation of \cite[Theorem 3.3]{BF19} using the sole assumptions (H1) and (H2) in the general setting of conductivity.

\smallskip
\noindent\textit{Step 1} - $\Gamma$-$\liminf$ inequality.\\
Consider a sequence $\{\ue \}_\varepsilon$ converging weakly in $L^2(\o)$ to $u\in L^2(\o)$.  We want to prove that 
      \begin{equation}
      \label{Gammaliminf}
      \liminf_{\varepsilon\to 0 } \fe (\ue)\geq \fhom(u).
      \end{equation}
If the lower limit is $\infty$ then \eqref{Gammaliminf} is trivial. Up to a subsequence, still indexed by $\varepsilon$, we may assume that $\displaystyle\liminf\fe(\ue)$ is a limit and we can also assume henceforth that, for some $0<C<\infty$,
        \begin{equation}
        \label{liminf}
        \fe(\ue)\leq C. 
        \end{equation}
As $\ue$ is  bounded in $L^2(\o)$, there exists a subsequence, still indexed by $\varepsilon$, which two-scale converges to a function $u_0(x,y)\in L^2(\o\times Y_d)$ (see \textit{e.g.} \cite[Theorem 1.2]{A92}). In other words,
    \begin{equation}
    \label{twoscaue}
    \ue \leftharpoonupeq u_0.
    \end{equation}
Assumption (H1) ensures that 
   \begin{equation}
   \label{uoindy}
   u_0(x,y) = u(x) \quad\mbox{is independent of} \hspace{0.15cm}y,
   \end{equation}
where, according to the link between two-scale and weak $L^2(\o)$-convergences (see \cite[Proposition 1.6]{A92}),  $u$ is the weak limit of $\ue$, \textit{i.e.}
     \begin{linenomath} 
    \begin{equation*}
    \ue\rightharpoonup u \quad\mbox{weakly in}\hspace{0.1cm} L^2(\o).
    \end{equation*} 
    \end{linenomath}
Since all the components of the matrix $A(y)$ are bounded and  $A(y)$ is non-negative as a quadratic form, in view of \eqref{liminf}, for another subsequence (not relabeled), we have
     \begin{linenomath}  
     \begin{equation*}
     A\l(\xe\r)\grad\ue \leftharpoonupeq \sigma_0(x,y) \qquad\text{with}\hspace{0.2cm} \sigma_0\in L^2(\o\times Y_d; \rd) ,
     \end{equation*}
     \end{linenomath}
and also 
       \begin{equation}
       \label{twoscaledivfree}
       A^{1/2}\l(\xe\r)\grad\ue \leftharpoonupeq \Theta_0(x,y) \qquad\text{with}\hspace{0.2cm} \Theta_0\in L^2(\o\times Y_d; \rd).
      \end{equation}
In particular 
       \begin{equation}
       \label{twoscale0}
       \varepsilon A\l(\xe\r)\grad\ue \leftharpoonupeq 0.
       \end{equation}
Consider $\Phi\in\ltwoper(Y_d; \rd)$ such that 
      \begin{equation}
      \label{divzero}
      \div\l(A^{1/2}(y)\Phi(y)\r) = 0 \qquad\mbox{in}\hspace{0.2cm}\mathscr{D}'(\R^d),
      \end{equation}
or equivalently, 
         \begin{linenomath}
         \begin{equation}
         \notag
         \int_{Y_d} A^{1/2}(y)\Phi(y)\cdot\grad\psi(y)dy=0 \qquad\forall \psi\in\honeper(Y_d).
         \end{equation}
         \end{linenomath}
Take also $\varphi\in C^{\infty}(\overline{\o})$. Since $\ue\in H^1_0(\o)$ and in view of \eqref{divzero}, an integration by parts yields 
      \begin{linenomath}
      \begin{align*}
      \int_{\o} A^{1/2}\l(\xe\r)&\grad\ue\cdot\Phi\l(\xe\r)\varphi(x)dx = 
       - \int_{\o}\ue A^{1/2}\l(\xe\r)\Phi\l(\xe\r)\cdot\grad\varphi(x)dx.
      \end{align*}
      \end{linenomath}
By using \cite[Lemma 5.7]{A92},  $A^{1/2}(y)\Phi(y)\cdot\grad \varphi(x)$ is an admissible test function for the two-scale convergence. Then, we can pass to the two-scale limit in the previous expression with the help of the convergences 
\eqref{twoscaue} and \eqref{twoscaledivfree} along with \eqref{uoindy}, and we obtain
      \begin{equation}
      \label{f2}
      \int_{\o\times Y_d} \Theta_0(x,y)\cdot\Phi(y)\varphi(x)dxdy = -\int_{\o\times Y_d} u(x)A^{1/2}(y)\Phi(y)\cdot\grad\varphi(x)dxdy. 
      \end{equation}
We prove that the target function $u$ is in $H^1(\Omega)$. Setting
     \begin{equation}
      \label{N}
      N:=\int_{Y_d} A^{1/2}(y)\Phi(y)dy,
     \end{equation}
and varying $\varphi$ in $\Cinfcpt(\o)$, the equality \eqref{f2} reads as 
      \begin{linenomath}
      \begin{equation*}
      \label{formulaN}
      \int_{\o\times Y_d} \Theta_0(x,y)\cdot \Phi(y)\varphi(x)dxdy =-\int_{\o} u(x)N\cdot \grad\varphi(x)dx
      \end{equation*}
      \end{linenomath}      
Since the integral in the left-hand side is bounded by a constant times $\displaystyle\|\varphi\|_{L^2(\o)}$, the right-hand side is a linear and continuous map in $\displaystyle\varphi\in L^2(\o)$. By the Riesz representation theorem, there exists $g\in L^2(\o)$ such that, for any $\varphi\in\Cinfcpt(\o)$,
        \begin{linenomath}
        \begin{equation*}
        \int_{\o} u(x)N\cdot \grad\varphi(x)dx  = \int_{\o} g(x)\varphi(x)dx,
        \end{equation*}
        \end{linenomath}
which implies that
      \begin{equation}
       \label{f4}
       N\cdot\grad u\in L^2(\o).
       \end{equation} 
In view of assumption (H2), $N$ is an arbitrary vector in $\rd$ so that we infer from \eqref{f4} that 
    \begin{equation}
    \label{uH1}
    u\in H^1(\o).
    \end{equation}
This combined with equality  \eqref{f2} leads us to 
    \begin{equation}
    \label{f3}
    \int_{\o\times Y_d} \Theta_0(x,y)\cdot\Phi(y)\varphi(x)dxdy = \int_{\o\times Y_d}A^{1/2}(y)\grad 
    u(x)\cdot\Phi(y)\varphi(x)dxdy. 
    \end{equation}
By density, the last equality holds if the test functions $\Phi(y)\varphi(x)$ are replaced by the set of $\psi(x,y)\in L^2(\o; \ltwoper(Y_d;\rd))$ such that 
          \begin{linenomath}
          \begin{equation*}
          \div_y\l(A^{1/2}(y)\psi(x,y)\r)=0 \qquad\mbox{in}\hspace{0.2cm}\mathscr{D}'(\R^d),
          \end{equation*} 
          \end{linenomath} 
or equivalently,
      \begin{linenomath}
     \begin{equation*}
     \int_{\o\times Y_d}\psi(x,y)\cdot A^{1/2}(y)\grad_yv(x,y)dxdy=0 \qquad\forall v\in L^2(\o;\honeper(Y_d)).
     \end{equation*}
     \end{linenomath}
The $L^2(\o; \ltwoper(Y_d;\rd))$-orthogonal to that set is the $L^2$-closure of 
      \begin{linenomath}
      \begin{equation*}
      \mathscr{K} :=\l\{A^{1/2}(y)\grad_yv(x,y)\hspace{0.02cm}:\hspace{0.02cm} v\in L^2(\o; \honeper(Y_d))\r\}.
      \end{equation*}
      \end{linenomath}
Thus, the equality \eqref{f3} yields
        \begin{linenomath}
        \begin{equation*}
        \Theta_0(x,y) = A^{1/2}(y)\grad u(x)+S(x,y)
        \end{equation*} 
        \end{linenomath}
for some $S$ in the closure of $\mathscr{K}$, \textit{i.e.} there exists a sequence $v_n\in L^2(\o;\honeper(Y_d))$ such that 
      \begin{linenomath}
      \begin{equation*}
      A^{1/2}(y)\grad_yv_n(x,y) \to S(x,y)\qquad\text{strongly in}\quad L^2(\o;\ltwoper(Y_d;\rd)).
      \end{equation*}
      \end{linenomath}
\par Due to the lower semi-continuity property of two-scale convergence (see \cite[Proposition 1.6]{A92}), we get 
       \begin{linenomath}
       \begin{align*}
       \liminf_{\varepsilon\to 0} \|A^{1/2}(x/\varepsilon)\grad\ue\|^2_{L^2(\o;\rd)}&\geq \|\Theta_0\|^2_{L^2(\o\times Y_d;\rd)}\\
       &=\lim_n\l\|A^{1/2}(y)\l(\grad_xu(x)+\grad_yv_n\r)\r\|^2_{L^2(\o\times Y_d;\rd)}.
       \end{align*}
       \end{linenomath}
Then, by the weak $L^2$-lower semi-continuity of $\|\ue\|_{L^2(\o)}$, we have 
      \begin{linenomath}
      \begin{align*}
      \liminf_{\varepsilon\to 0}\fe(\ue)&\geq \lim_{n}\int_{\o\times Y_d}A(y)(\grad_xu(x)+\grad_yv_n(x,y))\cdot(\grad_xu(x)+\grad_yv_n(x,y))dxdy\\
      &\quad + \int_{\o}|u|^2dx\\
      &\geq\int_{\o}\inf\l\{\int_{ Y_d}A(y)(\grad_xu(x)+\grad_yv(y))\cdot(\grad_xu(x)+\grad_yv(y))dy\hspace{0.02cm}:\hspace{0.02cm}v\in \honeper(Y_d) \r\} dx\\
      &\quad + \int_{\o}|u|^2dx.
      \end{align*}
      \end{linenomath}
Recalling the definition \eqref{hommat},
we immediately conclude that
       \begin{linenomath}
       \begin{equation*}
        \liminf_{\varepsilon\to 0}\fe(\ue)\geq \int_{\o}\l\{A^\ast\grad u\cdot\grad u + |u|^2\r\}dx,
       \end{equation*}
       \end{linenomath}
provided that $u\in H^1_0(\o)$.
\par It remains to prove that  the target function $u$ is actually in $H^1_0(\o)$, giving a complete characterization of $\Gamma$-limit. To this end, take $x_0\in\partial\o$ a Lebesgue point for $\displaystyle u\lfloor\partial\o$ and for $\nu (x_0)$, the exterior normal to $\o$ at point $x_0$.  Thanks to \eqref{uH1}, we know that $u\in H^1(\o)$, hence, after an integration by parts of the right-hand side of \eqref{f2},  we obtain, for $\varphi\in C^\infty(\overline{\o})$,
    \begin{linenomath}
    \begin{align}
    \label{uH10}
    \int_{\o\times Y_d}\Theta_0(x,y)\cdot\Phi(y)\varphi(x)dxdy = \int_{\o}N\cdot\grad u(x)\varphi(x)dx - \int_{\partial\o}N\cdot \nu(x)u(x)\varphi(x)d\mathscr{H},
    \end{align}
    \end{linenomath}
where $N$ is given by \eqref{N}. Varying $\varphi$  in $\Cinfcpt(\o)$, the first two integrals in \eqref{uH10} are equal and  bounded by a constant times $\|\varphi\|_{L^2(\o)}$. 
 It follows that,  for any $\varphi\in C^\infty(\overline{\o})$,
    \begin{linenomath}
    \begin{equation*}
    \int_{\partial\o}N\cdot \nu(x)u(x)\varphi(x)d\mathscr{H} =0,
    \end{equation*}
    \end{linenomath}
which leads to $N\cdot \nu(x) u(x)=0$ $\mathscr{H}$-a.e. on $\partial\o$. 
 Since $x_0$ is a Lebesgue point, we have
      \begin{linenomath}
      \begin{equation}
      \label{finform}
      N\cdot \nu(x_0) u(x_0)=0.
      \end{equation}
      \end{linenomath}
In view of assumption (H2) and the arbitrariness of $N$, we can choose $N$ such that $N=\nu(x_0)$ so that from \eqref{finform} we get $u(x_0)=0$. Hence, 
     \begin{linenomath}
     \begin{equation*}
     u\in H^1_0(\o).
     \end{equation*}
     \end{linenomath}
This concludes the proof of the $\Gamma$-$\liminf$ inequality.

\smallskip
\noindent\textit{Step 2} - $\Gamma$-$\limsup$ inequality.\\
We use the same arguments of \cite[Theorem 2.4]{BPM17} which can easily extend to the conductivity setting. We just give an idea of the proof, which is based on a perturbation argument. 
For $\delta>0$, let $A_\delta$ be the perturbed matrix of $\R^{d\times d}$ defined by 
     \begin{linenomath}
     \begin{equation*}
     A_\delta:=A+\delta I_d,
     \end{equation*}
     \end{linenomath}
where $I_d$ is the unit matrix of $\R^{d\times d}$. Since the matrix $A$ is non-negative, $A_\delta$ turns out to be positive definite, hence, the  functional $\fe^\delta$, defined by \eqref{funct} with $A_\delta$ in place of $A$,
$\Gamma$-converges to the functional $\mathscr{F}^\delta$ given by
      \begin{linenomath}
      \begin{equation*}
      \mathscr{F}^\delta(u) := \begin{dcases}
      \int_{\o} \l\{A^\ast_\delta\grad u\cdot\grad u + |u|^2\r\}dx,&\mbox{if}\hspace{0.3cm} u\in H^1_0(\o),\\
      \hspace{2cm}\infty, &\mbox{if}\hspace{0.3cm} u\in L^2(\o)\setminus H^1_0(\o),
      \end{dcases}
      \end{equation*}
      \end{linenomath}
for the strong topology of $L^2(\o)$ (see \textit{e.g.} \cite[Corollary 24.5]{DM93}).
Thanks to the compactness result of $\Gamma$-convergence (see \textit{e.g.} \cite[Proposition 1.42]{Brai02}), there exists a subsequence $\varepsilon_j$ such that $\mathscr{F}_{\varepsilon_j}$ $\Gamma$-converges for the $L^2(\o)$-strong topology to some functional $F^0$. Let $u\in H^1_0(\o)$ and let $u_{\varepsilon_j}$ be a recovery sequence for $\mathscr{F}_{\varepsilon_j}$ which  converges to $u$ for the $H^1(\o)$-weak topology on bounded sets. Since $\displaystyle\mathscr{F}_{\varepsilon_j}\leq \mathscr{F}_{\varepsilon_j}^\delta$ and since $u_{\varepsilon_j}$ belongs to some bounded set of $H^1(\Omega)$, from \cite[Propositions 6.7 and 8.10]{DM93}  we deduce that
    \begin{linenomath}
    \begin{align*}
    F^0(u)&\leq \mathscr{F}^\delta(u)\\
    &\leq \liminf_{\varepsilon_j\to 0}  \int_{\o}\l\{A_\delta\l(\frac{x}{\varepsilon_j}\r)\grad u_{\varepsilon_j}\cdot \grad u_{\varepsilon_j} +|u_{\varepsilon_j}|^2\r\}dx\\
    &\leq \liminf_{\varepsilon_j\to 0}  \int_{\o}\l\{A\l(\frac{x}{\varepsilon_j}\r)\grad u_{\varepsilon_j}\cdot \grad u_{\varepsilon_j} +|u_{\varepsilon_j}|^2\r\}dx + O(\delta)\\
    &= F^0(u) + O(\delta).
    \end{align*}
    \end{linenomath}
It follows that $\mathscr{F}^\delta$ converges to $F^0$ as $\delta\to 0$. Then, the $\Gamma$-limit $F^0$ of $\mathscr{F}_{\varepsilon_j}$ is independent on the subsequence $\varepsilon_j$. Repeating the same arguments, any subsequence of $\fe$ has a further subsequence which $\Gamma$-converges for the strong topology of $L^2(\o)$ to $F^0=\lim_{\delta\to 0}\mathscr{F}^\delta$. Thanks to the Urysohn property  (see \textit{e.g.} \cite[Proposition 1.44]{Brai02}),  the whole sequence $\fe$ $\Gamma$-converges to the functional $F^0$ for the strong topology of $L^2(\o)$. On the other hand, in light of the definition \eqref{hommat} of $A^\ast$, we get that $A^\ast_\delta$ converges to $A^\ast$ as $\delta\to 0$, \textit{i.e.}
    \begin{equation}
    \label{AdeltatendA}
    \lim_{\delta\to 0}A^\ast_\delta=A^\ast.
    \end{equation}
Thanks to the Lebesgue dominated convergence theorem and in view of \eqref{AdeltatendA}, we get that $F^0 = \lim_{\delta\to 0}\mathscr{F}^\delta$ is exactly $\fhom$ given by \eqref{homfunc}. Therefore, $\fe$ $\Gamma$-converges to $\fhom$ for the $L^2(\o)$-strong topology. 
\par Now, let us show that $\fe$ $\Gamma$-converges to $\fhom$ for the weak topology of $L^2(\o)$. Recall that the $L^2(\o)$-weak topology is metrizable on the closed ball of $L^2(\o)$.  Fix $n\in\mathbb{N}$ and let $d_{B_n}$ be any metric inducing the $L^2(\o)$-weak topology on the ball $B_n$ centered on $0$ and of radius $n$. Let $u\in H^1_0(\o)$ and let $\overline{u}_{\varepsilon}$ be a recovery sequence for $\fe$ for the $L^2(\o)$-strong topology. Since the topology induced  by the metric $d_{B_n}$ on $B_n$ is weaker than the $L^2(\o)$-strong topology,  $\overline{u}_{\varepsilon}$ is also a recovery sequence for $\fe$ for the $L^2(\o)$-weak topology on $B_n$. Hence,
    \begin{linenomath} 
    \begin{equation*}
    \lim_{\varepsilon\to 0}\fe(\overline{u}_{\varepsilon}) =\fhom(u),
    \end{equation*}
    \end{linenomath}  
which proves the $\Gamma$-$\limsup$ inequality in $B_n$. Finally, since any sequence converging weakly in $L^2(\o)$ belongs to some ball $B_n\subset L^2(\o)$, as well as its limit, it follows that the $\Gamma$-$\limsup$ inequality holds true for $\fe$ for $L^2(\o)$-weak topology, which concludes the proof.
\end{proof}

The next proposition provides a characterization of Assumption {\rm (H2)} in terms of homogenized matrix $A^\ast$.
    \begin{prop}
    \label{prop:positivedefinite}
    Assumption {\rm (H2)} is equivalent to the positive definiteness of $A^\ast$, or equivalently,
                \begin{equation}
                \notag
                \ker(A^\ast)=V^\perp.
                \end{equation}
    \end{prop}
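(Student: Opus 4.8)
The plan is to prove the two equivalences in turn: first that (H2), i.e. $V = \mathbb{R}^d$, is equivalent to $A^\ast$ being positive definite, and second (really the same computation packaged differently) that $\ker(A^\ast) = V^\perp$ always holds, so that $V = \mathbb{R}^d \iff V^\perp = \{0\} \iff \ker(A^\ast) = \{0\}$. The key is to obtain a workable description of $\ker(A^\ast)$. Observe that since $A^\ast$ is symmetric non-negative, $\lambda \in \ker(A^\ast)$ iff $A^\ast \lambda \cdot \lambda = 0$, i.e. iff the infimum in \eqref{hommat} is zero; since $A = (A^{1/2})^2$, the integrand is $|A^{1/2}(y)(\lambda + \nabla v(y))|^2$, so $A^\ast\lambda\cdot\lambda = 0$ iff there is a sequence $v_n \in \honeper(Y_d)$ with $A^{1/2}(y)(\lambda + \nabla v_n(y)) \to 0$ strongly in $L^2_{\text{per}}(Y_d;\mathbb{R}^d)$. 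Thus $\ker(A^\ast) = \{\lambda \in \mathbb{R}^d : A^{1/2}\lambda \in \overline{\{A^{1/2}\nabla v : v \in \honeper(Y_d)\}}^{L^2}\}$, where the closure is the same set $\overline{\mathscr{K}}$ (with the $x$-dependence dropped) that already appeared in Step 1 of the proof of Theorem \ref{genthm}.

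The next step is to identify the orthogonal complement of $\overline{\mathscr{K}}$ in $L^2_{\text{per}}(Y_d;\mathbb{R}^d)$. Exactly as noted in the proof of Theorem \ref{genthm}, a field $\Phi \in L^2_{\text{per}}(Y_d;\mathbb{R}^d)$ is $L^2$-orthogonal to $\mathscr{K}$ iff $\int_{Y_d} A^{1/2}(y)\Phi(y)\cdot\nabla v(y)\,dy = 0$ for all $v \in \honeper(Y_d)$, i.e. iff $\div(A^{1/2}(y)\Phi(y)) = 0$ in $\mathscr{D}'(\mathbb{R}^d)$ — precisely the constraint defining $V$ in \eqref{setN}. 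So $\mathscr{K}^\perp = \{\Phi : \div(A^{1/2}\Phi) = 0\}$ and hence $L^2_{\text{per}}(Y_d;\mathbb{R}^d) = \overline{\mathscr{K}} \oplus \mathscr{K}^\perp$. Now I compute: $\lambda \in \ker(A^\ast)$ iff $A^{1/2}\lambda \in \overline{\mathscr{K}}$ iff $A^{1/2}\lambda \perp \mathscr{K}^\perp$, i.e. $\int_{Y_d} A^{1/2}(y)\lambda\cdot\Phi(y)\,dy = 0$ for every $\Phi$ with $\div(A^{1/2}\Phi)=0$. Since $A^{1/2}$ is symmetric, $\int_{Y_d} A^{1/2}(y)\lambda\cdot\Phi(y)\,dy = \lambda \cdot \int_{Y_d} A^{1/2}(y)\Phi(y)\,dy$, and as $\Phi$ ranges over all admissible fields, $\int_{Y_d} A^{1/2}\Phi\,dy$ ranges over exactly $V$. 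Therefore $\lambda \in \ker(A^\ast)$ iff $\lambda \perp V$, that is $\ker(A^\ast) = V^\perp$. (Here $V$ is a linear subspace of $\mathbb{R}^d$ — it is the image of a linear map restricted to a linear subspace of $L^2_{\text{per}}$ — so $V^\perp$ makes sense and $V^{\perp\perp} = V$.)

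From $\ker(A^\ast) = V^\perp$ the proposition follows immediately: $A^\ast$ is positive definite iff $\ker(A^\ast) = \{0\}$ iff $V^\perp = \{0\}$ iff $V = \mathbb{R}^d$, which is (H2). The main technical point to get right is the justification that $A^\ast\lambda\cdot\lambda = 0$ forces an approximating sequence $v_n$ with $A^{1/2}(\lambda + \nabla v_n) \to 0$ in $L^2$ — i.e. the passage from "infimum equals zero" to "element of the $L^2$-closure of $\mathscr{K}$" — and the symmetric dual statement that orthogonality of $A^{1/2}\lambda$ to $\mathscr{K}^\perp$ is equivalent to membership in $\overline{\mathscr{K}}$; both are routine Hilbert-space facts (a closed subspace equals the orthogonal complement of its orthogonal complement) but should be stated cleanly. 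I also want to remark that this argument makes transparent why (H2) is the right hypothesis for Theorem \ref{genthm}: it is exactly what is needed so that the quadratic form $A^\ast$ appearing in $\fhom$ is coercive on $H^1_0(\o)$.
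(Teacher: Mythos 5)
Your proof is correct, and for one of the two inclusions it takes a genuinely cleaner route than the paper. For $\ker(A^\ast)\subseteq V^\perp$, both you and the paper start from a minimizing sequence $v_n$ with $A^{1/2}(\lambda+\nabla v_n)\to 0$ strongly in $L^2_{\mathrm{per}}(Y_d;\R^d)$ and test against a divergence-free field $A^{1/2}\Phi$; that part is essentially identical. The difference is the reverse inclusion $V^\perp\subseteq\ker(A^\ast)$. The paper establishes it by a perturbation argument: it introduces $A_\delta=A+\delta I_d$, takes the minimizer $\overline u_\delta$ of the $A_\delta$ cell problem, shows the fluxes $\Phi_\delta:=A_\delta^{1/2}\nabla\overline u_\delta$ are bounded in $L^2_{\mathrm{per}}$, extracts a weak limit $\Phi$, proves $A_\delta^{1/2}\to A^{1/2}$ strongly in $L^\infty_{\mathrm{per}}$ via the spectral decomposition $A=RDR^T$, passes to the limit in $\div(A_\delta^{1/2}\Phi_\delta)=0$, and concludes $A^\ast\lambda=\int_{Y_d}A^{1/2}\Phi\,dy\in V$, hence $A^\ast\lambda\cdot\lambda=0$ for $\lambda\in V^\perp$. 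You bypass all of that with one Hilbert-space fact: $\overline{\mathscr K}=(\mathscr K^\perp)^\perp$, so $A^{1/2}\lambda\in\overline{\mathscr K}$ (which is exactly $\lambda\in\ker A^\ast$) if and only if $A^{1/2}\lambda$ annihilates $\mathscr K^\perp=\{\Phi:\div(A^{1/2}\Phi)=0\}$, and unwinding the pairing gives $\lambda\perp V$ directly. This collapses the two inclusions into one biconditional and avoids entirely the $A_\delta^{1/2}$ convergence estimates. The only thing the paper's longer argument buys is perhaps expository: it reuses the $A_\delta$ machinery already set up for the $\Gamma$-$\limsup$ step of Theorem~\ref{genthm} and exhibits a concrete admissible $\Phi$ realizing $A^\ast\lambda\in V$. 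Note also that your identity $\ker(A^\ast)=V^\perp$ yields the (slightly stronger-looking) statement $\mathrm{Range}(A^\ast)=V$ for free, since $A^\ast$ is symmetric and $V$ is a subspace of $\R^d$. One small point worth making explicit in a final write-up: you should say why $V$ is a linear subspace (it is the image of the subspace $\mathscr K^\perp$ of $L^2_{\mathrm{per}}(Y_d;\R^d)$ under the linear map $\Phi\mapsto\int_{Y_d}A^{1/2}\Phi\,dy$), since $V^{\perp\perp}=V$ is used implicitly; you do flag this in a parenthetical, which is adequate.
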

\begin{proof}
Consider $\displaystyle \lambda\in \ker(A^\ast)$.
Define 
   \begin{linenomath}
   \begin{equation*}
   H^1_\lambda(Y_d) := \l\{u\in H^1_{\rm loc}(\rd)\hspace{0.03cm}:\hspace{0.03cm} \grad u \hspace{0,2cm}\mbox{is} \hspace{0.2cm}Y_d\mbox{-periodic and}\hspace{0.2cm} \int_{Y_d}\grad u(y)dy=\lambda       \r\}.
   \end{equation*}
   \end{linenomath}
Recall that $u\in H^1_\lambda(Y_d)$ if and only if there exists $v\in\honeper(Y_d)$ such that $u(y)= v(y) + \lambda\cdot y $ (see \textit{e.g.} \cite[Lemma 25.2]{DM93}). Since $A^\ast$ is non-negative and symmetric, from  \eqref{hommat} it follows that 
  \begin{linenomath}
  \begin{align*}
  0=A^\ast\lambda\cdot\lambda 
  &=\inf\l\{ \int_{ Y_d}A(y)\grad u(y)\cdot\grad u(y)dy\hspace{0.02cm}:\hspace{0.02cm}u\in H^1_\lambda(Y_d)   \r\}.\label{newhommat}
  \end{align*}
  \end{linenomath}
Then, there exists a sequence $u_n$ of functions in $H^1_\lambda(Y_d)$ such that 
    \begin{linenomath}
    \begin{equation*}
    \lim_{n\to\infty}\int_{Y_d} A(y)\grad u_n(y)\cdot\grad u_n(y) dy = 0,
    \end{equation*}
    \end{linenomath}
which implies that 
    \begin{linenomath}
    \begin{equation}
    \label{Astrongconver}
    A^{1/2}\grad u_n \to 0\qquad \mbox{strongly in}\hspace{0.2cm} L^2(Y_d; \hspace{0.02cm} \R^d).
    \end{equation}
    \end{linenomath}
Now, take $\Phi\in L^2_{\rm per}(Y_d; \hspace{0.02cm} \R^d)$ such that $A^{1/2}\Phi$ is a divergence free field in $\R^d$.   Recall that, since $u_n\in H^1_\lambda(Y_d)$, we have that $\grad u_n(y) = \grad v_n(y) + \lambda$, for some $v_n\in H^1_{\rm per} (Y_d)$. This implies that  
    \begin{linenomath}
    \begin{align}
    \int_{Y_d} A^{1/2}(y)\grad u_n(y)\cdot\Phi(y)dy &= \int_{Y_d}\grad u_n(y)\cdot A^{1/2}(y)\Phi(y)dy\notag \\
    & =\lambda\cdot \int_{Y_d} A^{1/2}(y)\Phi(y)dy + \int_{Y_d}\grad v_n(y)\cdot A^{1/2}(y)\Phi(y)dy\notag \\  
    &  =\lambda\cdot \int_{Y_d} A^{1/2}(y)\Phi(y)dy, \label{perdiv-curllemma}
    \end{align}
    \end{linenomath}
where  the last equality is obtained by integrating by parts the second integral combined with the fact that $A^{1/2}\Phi$ is a divergence free field in $\R^d$. 
In view of convergence \eqref{Astrongconver}, the integral on the left-hand side of \eqref{perdiv-curllemma} converges to $0$. Hence, passing to the limit as $n\to\infty$ in \eqref{perdiv-curllemma} yields 
     \begin{linenomath}
     \begin{equation*}
     \label{lambadaperp}
     0=\lambda\cdot \l(\int_{Y_d} A^{1/2}(y)\Phi(y)dy\r),
     \end{equation*}
     \end{linenomath}
for any $\Phi\in L^2_{\rm per}(Y_d; \hspace{0.02cm} \R^d)$ such that $A^{1/2}\Phi$ is a divergence free field in $\R^d$. Therefore $\lambda\in V^\perp$ which implies that
      \begin{linenomath}
      \begin{equation*}
      \label{kersubsetV}
      \ker(A^\ast) \subseteq V^\perp.
      \end{equation*}
      \end{linenomath}
\par Conversely, by \eqref{AdeltatendA} we already know that 
   \begin{linenomath} 
   \begin{equation*}
   \lim_{\delta\to 0} A^\ast_\delta=A^\ast,
   \end{equation*}
   \end{linenomath} 
where $A^\ast_\delta$ is the homogenized matrix associated with $A_\delta = A+\delta I_d$. Since $A_\delta$ is strongly elliptic, the homogenized matrix $A^\ast_\delta$ is given by
    \begin{linenomath}
   \begin{equation}
   \label{minprob}
   A^\ast_\delta \lambda\cdot\lambda = \min\l\{ \int_{Y_d} A_\delta(y)\grad u_\delta(y)\cdot \grad u_\delta(y) dy \hspace{0.02cm}:\hspace{0.02cm} u_\delta\in H^1_\lambda (Y_d)  \r\}.
   \end{equation}
   \end{linenomath}  
Let $\overline{u}_\delta$ be the minimizer  of problem \eqref{minprob}. Therefore, there exists a constant $C>0$ such that 
    \begin{linenomath} 
    \begin{equation*}
    A^\ast_\delta\lambda\cdot\lambda = \int_{Y_d} A_\delta(y)\grad \overline{u}_\delta(y)\cdot \grad \overline{u}_\delta(y) dy= \int_{Y_d}|A^{1/2}_\delta(y)\grad \overline{u}_\delta(y)|^2dy\leq C,
    \end{equation*}
    \end{linenomath} 
which implies that the sequence $\Phi_\delta(y):= A^{1/2}_\delta(y)\grad \overline{u}_\delta(y)$ is bounded in $L^2_{\rm per}(Y_d; \R^d)$. Then,  up to extract a subsequence, we can assume that $\Phi_\delta$ converges weakly to some $\Phi$ in $L^2_{\rm per} (Y_d; \R^d)$.
\par Now, we show that  $A^{1/2}_\delta$ converges strongly to $A^{1/2}$ in $L^\infty_{\rm per}(Y_d)^{d\times d}$.  Since $A(y)$ is a symmetric matrix, there exists an orthogonal matrix-valued function $R$ in $L^\infty_{\rm per}(Y_d)^{d\times d}$ such that
   \begin{equation}
   \notag
   A(y)=R(y)D(y)R^T(y) \qquad\mbox{for a.e. }y\in Y_d,  
   \end{equation}
where $D$ is a diagonal non-negative matrix-valued function in $L^\infty_{\rm per}(Y_d)^{d\times d}$  and $R^T$ denotes the transpose of $R$.  It follows that  $A_\delta (y) = A(y)+\delta I_d = R(y)(D(y)+\delta I_d)R^T(y)$, for a.e. $y\in Y_d$. Hence,
   \begin{equation}
    A^{1/2}_\delta (y) = R(y)(D(y)+\delta I_d)^{1/2}R^T(y)\qquad \mbox{for a.e. }y\in Y_d,\notag
   \end{equation}
which implies that $A^{1/2}_\delta$ converges strongly to $A^{1/2} = RD^{1/2}R^T$  in $L^\infty_{\rm per} (Y_d)^{d\times d}$.

\par Now, passing to the limit as $\delta\to 0$ in 
    \begin{linenomath} 
    \begin{equation*}
    \div(A_\delta^{1/2}\Phi_\delta) = \div (A_\delta\grad \overline{u}_\delta) =0 \qquad\mbox{in } \mathscr{D'}(\R^d),
    \end{equation*}
    \end{linenomath} 
we have
    \begin{linenomath} 
    \begin{equation*}
    \div (A^{1/2}\Phi)=0 \qquad\mbox{in } \mathscr{D'}(\R^d).
    \end{equation*}
    \end{linenomath} 
This along with $\Phi\in L^2_{\rm per}(Y_d; \R^d)$ implies that $\Phi$ is a test function for the set $V$ given by \eqref{setN}.  From \eqref{minprob} it follows that
    \begin{linenomath} 
    \begin{equation*}
    A^\ast_\delta\lambda=\int_{Y_d} A_\delta(y)\grad \overline{u}_\delta(y)dy = \int_{Y_d} A^{1/2}_\delta(y) \Phi_\delta(y)dy.
    \end{equation*}
    \end{linenomath} 
Hence, taking into account the strong convergence of $A^{1/2}_\delta$ in $L^\infty_{\rm per}(Y_d)^{d\times d}$ and the weak convergence of $\Phi_\delta$ in $L^2_{\rm per}(Y_d; \R^d)$, we have 
    \begin{linenomath} 
    \begin{equation*}
    A^\ast\lambda =\lim_{\delta\to 0} A^\ast_\delta \lambda =\lim_{\delta\to 0}
   \int_{Y_d} A^{1/2}_\delta(y) \Phi_\delta(y)dy=\int_{Y_d} A^{1/2}(y)\Phi(y)dy,
    \end{equation*}
    \end{linenomath} 
which implies that $A^\ast\lambda\in V$ since $\Phi$ is a suitable test function for the set $V$. Therefore, for $\lambda\in V^\perp$,
   \begin{linenomath} 
    \begin{equation*}
    A^\ast\lambda\cdot\lambda=0,
    \end{equation*}
    \end{linenomath} 
so that, since $A^\ast$ is a non-negative matrix, we deduce that $\lambda\in\ker(A^\ast)$. In other words, 
    \begin{linenomath} 
    \begin{equation*}
    V^\perp\subseteq \ker(A^\ast),
    \end{equation*}
    \end{linenomath} 
which concludes the proof.
\end{proof}

\section{Two-dimensional and three-dimensional examples}
In this section we provide a geometric setting for which assumptions (H1) and (H2) are fulfilled. We focus on a $1$-periodic rank-one laminates of direction $e_1$ with two phases in $\R^d$, $d=2,3$. Specifically, we assume the existence of two anisotropic phases  $Z_1$ and $Z_2$ of $Y_d$ given by
      \begin{linenomath}
      \begin{equation*}
      Z_1 = (0,\theta)\times (0,1)^{d-1} \quad\mbox{and}\quad Z_2 = (\theta,1)\times (0,1)^{d-1},
      \end{equation*} 
      \end{linenomath}
where $\theta$ denotes the volume fraction of the phase  $Z_1$.
Let $Z_1^\#$ and $Z_2^\#$ be the associated subsets of $\rd$, \textit{i.e.} the open periodic sets
    \begin{linenomath}
    \begin{equation*}
    Z_i^\# := \text{Int}\l(\bigcup_{k\in\mathbb{Z}^d} \l(\overline{Z_i}+k\r)\r) \qquad\mbox{for} \hspace{0.1cm} i =1,2.
    \end{equation*}
    \end{linenomath}
Let $X_1$ and $X_2$ be unbounded connected components of $Z_1^\#$ and $Z_2^\#$ in $\rd$ given by
    \begin{linenomath}
    \begin{equation*}
    X_1 := (0,\theta)\times \R^{d-1} \quad\mbox{and}\quad 
    X_2 := (\theta,1)\times \R^{d-1},
    \end{equation*}
    \end{linenomath}
and we denote by $\partial Z$ the interface $\{y_1=0\}$.   
\par The anisotropic phases are described by two constant, symmetric and non-negative matrices $A_1$ and $A_2$ of $\R^{d\times d}$ which are possibly not positive definite. Hence,  the conductivity matrix-valued function $A\in L^\infty_{\text{per}}(Y_d)^{d\times d}$, given by
      \begin{equation}
      \label{matrixlam}
      A(y_1) := \chi(y_1)A_1 + (1-\chi(y_1))A_2 \qquad\mbox{for}\hspace{0.1cm} y_1\in\R,
      \end{equation}
where $\chi$ is the $1$-periodic characteristic function of the phase $Z_1$, is  not strongly elliptic, \textit{i.e.} \eqref{notstronell} is satisfied.

\subsection{The two-dimensional case with one degenerate phase}
We are interested in two-phase mixtures in $\rtwo$ with one degenerate phase. We specialize to the case where the non-negative and symmetric matrices $A_1$ and $A_2$ of $\R^{2\times 2}$ are such that
     \begin{equation}
     \label{phasesdimtwo}
    A_1=\xi\otimes\xi\qquad\mbox{and}\quad A_2 \hspace{0.1cm}\mbox{\rm is positive definite},
     \end{equation}
for some $\xi\in\rtwo$. The next proposition establishes the algebraic conditions which provide assumptions (H1) and (H2) of Theorem \ref{genthm}.
    \begin{prop}
    \label{propd2}
    Let $A_1$ and $A_2$ be the matrices defined by \eqref{phasesdimtwo}. 
    Assume that $\xi\cdot e_1\neq 0$ and the vectors $ \xi$ and $A_2e_1$ are linearly independent in $\rtwo$.
    Then,  assumptions {\rm (H1)} and {\rm (H2)} are satisfied. In particular,  the homogenized matrix $A^\ast$, given by \eqref{hommat},  associated to the matrix $A$ defined by \eqref{matrixlam} and \eqref{phasesdimtwo} is positive definite.
    \end{prop}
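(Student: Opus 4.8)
The plan is to verify the two structural hypotheses {\rm(H1)} and {\rm(H2)} of Theorem~\ref{genthm}; once these hold, Theorem~\ref{genthm} applies and the positive definiteness of $A^\ast$ is exactly the content of Proposition~\ref{prop:positivedefinite}. Throughout write $y=(y_1,y_2)$, let $\chi$ denote the $1$‑periodic characteristic function of the first phase, put $\xi_1:=\xi\cdot e_1\neq 0$ and $\xi_2:=\xi\cdot e_2$, and note that for $A$ as in \eqref{matrixlam}--\eqref{phasesdimtwo} one has $A_1^{1/2}=|\xi|^{-1}\,\xi\otimes\xi$ (so that $A_1^{1/2}v=|\xi|^{-1}(\xi\cdot v)\,\xi$ and $A_1v\cdot v=(\xi\cdot v)^2$), while $A_2^{1/2}$ is invertible because $A_2$ is positive definite. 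Set $\o_1^\varepsilon:=\{x\in\o:\chi(x_1/\varepsilon)=1\}$ and $\o_2^\varepsilon:=\{x\in\o:\chi(x_1/\varepsilon)=0\}$.

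\emph{Verifying {\rm(H2)}.} It suffices to realize an arbitrary $N\in\rtwo$ in the form $\int_{Y_2}A^{1/2}(y_1)\Phi(y_1)\,dy$ with $\Phi$ admissible in \eqref{setN}. Looking for $\Phi=\Phi(y_1)$, the condition $\div\!\big(A^{1/2}(y_1)\Phi(y_1)\big)=0$ in $\mathscr{D}'(\rtwo)$ reduces, since the field depends on $y_1$ only, to $e_1\cdot A^{1/2}(y_1)\Phi(y_1)\equiv c$ for some constant $c$. On the first phase this forces $A_1^{1/2}\Phi=(c/\xi_1)\,\xi$, a \emph{constant} vector; on the second phase one may take $\Phi=A_2^{-1/2}\big(c\,e_1+t(y_1)\,e_2\big)$ with $t\in L^2(\theta,1)$ arbitrary, so that $\Phi\in\ltwoper(Y_2;\rtwo)$. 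Averaging (the integrand being independent of $y_2$),
\[
N=\int_0^1 A^{1/2}(y_1)\Phi(y_1)\,dy_1=\Big(c,\ \tfrac{\theta\,\xi_2}{\xi_1}\,c+\int_\theta^1 t(y_1)\,dy_1\Big),
\]
and since $c$ and $\int_\theta^1 t(y_1)\,dy_1$ are free and independent real parameters, $N$ ranges over all of $\rtwo$. Hence $V=\rtwo$, that is, {\rm(H2)} holds; this step uses only $\xi_1\neq 0$ and the invertibility of $A_2^{1/2}$.

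\emph{Verifying {\rm(H1)}.} Let $\ue$ satisfy $\sup_\varepsilon\fe(\ue)<\infty$ (so $\ue\in H^1_0(\o)$) and, along a subsequence, $\ue\leftharpoonupeq u_0(x,y)$. The energy bound gives that $\grad\ue$ is bounded in $L^2(\o_2^\varepsilon)$ and that $\xi\cdot\grad\ue$ is bounded in $L^2(\o_1^\varepsilon)$, hence $\xi\cdot\grad\ue$ is bounded in $L^2(\o)$. Fix $\varphi\in\Cinfcpt(\o;\Cinfper(Y_2))$ and integrate by parts $\varepsilon\int_\o\partial_{x_1}\ue\,\varphi(x,x/\varepsilon)\,dx$; the resulting right‑hand side two‑scale converges to $-\int_{\o\times Y_2}u_0\,\partial_{y_1}\varphi$. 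On the left‑hand side I split over the two phases: the part over $\o_2^\varepsilon$ is $O(\varepsilon)$; over $\o_1^\varepsilon$ I substitute $\xi_1\,\partial_{x_1}\ue=(\xi\cdot\grad\ue)-\xi_2\,\partial_{x_2}\ue$, where the $(\xi\cdot\grad\ue)$‑term is $O(\varepsilon)$ and the $\partial_{x_2}\ue$‑term, after an integration by parts in $x_2$ (no boundary contribution, $\varphi$ having compact support in $x$), two‑scale converges to an expression involving $u_0$ and $\partial_{y_2}\varphi$ carrying the weight $\chi(y_1)$. Collecting all limits yields, for every such $\varphi$,
\[
\partial_{y_1}u_0+\frac{\xi_2}{\xi_1}\,\chi(y_1)\,\partial_{y_2}u_0=0\qquad\text{in }\mathscr{D}'(\o\times Y_2),
\]
and the same computation starting from $\varepsilon\int_\o\partial_{x_2}\ue\,\varphi(x,x/\varepsilon)\,dx$ (where now the $\o_2^\varepsilon$‑part vanishes because $\grad\ue$ is bounded there) gives $\chi_{Z_2}(y_1)\,\partial_{y_2}u_0=0$.

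It remains to solve these two relations. On $\{y_1\in(\theta,1)\}$ the second gives $\partial_{y_2}u_0=0$, and then the first (where $\chi=0$) gives $\partial_{y_1}u_0=0$; since this set is connected, $u_0(x,y)=c(x)$ there. The first relation is a linear transport equation in $(y_1,y_2)$: expanding $u_0$ in a Fourier series in $y_2$ and integrating the resulting ODE in $y_1$ with the datum $u_0=c(x)$ on $(\theta,1)$ forces every nonzero Fourier mode to vanish, so $u_0\equiv c(x)$ on the whole cell; thus $u_0$ is independent of $y$ (and $c(x)$ is the weak $L^2$‑limit of $\ue$), which is {\rm(H1)}. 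Having checked {\rm(H1)} and {\rm(H2)}, Proposition~\ref{prop:positivedefinite} gives that $A^\ast$ is positive definite. The step I expect to require most care is the bookkeeping in the two‑scale passages of {\rm(H1)}: since $\grad\ue$ is \emph{not} bounded on the first phase — only its $\xi$‑component is — the usual identification $\varepsilon\grad\ue\leftharpoonupeq\grad_y u_0$ is unavailable, and one must argue component by component, exchanging the uncontrolled $\partial_{x_1}\ue$ on $\o_1^\varepsilon$ for $\xi\cdot\grad\ue$ plus an $x_2$‑integration by parts, and checking the (standard) admissibility of test functions of the form $\chi(y_1)\,\psi(x,y_2)$ for two‑scale convergence. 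The hypothesis that $\xi$ and $A_2e_1$ be linearly independent — equivalently, that the phase fluxes $A_1e_1$ and $A_2e_1$ span $\rtwo$ — is what allows $A^\ast$ to be put in the explicit laminate form of the Appendix (Proposition~\ref{prop:Appendix}), which yields an independent, formula‑based proof of its positive definiteness.
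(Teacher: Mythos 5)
Your verification of {\rm(H2)} is essentially the paper's: you build the same laminate‑compatible divergence‑free field $A^{1/2}\Phi$ with constant $e_1$-component and free $e_2$-component in the non-degenerate phase, merely parametrized by $(c,t)$ instead of the paper's $(\alpha,\beta)$. Your verification of {\rm(H1)}, however, takes a genuinely different route. The paper passes to the two-scale limit of $\varepsilon A(\cdot/\varepsilon)\grad\ue$ against vector fields supported in the interior of each phase, obtaining $A_i\grad_y u_0=0$ on $\o\times Z_i^\#$; it then runs a separate trace-matching step across $\partial Z$ with a constant test vector $\Phi$ satisfying the transmission condition $(A_1-A_2)\Phi\cdot e_1=0$ together with $A_1\Phi\cdot e_1=A_2\Phi\cdot e_1=1$, and the linear independence of $\xi$ and $A_2e_1$ is used exactly to guarantee that such a $\Phi$ exists. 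You instead manipulate the scalar quantities $\varepsilon\,\partial_{x_j}\ue$ directly, split the integral by phase, trade the uncontrolled $\partial_{x_1}\ue$ on the first phase for $\xi\cdot\grad\ue$ plus an $x_2$-integration by parts against a $\chi(y_1)$-weighted test function, and arrive at two transport relations for $u_0$ that hold on the whole cell (not merely on each phase interior). The Fourier/ODE step then delivers the trace matching automatically, since the transport equation forces the Fourier modes of $u_0$ in $y_2$ to lie in $H^1$ — hence to be continuous — in $y_1$. This bypasses the paper's transmission‑condition construction entirely and, as you observe, never invokes the linear independence of $\xi$ and $A_2e_1$: only $\xi\cdot e_1\neq 0$ and the positive definiteness of $A_2$ are used, so your argument in fact establishes {\rm(H1)} (and hence the whole proposition) under a strictly weaker hypothesis than stated. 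One inaccuracy in your closing remark: the explicit laminate formula and the direct proof of positive definiteness in Proposition~\ref{prop:Appendix}(i) likewise use only $a>0$ (automatic here since $A_2$ is positive definite) and $\xi\cdot e_1\neq 0$, not the linear independence of $\xi$ and $A_2e_1$; in the paper that hypothesis appears only in the trace step for {\rm(H1)}, which your argument circumvents.
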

 From Theorem \ref{genthm}, we easily deduce that the energy $\fe$ defined by \eqref{funct} with $A$ given by \eqref{matrixlam} and \eqref{phasesdimtwo}  $\Gamma$-converges to the functional $\fhom$ given by \eqref{homfunc} with conductivity matrix $A^\ast$ defined by \eqref{hommat}. In the present case, the homogenized matrix $A^\ast$ has an explicit expression given in Proposition \ref{prop:Appendix} in the Appendix. 
\begin{proof} 
Firstly, let us prove  assumption (H1). 
We adapt the proof of Step 1 of \cite[Theorem 3.3]{BF19} to  two-dimensional  laminates. In our context, the algebra involved is different due to the scalar setting.
\par Denote  by $u^i_0$  the restriction of the two-scale limit $u_0$ in phase $Z_i$ or $Z_i^\#$ for $i=1,2$.   In view of \eqref{twoscale0}, for any $\Phi(x,y)\in\Cinfcpt(\o\times\rtwo;\hspace{0.09cm} \rtwo)$ with compact support in $\o\times Z^\#_1$, or due to periodicity in $\o\times X_1$,  we 
deduce that
      \begin{linenomath}
      \begin{align}
      0&= -\lim_{\varepsilon\to 0} \varepsilon\int_{\o} A\l(\xe\r)\grad\ue\cdot \Phi\l(x,\xe\r) dx \notag\\
      &=\lim_{\varepsilon\to 0}\int_{\o} \ue  \div_y(A_1\Phi(x,y))\l(x,\xe\r) dx \notag\\ 
      &= \int_{\o\times Z^\#_1} u^1_0(x,y)  \div_y(A_1\Phi(x,y)) dxdy \notag\\
      &=-\int_{\o\times Z^\#_1} A_1\grad_y u^1_0(x,y)  \cdot\Phi(x,y) dxdy,\notag
      \end{align}
      \end{linenomath}
so that
   \begin{equation}
   \label{fundest1}
   A_1\grad_y u^1_0(x,y)  \equiv 0 \qquad\text{in}\hspace{0.2cm} \Omega\times Z_1^\#.
   \end{equation}
Similarly, taking  $\Phi(x,y)\in\Cinfcpt(\o\times\rtwo; \hspace{0.09cm}\rtwo)$ with compact support in $\o\times Z_2^\#$, or equivalently in $\o\times X_2$, as test function and repeating the same arguments, we obtain
     \begin{equation}
     \label{fundest2}
     A_2\grad_y u^2_0(x,y)  \equiv 0 \qquad\text{in}\hspace{0.2cm} \Omega\times Z_2^\#.
     \end{equation}
Due to \eqref{fundest1}, in phase $Z_1^\#$ we have
         \begin{linenomath}
         \begin{equation*}
         \label{kerAdimtwo}
         \grad_yu^1_0\in\ker (A_1)=\text{Span}(\xi^\perp),
         \end{equation*}
         \end{linenomath}
where $\xi^\perp=(-\xi_2, \xi_1)\in\rtwo$ is perpendicular to $\xi=(\xi_1, \xi_2)$. Hence, $u^1_0$  reads as 
    \begin{equation}
    \label{f7}
   u_0^1(x,y) = \theta^1 (x,\hspace{0.06cm} \xi^\perp\cdot y)\qquad \mbox{a.e.}\hspace{0.1cm}(x,y)\in \o\times X_1,
    \end{equation}
for some function $\theta^1\in L^2(\o\times \R)$. 
On the other hand, since the matrix $A_2$ is positive definite, in phase $Z_2^\#$ the relation \eqref{fundest2} implies that 
      \begin{equation}
      \label{u2noy}
      u^2_0(x,y) = \theta^2(x) \qquad\mbox{a.e.}\hspace{0.1cm}(x,y)\in\o \times X_2,
      \end{equation}
for some function $\theta^2\in L^2(\o)$.  Now, consider a constant vector-valued function $\Phi$ defined on $Y_2$ such that 
    \begin{equation}
    \label{trasmcond}
    (A_1-A_2)\Phi\cdot e_1 = 0\qquad \text{on}\hspace{0.2cm} \partial Z_1^\#.
    \end{equation}
Note that condition \eqref{trasmcond} is necessary for $\div_y(A(y)\Phi)$ to be an admissible test function for two-scale convergence. In view of \eqref{twoscale0} and \eqref{u2noy},  for any $\varphi\in\Cinfcpt(\o; \Cinfper(Y_2))$, we obtain
    \begin{linenomath}
    \begin{align}
    0&= -\lim_{\varepsilon\to 0} \varepsilon\int_{\o}  A(y)\grad\ue\cdot\Phi\varphi\l(x,\xe\r)dx \notag\\
     &=\lim_{\varepsilon\to 0}  \int_{\o}\ue\div_y(A(y)\Phi\varphi(x,y))\l(x,\xe\r)dx\notag\\
     &= \int_{\o\times Z_1}u^1_0(x,y)\div_y(A_1\Phi\varphi(x,y))dxdy \notag\\
     &\quad + \int_{\o\times Z_2}\theta^2(x)\div_y(A_2\Phi\varphi(x,y))dxdy. \notag
     \end{align}
     \end{linenomath}
Take now $\varphi\in\Cinfcpt(\o\times\rtwo)$ and use the periodized function
      \begin{linenomath}
      \begin{equation*}
      \label{perfun}
      \varphi^\#(x,y) :=\sum_{k\in\mathbb{Z}^2}\varphi(x,y+k)
      \end{equation*}
      \end{linenomath}
as new test function. Then, we obtain
  \begin{linenomath} 
  \begin{align}
  0 
  &=\int_{\o\times Z_1}u^1_0(x,y)\div_y(A_1\Phi\varphi^\#(x,y))dxdy+ \int_{\o\times Z_2}\theta^2(x)\div_y(A_2\Phi\varphi^\#(x,y))dxdy\notag\\
  &=\sum_{k\in\mathbb{Z}^2}\int_{\o\times (Z_1+k)}u^1_0(x,y)\div_y(A_1\Phi\varphi(x,y))dxdy\notag\\
  &\quad +\sum_{k\in\mathbb{Z}^2} \int_{\o\times (Z_2+k)}\theta^2(x)\div_y(A_2\Phi\varphi(x,y))dxdy\notag\\
  & = \int_{\o\times Z^\#_1}u^1_0(x,y)\div_y(A_1\Phi\varphi(x,y))dxdy+ \int_{\o\times Z^\#_2}\theta^2(x)\div_y(A_2\Phi\varphi(x,y))dxdy.  \label{tracciaper}
  \end{align}
  \end{linenomath}
Recall that $A_1=\xi\otimes \xi$, where $\xi$ is such that $\xi\cdot e_1\neq0$. This combined with the linear independence of the vectors
$\xi$ and $A_2e_1$ implies that the linear map 
   \begin{linenomath}
   \begin{equation*}
   \Phi\in\rtwo\mapsto (A_1e_1\cdot\Phi,\hspace{0.03cm}A_2e_1\cdot\Phi)\in\rtwo
   \end{equation*}
   \end{linenomath}
is one-to-one. Hence, for any $f\in\R$, there exists a unique $\Phi\in\rtwo$ such that 
     \begin{equation}
     \label{equality}
     A_1\Phi\cdot e_1= A_2\Phi\cdot e_1 =f.
     \end{equation}
In view of the arbitrariness of $f$ in \eqref{equality}, we can choose  $\Phi$  such that 
     \begin{equation}
     \label{condPhi}
     A_1e_1\cdot \Phi= A_2e_1\cdot \Phi = 1\qquad \mbox{on}\hspace{0.2cm}\partial Z^\#_1.
     \end{equation}     
Since $A_1 \grad_y u^1_0 =0$ in the distributional sense and  $A_1=\xi\otimes\xi$, we deduce that $u^1_0$ is constant along the direction $\xi$. Using Fubini's theorem, we may integrate along straight lines parallel to the vector $\xi$ where integration by parts is allowed. Therefore, performing an integration by parts in \eqref{tracciaper} combined with \eqref{condPhi}, it follows that for any $\varphi\in C^\infty_{\rm c}(\Omega\times\mathbb{R}^2)$,
    \begin{linenomath}
    \begin{align}
    0 
    &=\int_{\o\times \partial Z} v_0(x,y)\varphi(x,y)dx\dboundary, \notag
    \end{align}
    \end{linenomath}
where we have set $v_0(x,y):= u^1_0(x,y) - \theta^2(x)$.  We conclude that $v_0(x,\cdot)$ has a trace on $\partial Z$ for a.e. $x\in\Omega$ satisfying 
  \begin{linenomath}
 \begin{equation}
       \label{traceequ}
       v_0(x,\cdot) = 0 \qquad\text{on}\hspace{0.2cm}\partial Z.
       \end{equation}
      \end{linenomath}

Recall that  $\partial Z= \{y_1=0\}$. Fix $x\in\o$. Taking into account \eqref{f7} and \eqref{u2noy}, the equality \eqref{traceequ} reads as 
     \begin{linenomath}
     \begin{equation*}
     \theta^1(x, \hspace{0.03cm}\xi_1y_2)= \theta^2(x) \qquad\text{on}\quad \partial Z.
     \end{equation*}
     \end{linenomath}
Since $\xi\cdot e_1 \neq 0$, it follows that $\theta^1$ only depends on $x$ so that $u^1_0(x,y)$ agrees with $\theta^2(x)$. Finally, we conclude that $u_0(x,y) := \chi(y_1)u_0^1(x,y) + (1-\chi(y_1))u^2_0(x,y) $ is independent of $y$ and hence (H1) is satisfied. 


\par
Let us prove assumption (H2). 
The proof is a variant of the Step 2 of   \cite[Theorem 3.4]{BF19}. For arbitrary $\alpha, \beta\in\R$, let $\Phi$ be a vector-valued function given by 
     \begin{equation}
     \label{vectNdtwo}
     A^{1/2}(y)\Phi(y) := \chi(y_1)\alpha\xi + (1-\chi(y_1))(\alpha\xi+\beta e_2) \qquad\mbox{for a.e.}\hspace{0.1cm} y\in\rtwo.
     \end{equation}
Such a vector field $\Phi$ does exist, since $\xi$ is in the range of $A_1$ and thus the right-hand side of \eqref{vectNdtwo} belongs pointwise to the range of $A$, or equivalently to the range of $A^{1/2}$. Moreover, the difference of two constant phases in \eqref{vectNdtwo} is orthogonal to the laminate direction $e_1$, so that $A^{1/2}\Phi$ is a laminate divergence free periodic field in $\rtwo$. Its average value is given by
      \begin{linenomath} 
      \begin{equation*}
      N:=\int_{Y_2} A^{1/2}(y)\Phi(y)dy = \alpha\xi+(1-\theta)\beta e_2.
      \end{equation*}
      \end{linenomath}
Hence, due to $\xi\cdot e_1\neq 0$ and the arbitrariness of $\alpha, \beta$, the set of the vectors $N$ spans $\rtwo$, which yields assumption (H2). \par  From Proposition \ref{prop:positivedefinite}, it immediately follows that the homogenized matrix $A^\ast$ is positive definite.
For the reader's convenience, the proof of explicit formula of $A^\ast$ is postponed to Proposition \ref{prop:Appendix} in the Appendix.
\end{proof}

\subsection{The three-dimensional case with both degenerate phases}
We are going to deal with three-dimensional laminates where both phases are degenerate. We assume that the symmetric  and non-negative matrices $A_1$ and $A_2$ of $\R^{3\times 3}$ have rank two, hence, there exist $\eta_1, \eta_2\in\R^3$ such that 
       \begin{equation}
       \label{kernranktwo}
       \ker (A_i) =  {\rm Span}(\eta_i) \qquad\mbox{for}\hspace{0.1cm} i=1,2. 
       \end{equation}
The following proposition gives the algebraic conditions so that assumptions required by Theorem \ref{genthm} are satisfied.
    \begin{prop}
    \label{thmcasematrixdege} 
    Let $\eta_1$ and $\eta_2$ be the vectors in $\R^3$ defined by \eqref{kernranktwo}.
   Assume that the vectors $\{e_1, \eta_1, \eta_2\}$ as well as $\{A_1e_1, A_2e_1\}$ are linearly independent in $\R^3$. Then, assumptions {\rm (H1)} and {\rm (H2)} are satisfied. In particular, the homogenized matrix $A^\ast$  given by \eqref{hommat} and  associated to the conductivity matrix $A$ given by \eqref{matrixlam} and \eqref{kernranktwo} is positive definite. 
   \end{prop}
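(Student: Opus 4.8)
The plan is to follow the same two-step structure used in the proof of Proposition \ref{propd2}, verifying assumptions (H1) and (H2) separately, but carrying out the linear algebra in $\R^3$ with the degeneracy directions $\eta_1,\eta_2$ playing the role that $\xi^\perp$ played in the two-dimensional case.

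\textbf{Assumption (H1).} First I would let $u_0$ be a two-scale limit of a sequence $\ue$ with bounded energy and, exactly as in the derivation of \eqref{fundest1}--\eqref{fundest2}, test $\varepsilon A(x/\varepsilon)\grad\ue \leftharpoonupeq 0$ against functions supported in $\o\times Z_i^\#$ to obtain $A_i\grad_y u_0^i \equiv 0$ in $\o\times Z_i^\#$, hence $\grad_y u_0^i(x,y)\in\ker(A_i)=\mathrm{Span}(\eta_i)$ for $i=1,2$. Thus in each phase $u_0^i$ is constant along all directions orthogonal to $\eta_i$; writing the slab $X_i$ in coordinates adapted to $\eta_i$, there are functions $\theta^i$ of $x$ and of the single scalar variable $\eta_i\cdot y$ (really the coordinate conjugate to $\eta_i$ within $X_i$) with $u_0^i(x,y)=\theta^i(x,\eta_i^\perp\!\cdot y)$ in an appropriate sense. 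Next, as in the step leading to \eqref{condPhi}, the hypothesis that $\{A_1e_1,A_2e_1\}$ is linearly independent makes the map $\Phi\in\R^3\mapsto(A_1e_1\cdot\Phi,\,A_2e_1\cdot\Phi)$ surjective onto $\R^2$, so one can pick a constant $\Phi$ with $A_1e_1\cdot\Phi=A_2e_1\cdot\Phi=1$ on the interface $\partial Z$; this is precisely the transmission condition making $\div_y(A(y)\Phi\varphi)$ an admissible two-scale test function. Testing \eqref{twoscale0} against $\div_y(A(y)\Phi\varphi^\#)$ for periodized $\varphi\in\Cinfcpt(\o\times\R^3)$, integrating by parts along the lines parallel to $\eta_i$ inside each phase (legitimate since $u_0^i$ is constant in those directions), and using the normalization on $\partial Z$, the bulk terms cancel and one is left with $\int_{\o\times\partial Z}(u_0^1-u_0^2)\varphi\,dx\,\dboundary=0$, so $u_0^1$ and $u_0^2$ have matching traces on $\partial Z=\{y_1=0\}$. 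The hypothesis that $\{e_1,\eta_1,\eta_2\}$ is linearly independent is what forces triviality: on $\{y_1=0\}$ the level sets of $u_0^1$ (planes orthogonal to $\eta_1$) meet $\{y_1=0\}$ in lines with direction $\eta_1$-related, those of $u_0^2$ in lines $\eta_2$-related, and since $\eta_1,\eta_2,e_1$ are independent, their projections to $\{y_1=0\}$ span that plane, forcing the common trace to depend only on $x$; propagating back along the characteristic directions, $u_0^1=u_0^2=$ a function of $x$ alone, hence $u_0$ is $y$-independent and (H1) holds.

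\textbf{Assumption (H2).} This mimics the last part of the proof of Proposition \ref{propd2}. Pick any two vectors $\zeta_1\in\mathrm{Ran}(A_1)=\eta_1^\perp$ and $\zeta_2\in\mathrm{Ran}(A_2)=\eta_2^\perp$ with $\zeta_1-\zeta_2$ parallel to $e_1$ — equivalently $\zeta_1-\zeta_2=se_1$ with the constraints $\zeta_1\cdot\eta_1=0$, $(\zeta_1-se_1)\cdot\eta_2=0$; I would set
\begin{equation*}
A^{1/2}(y)\Phi(y):=\chi(y_1)\,\zeta_1+(1-\chi(y_1))\,\zeta_2\qquad\text{for a.e. }y\in\R^3,
\end{equation*}
which lies pointwise in $\mathrm{Ran}(A^{1/2})$ by construction, is $Y_3$-periodic, and is laminate divergence-free since the jump $\zeta_1-\zeta_2$ is along $e_1$. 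Its average is $N=\theta\zeta_1+(1-\theta)\zeta_2\in V$. It remains to check that as $\zeta_1,\zeta_2$ range over the admissible pairs the vectors $N$ span $\R^3$: the admissible set is an affine subspace of $\R^3\times\R^3$ of dimension $3$ (two linear constraints from $\zeta_1\perp\eta_1$, one more coupling $\zeta_2$), and the independence of $\{e_1,\eta_1,\eta_2\}$ guarantees the averaging map $(\zeta_1,\zeta_2)\mapsto\theta\zeta_1+(1-\theta)\zeta_2$ is onto $\R^3$ — concretely one exhibits $N=e_1$, $N$ spanning a complement, using that $e_1\notin\mathrm{Span}(\eta_1)$ and $e_1\notin\mathrm{Span}(\eta_2)$. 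Hence $V=\R^3$, which is (H2); by Proposition \ref{prop:positivedefinite}, $A^\ast$ is positive definite.

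\textbf{Main obstacle.} The delicate point is the trace-matching argument in (H1): unlike the 2D case where each phase already forced $u_0^i$ to be a function of a single scalar, here each $u_0^i$ a priori depends on two scalar variables (the two coordinates in the plane $\eta_i^\perp$), so one must be careful that the interface $\{y_1=0\}$ together with the independence of $\{e_1,\eta_1,\eta_2\}$ really does pin down $u_0^i$ to depend on $x$ alone, rather than merely constraining it on a lower-dimensional slice. Handling the integration by parts rigorously — justifying Fubini along the $\eta_i$-lines and the existence of the trace $v_0(x,\cdot)$ on $\partial Z$ in $L^2(\o\times\partial Z)$ — is the technical heart, and it is exactly where the hypotheses $\{e_1,\eta_1,\eta_2\}$ independent and $\{A_1e_1,A_2e_1\}$ independent are both consumed.
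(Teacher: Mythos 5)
Your argument for (H1) is essentially the paper's: you derive $A_i\grad_y u_0^i=0$ in each phase, deduce $u_0^i=\theta^i(x,\eta_i\cdot y)$, use a constant vector $\Phi$ normalized via the surjectivity of $\Phi\mapsto(A_1e_1\cdot\Phi,A_2e_1\cdot\Phi)$ to get the trace identity on $\partial Z$, and then exploit the independence of $\{e_1,\eta_1,\eta_2\}$ to force $\theta^1,\theta^2$ to depend on $x$ alone. Your geometric ``two foliations of $\partial Z$ by transversal line families'' argument is an equivalent rephrasing of the paper's change of variables $(y_2,y_3)\mapsto(b_1y_2+c_1y_3,\,b_2y_2+c_2y_3)$, which is invertible precisely because $\{e_1,\eta_1,\eta_2\}$ are independent. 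One note: in your ``main obstacle'' paragraph you claim that in 3D ``each $u_0^i$ a priori depends on two scalar variables (the two coordinates in the plane $\eta_i^\perp$).'' That is backwards. Since $\grad_y u_0^i\in\ker(A_i)=\text{Span}(\eta_i)$, the function $u_0^i$ is \emph{constant} along $\eta_i^\perp$ and so depends on the \emph{single} scalar $\eta_i\cdot y$, just as you correctly state earlier in the proof. The 3D case is not structurally harder than 2D on this point; the only new feature is the change of variables on the two-dimensional interface.

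There is, however, a genuine error in your (H2) argument. You impose $\zeta_1-\zeta_2=se_1$, i.e.\ the jump of $A^{1/2}\Phi$ is \emph{parallel} to $e_1$, and claim this makes the piecewise-constant field divergence-free. It is the opposite: since the field depends only on $y_1$, $\div(A^{1/2}\Phi)=\partial_1\bigl(A^{1/2}\Phi\cdot e_1\bigr)$ in $\mathscr{D}'(\R^3)$, so divergence-freeness requires the $e_1$-component to be continuous across $\partial Z$, i.e.\ $(\zeta_1-\zeta_2)\cdot e_1=0$ — the jump must be \emph{orthogonal} to the laminate normal $e_1$, exactly as in the paper's set $E$ in \eqref{setE} and in the sentence ``since the difference of the phases $\xi_1$ and $\xi_2$ is orthogonal to the laminate direction $e_1$.'' With your constraint, divergence-freeness forces $s=0$, hence $\zeta_1=\zeta_2\in\eta_1^\perp\cap\eta_2^\perp$, which is only a one-dimensional family, and the averages $N=\theta\zeta_1+(1-\theta)\zeta_2$ span a line, not $\R^3$. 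Your dimension count (``two linear constraints from $\zeta_1\perp\eta_1$, one more coupling $\zeta_2$'') is also off: $\zeta_1\perp\eta_1$ is one scalar constraint, $\zeta_2\perp\eta_2$ is one, and $(\zeta_1-\zeta_2)\cdot e_1=0$ is one, for a total of three constraints on $\R^6$, which is what makes $E$ three-dimensional and lets the map $(\zeta_1,\zeta_2)\mapsto\theta\zeta_1+(1-\theta)\zeta_2$ restricted to $E$ be a bijection onto $\R^3$. You should replace ``parallel to $e_1$'' with ``orthogonal to $e_1$'' and then verify, as the paper does, that the kernel of the averaging map restricted to $E$ is trivial, which again uses the independence of $\{e_1,\eta_1,\eta_2\}$.
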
 
Invoking again Theorem  \ref{genthm}, the energy $\fe$ defined by \eqref{funct} with $A$ given by \eqref{matrixlam} and \eqref{kernranktwo}, $\Gamma$-converges for the weak topology  of $L^2(\o)$ to $\fhom$  where the effective conductivity $A^\ast$ is given by \eqref{hommat}. As in two-dimensional laminate materials, $A^\ast$ has an explicit expression  (see Proposition \ref{prop:Appendix} in the Appendix). 
\begin{proof} 
Let us first check assumption (H1). The proof is an adaptation  of the first step of \cite[Theorem 3.3]{BF19}. Same arguments as in the proof of Proposition \ref{propd2} show that
    \begin{equation}
    \label{fundestdimthree}
    A_i\grad_y u^i_0(x,y) \equiv 0 \qquad \mbox{in}\hspace{0.1cm} \o\times Z_i^\#\quad\mbox{for}\hspace{0.1cm} i=1,2.
    \end{equation}
In view of \eqref{kernranktwo} and \eqref{fundestdimthree}, in phase $Z_i^\#$, $u^i_0$ reads as 
      \begin{equation}
      \label{functionui0}
      u^i_0(x,y) = \theta^i(x, \eta_i\cdot y) \qquad \mbox{a.e.}\hspace{0.1cm} (x,y)\in\o\times X_i, 
      \end{equation}
for some function $\theta^i\in L^2(\o\times\R) $ and $i=1,2$. 
Now, consider a constant vector-valued function $\Phi$ on $Y_3$ such that the transmission condition \eqref{trasmcond} holds. In view of \eqref{twoscale0}, for any $\varphi\in\Cinfcpt(\o,\hspace{0.08cm} \Cinfper(Y_3))$, we  obtain
     \begin{linenomath}
     \begin{align}
     0&=-\lim_{\varepsilon\to 0} \varepsilon\int_{\o} A(y)\grad\ue\cdot\Phi\varphi\l(x,\xe\r)dx\notag\\
     &= \int_{\o\times Z_1}u^1_0(x,y)\div_y(A_1\Phi\varphi(x,y))dxdy\notag \\
     &\quad + \int_{\o\times Z_2}u^2_0(x,y)\div_y(A_2\Phi\varphi(x,y))dxdy. \label{d3}
     \end{align}
     \end{linenomath}
Take $\varphi\in\Cinfcpt(\o\times\R^3)$. Putting the periodized function
     \begin{linenomath}
     \begin{equation*}
     \varphi^\# (x,y) := \sum_{k\in\mathbb{Z}^3} \varphi(x, y+k)
     \end{equation*}
     \end{linenomath}
as test function in \eqref{d3}, we get
    \begin{equation}
    \label{dimthree}
    \int_{\o\times Z_1^\#} u^1_0(x,y)\div_y(A_1\Phi\varphi(x,y)) dxdy +\int_{\o\times Z_2^\#} u^2_0(x,y)\div_y(A_2\Phi\varphi(x,y)) dxdy=0.
    \end{equation}
Since the vectors $A_1e_1$ and $A_2e_1$ are independent in $\R^3$, the linear map
     \begin{linenomath}
     \begin{equation*}
     \Phi\in\mathbb{R}^3\mapsto(A_1 e_1\cdot\Phi,\hspace{0.06cm} A_2e_1\cdot\Phi)\in\rtwo
     \end{equation*}
     \end{linenomath}
is surjective. In particular, for any $f\in\R$, there exists $\Phi\in\R^3$ such that
      \begin{equation}
      \label{tr1}
      A_1\Phi\cdot e_1=A_2\Phi\cdot e_1=f.
      \end{equation} 
In view of the arbitrariness of $f$ in \eqref{tr1}, we can choose $\Phi$ such that \eqref{condPhi} is satisfied.  
Due to \eqref{fundestdimthree} and  \eqref{kernranktwo}, we deduce that $u^i_0$ is constant along the plane $\Pi_i$ perpendicular to $\eta_i$, for $i=1,2$. This implies that, thanks to Fubini's theorem, we may integrate along the plane $\Pi_i$ where an integration by part may be performed. Hence, an integration by parts  in \eqref{dimthree}  combined  with
\eqref{condPhi},  yields for any $\varphi\in\Cinfcpt(\o\times \R^3)$,
  \begin{linenomath}
  \begin{equation*}
  \int_{\o\times\partial Z}\l[u^1_0(x,y)-u^2_0(x,y)  \r]\varphi(x,y)dx\dboundary =0,
  \end{equation*}
  \end{linenomath}
which implies that 
     \begin{equation}
     \label{traceeq1}
     u^1_0(x,\cdot) = u^2_0(x,\cdot) \qquad \text{on}\quad \partial Z.
     \end{equation}

 
 
Fix $x\in\o$ and recall that $\partial Z=\{y_1=0\}$. In view of  \eqref{functionui0}, the relation \eqref{traceeq1} reads as
       \begin{linenomath}
       \begin{align}
       \label{te1}
       \theta^1(x,\hspace{0.1cm} b_1y_2 + c_1y_3) = \theta^2(x,\hspace{0.1cm} b_2y_2 + c_2y_3) \qquad \text{on}\quad \partial Z ,
       \end{align}
       \end{linenomath}
with $\eta_i = (a_i, b_i, c_i)$ for $i=1,2$.
Due to the independence of $\{e_1, \eta_1, \eta_2\}$ in $\mathbb{R}^3$, the linear map $( y_2, y_3)\in\R^2\mapsto (z_1, z_2)\in\R^2$ defined by
       \begin{linenomath}
       \begin{equation*}
       z_1 :=b_1y_2 + c_1y_3, \qquad z_2 := b_2y_2 + c_2y_3,
       \end{equation*}
       \end{linenomath}
is a change of variables so that \eqref{te1} becomes
     \begin{linenomath}
     \begin{equation*}
     \label{constfunc}
     \theta^1 (x, z_1) = \theta^2(x, z_2) \qquad \text{a.e.} \quad z_1, z_2\in\mathbb{R}.
     \end{equation*} 
     \end{linenomath}
This implies that $\theta^1$ and $\theta^2$  depend only on $x$ and thus $u^1_0$ and $u^2_0$ agree with some function $u\in L^2(\o)$. Finally, we conclude that $u_0(x,y)= \chi(y_1)u^1_0(x,y) + (1-\chi(y_1))u^2_0(x,y)$ is independent of $y$ and hence (H1) is satisfied. 
\par It remains to prove  assumption (H2). To this end,
let $E$ be the subset of $\mathbb{R}^3\times\mathbb{R}^3$ defined by
    \begin{equation}
    \label{setE}
    E:= \{ (\xi_1, \xi_2)\in\mathbb{R}^3\times\mathbb{R}^3\hspace{0.02cm}:\hspace{0.02cm} (\xi_1-\xi_2)\cdot e_1 =0,  \hspace{0.2cm}\xi_1\cdot \eta_1=0, \hspace{0.2cm}\xi_2\cdot \eta_2=0   \}.
    \end{equation} 
For $(\xi_1, \xi_2)\in E$, let $\Phi$ be  the vector-valued function defined by
      \begin{equation}
      \label{vectPhidimthree}
      A^{1/2}(y)\Phi(y) := \chi(y_1)\xi_1+(1-\chi(y_1))\xi_2 \qquad \mbox{a.e.}\hspace{0.1cm}y\in\R^3.
      \end{equation} 
The existence of such a vector field $\Phi$ is guaranteed by the conditions $\xi_i\cdot\eta_i=0$, for $i=1,2$, which imply that $\xi_i$ belongs to the range of $A_i$ and hence the right-hand side of \eqref{vectPhidimthree} belongs pointwise to the range of $A$, or equivalently to the range of $A^{1/2}$. Moreover, since the difference of the phases $\xi_1$ and $\xi_2$ is orthogonal to the laminate direction $e_1$, $A^{1/2}\Phi$ is a laminate divergence free periodic field in $\R^3$. Its average value is given by
     \begin{linenomath}
     \begin{equation*}
     N:= \int_{Y_3}A^{1/2}(y)\Phi(y)dy = \theta\xi_1+(1-\theta)\xi_2.
     \end{equation*}
     \end{linenomath}
Note that $E$ is a linear subspace of $\R^3\times\R^3$ whose dimension is three. Indeed, let $f$ be the linear map defined by
    \begin{linenomath}
    \begin{equation*}
    (\xi_1, \xi_2)\in \R^3\times\R^3\mapsto \l((\xi_1-\xi_2)\cdot e_1,\hspace{0.1cm} \xi_1\cdot\eta_1,\hspace{0.1cm} \xi_2\cdot\eta_2 \r)\in\R^3.
    \end{equation*}
    \end{linenomath}     
If we identity the pair $(\xi_1,\xi_2)\in\R^3\times \R^3$ with the vector $(x_1, y_1, z_1, x_2, y_2, z_2)\in\R^6$, with $\xi_i= (x_i, y_i, z_i)$, for $i=1,2$, the associated matrix $M_f\in\R^{3\times 6}$ of $f$ is given by
      \begin{linenomath}
      \begin{equation*}
      M_f:=
      \begin{pmatrix}
      1 & 0 & 0 & -1 & 0 & 0\\
      a_1 & b_1 & c_1 & 0 & 0 & 0\\
      0 & 0 & 0 & a_2 & b_2 & c_2
      \end{pmatrix},
      \end{equation*}
      \end{linenomath}
with $\eta_i=(a_i, b_i, c_i)$, $i=1,2$. In view of the linear independence of $\{ e_1, \eta_1, \eta_2 \}$, the rank of $M_f$ is three, which implies that the dimension of kernel $\ker(f)$ is also three. Since the kernel $\ker(f)$ agrees with $E$, we conclude that the dimension of $E$ is three. 
\par Now, let $g$ be the linear map defined by
     \begin{linenomath}
     \begin{equation*}
     (\xi_1, \xi_2)\in E\mapsto \theta\xi_1+(1-\theta)\xi_2\in\mathbb{R}^3.
     \end{equation*}
     \end{linenomath}    
Let us show that $g$ is invertible.
To this end, consider $(\xi_1, \xi_2)\in\ker(g)$. From the definition of the map $g$, $\ker(g)$ consists of  all vectors $(\xi_1, \xi_2)\in E$ of the form
     \begin{linenomath}
     \begin{equation}
     \label{vectkerg}
     \l(\xi_1, \hspace{0.1cm} \frac{\theta}{\theta-1}\xi_1\r).
     \end{equation}
     \end{linenomath} 
In view of the definition of $E$ given by \eqref{setE}, the vector  \eqref{vectkerg}  satisfies the conditions
    \begin{linenomath}
    \begin{equation*}
    \l(\xi_1-\frac{\theta}{\theta-1}\xi_1 \r)\cdot e_1=0, \hspace{0.3cm} \xi_1\cdot\eta_1=0, \hspace{0.3cm} \frac{\theta}{\theta-1}\xi_1\cdot\eta_2=0.
    \end{equation*}
    \end{linenomath}
This combined with the linear independence of $\{e_1, \eta_1, \eta_2 \}$ implies that
       \begin{linenomath}
       \begin{equation*}
       \xi_1\in\{e_1,\eta_1, \eta_2 \}^\perp =\{0\}. 
       \end{equation*}
       \end{linenomath}
Hence, $\ker(g)=\{(0,0)\}$ which implies along with the fact that the dimension of $E$ is three that  $g$ is invertible. This proves that all the vectors of $\R^3$ can be attained through the map $g$ so that assumption (H2) is satisfied.
\par Thanks to Proposition \ref{prop:positivedefinite}, the homogenized matrix $A^\ast$ turns out to be positive definite. The proof of the explicit expression of $A^\ast$ is given in Proposition \ref{prop:Appendix} in the Appendix.
\end{proof}

\section{A two-dimensional counter-example}
In this section we are going to construct a counter-example of two-dimensional laminates with two degenerate  phases, where the lack of assumption (H1) provides an anomalous asymptotic behaviour of the functional $\fe$ \eqref{funct}.
\par Let $\o:=(0,1)^2$ and let $e_2$ be the laminate direction. We assume that the non-negative and symmetric matrices $A_1$ and $A_2$ of $\R^{2\times 2}$ are given by 
      \begin{linenomath}
      \begin{equation*}
      A_1=e_1\otimes e_1 \quad\text{and}\quad A_2=ce_1\otimes e_1,
      \end{equation*}
      \end{linenomath}
for some positive constant $c>1$. The presence of $c\neq 1$ is  essential to have oscillation in the conductivity matrix $A$. In the present case, the matrix-valued conductivity $A$ is given by 
       \begin{linenomath}
       \begin{equation}
       \label{condmatrcountex}
       A(y_2) := \chi(y_2)A_1+(1-\chi(y_2))A_2= a(y_2)e_1\otimes e_1\qquad \mbox{for}\hspace{0.1cm}y_2\in\R,
       \end{equation}
      \end{linenomath}
with
      \begin{linenomath}
      \begin{equation}
      \label{acountex}
      a(y_2) := \chi(y_2) + c(1-\chi(y_2))\geq 1.
      \end{equation}
      \end{linenomath}
Thus, the energy $\fe$, defined by \eqref{funct}  with $A(y)$ given by \eqref{condmatrcountex} and \eqref{acountex} becomes
        \begin{linenomath} 
        \begin{align}
        \label{c10}
        \fe(u)=
        \begin{dcases}
        \int_{\o}\l[a\l(\xetwo\r)\l(\partial u\over \partial x_1\r)^2 + |u|^2\r] dx,&\mbox{if $u\in  H^1_0((0,1)_{x_1}; L^2(0,1)_{x_2}),$}\\ 
         &\\
        \hspace{2cm}\infty,&\mbox{if $u\in L^2(\o)\setminus H^1_0((0,1)_{x_1}; L^2(0,1)_{x_2}) $}.
        \end{dcases}
        \end{align} 
        \end{linenomath}
We denote by $\ast_1$ the convolution with respect to the variable $x_1$, \textit{i.e.} for $f\in L^1(\R^2)$ and $g\in L^2(\R^2)$
                \begin{linenomath}
                \begin{equation*}
                (f\ast_1g)(x_1,x_2) = \int_{\mathbb{R}}f(x_1-t, x_2)g(t,x_2)dt.
                \end{equation*}
                \end{linenomath}
Throughout this section, $c_{\theta}$ denotes the positive constant given by  
         \begin{linenomath}
         \begin{equation}
         \label{ctheta}
         c_\theta:= c\theta+1-\theta,
         \end{equation}
         \end{linenomath}
where $\theta\in (0,1)$ is the volume fraction of the phase $Z_1$ in $Y_2$.
The following result proves the $\Gamma$-convergence of $\fe$ for the weak topology of $L^2(\o)$ and provides two alternative expressions of the $\Gamma$-limit, one of that seems nonlocal due to presence of convolution term (see Remark \ref{rmk:BDrepresentation} below). 
  \begin{prop}
   \label{prop1}
   Let $\fe$ be the functional defined by \eqref{c10}. Then, $\fe$ $\Gamma$-converges for the weak topology of $L^2(\o)$ to the functional defined by 
              \begin{linenomath}
             \begin{equation*}
               \label{c12}
               \mathscr{F} (u) :=
               \begin{dcases}
                 \int_{0}^{1}dx_2\int_{\mathbb{R}} \frac{1}{\hat{k}_0(\lambda_1)}|\fourtra_2 (u)(\lambda_1, x_2)|^2d\lambda_1,&\mbox{if $u\in H^1_0((0,1)_{x_1}; L^2(0,1)_{x_2})$},\\
                  &\\
                \hspace{2cm}\infty,&\mbox{if $u\in L^2(\o)\setminus H^1_0((0,1)_{x_1}; L^2(0,1)_{x_2})$},
                 \end{dcases}
               \end{equation*}
                \end{linenomath}
    where  $\fourtra_2 (u)(\lambda_1, \cdot)$ denotes the Fourier transform on $L^2(\R)$ of parameter $\lambda_1$ with respect to the variable $x_1$ of the function $x_1\mapsto u(x_1,\cdot)$  extended by zero outside $(0,1)$ and 
       \begin{linenomath}
       \begin{equation}
           \label{defk0}
           \hat{k}_0(\lambda_1):= \int_{0}^{1}\frac{1}{4\pi^2a(y_2)\lambda_1^2 +1}dy_2.
           \end{equation} 
       \end{linenomath}
    The $\Gamma$-limit $\mathscr{F}$ can be also expressed as 
              \begin{linenomath}
              \begin{align}
              \label{funccontex}
              \mathscr{F}(u) :=
              \begin{dcases}
              \int_{0}^{1}dx_2\int_{\mathbb{R}}\l\{\frac{c}{c_\theta}\l(\frac{\partial u}{\partial x_1}\r)^2(x_1,x_2) +[\sqrt{\alpha}u(x_1,x_2) + (h\ast_1u)(x_1,x_2)]^2\r\}dx_1,&\\
              \hspace{6.7cm}\mbox{if $u\in H^1_0((0,1)_{x_1}; L^2(0,1)_{x_2})$},&\\
              &\\
              \hspace{2cm}\infty,\hspace{4.2cm}\mbox{if $u\in L^2(\o)\setminus H^1_0((0,1)_{x_1}; L^2(0,1)_{x_2})$},&
              \end{dcases}
              \end{align}
              \end{linenomath}
      where 
      $c_\theta$ is given by \eqref{ctheta} and  $h$ is a real-valued function in  $L^2(\R)$ defined by means of its Fourier transform $\fourtra_2$ on $L^2(\R)$
            \begin{equation}
            \label{four2}
            \fourtra_2(h)(\lambda_1) :=  \sqrt{\alpha +f(\lambda_1)}-\sqrt{\alpha},
            \end{equation}
     where $\alpha$ and $f$ are given by 
                  \begin{linenomath}
                  \begin{equation}
                  \label{alphaf}
                  \alpha:= \frac{c^2\theta +1-\theta}{c_\theta^2}> 0, \qquad f(\lambda_1):=\frac{(c-1)^2\theta(\theta-1)}{c^2_\theta}\frac{1}{c_\theta4\pi^2\lambda_1^2 + 1}.
                  \end{equation}
                  \end{linenomath}
     Moreover, any two-scale limit $u_0(x,y)$ of a sequence $\ue$ with  bounded energy $\fe$  depends on the variable $y_2\in Y_1$. 
   \end{prop}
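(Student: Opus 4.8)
The strategy is to exploit the fact that in this example the energy density involves only $\partxone u$ while the coefficient $a$ oscillates in the transverse direction $x_2$, so that after a partial Fourier transform in $x_1$ the functional becomes \emph{diagonal} in the frequency and reduces, frequency by frequency, to a one‑dimensional homogenisation of harmonic–mean type. Indeed, for $u$ in the domain $H^1_0((0,1)_{x_1};L^2(0,1)_{x_2})$, Plancherel's theorem in $x_1$ gives
\begin{equation*}
\fe(u)=\int_0^1 dx_2\int_{\R}\bigl(4\pi^2 a(\xetwo)\lambda_1^2+1\bigr)\,|\fourtra_2(u)(\lambda_1,x_2)|^2\,d\lambda_1 .
\end{equation*}
Writing $b_\varepsilon(\lambda_1,x_2):=4\pi^2 a(\xetwo)\lambda_1^2+1$, which oscillates in $x_2$ between the two positive constants $4\pi^2\lambda_1^2+1$ and $4\pi^2 c\lambda_1^2+1$, one has $1/b_\varepsilon(\lambda_1,\cdot)\rightharpoonup\hat k_0(\lambda_1)$ weakly-$*$ in $L^\infty(0,1)$, with $\hat k_0$ given by \eqref{defk0}. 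The $\Gamma$-limit of each one‑dimensional functional $v\mapsto\int_0^1 b_\varepsilon(\lambda_1,x_2)|v(x_2)|^2dx_2$ for the weak $L^2$-topology is $v\mapsto\int_0^1|v(x_2)|^2/\hat k_0(\lambda_1)\,dx_2$ (the classical harmonic–mean formula), and integrating in $\lambda_1$ is expected to yield the first expression of $\mathscr F$.

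For the $\Gamma$-$\liminf$ inequality I would either argue directly in Fourier variables by ``completing the square'' — for any test field $\psi(\lambda_1,x_2)$ one has $b_\varepsilon|\fourtra_2(u_\varepsilon)|^2\ge 2\,\mathrm{Re}\bigl(\psi\,\overline{\fourtra_2(u_\varepsilon)}\bigr)-|\psi|^2/b_\varepsilon$, and passing to the limit (using $\fourtra_2(u_\varepsilon)\rightharpoonup\fourtra_2(u)$ and $1/b_\varepsilon\rightharpoonup\hat k_0$) and then optimising over $\psi$ gives $\liminf_\varepsilon\fe(u_\varepsilon)\ge\int_0^1 dx_2\int_{\R}|\fourtra_2(u)|^2/\hat k_0\,d\lambda_1=\mathscr F(u)$ — or, equivalently, argue via two‑scale convergence: if $\sup_\varepsilon\fe(u_\varepsilon)<\infty$ then $\partxone u_\varepsilon$ is bounded in $L^2(\o)$, so up to a subsequence $u_\varepsilon\leftharpoonupeq u_0(x_1,x_2,y_2)$, $\partxone u_\varepsilon\leftharpoonupeq\partxone u_0$, $u=\int_{Y_1} u_0(\cdot,\cdot,y_2)\,dy_2$, two‑scale lower semicontinuity gives $\liminf_\varepsilon\fe(u_\varepsilon)\ge\int_{\o\times Y_1}\{a(y_2)(\partxone u_0)^2+u_0^2\}\,dx\,dy_2$, and minimising the right‑hand side over admissible profiles decouples, after $\fourtra_2$ in $x_1$, over $(\lambda_1,x_2)$ into the scalar problem $\min\{\int_{Y_1}(4\pi^2 a(y_2)\lambda_1^2+1)|g|^2dy_2:\int_{Y_1} g\,dy_2=\fourtra_2(u)(\lambda_1,x_2)\}$, whose value is $|\fourtra_2(u)(\lambda_1,x_2)|^2/\hat k_0(\lambda_1)$ by the Cauchy--Schwarz inequality. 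Since $1/\hat k_0(\lambda_1)\sim(c/c_\theta)\,4\pi^2\lambda_1^2$ as $|\lambda_1|\to\infty$, finiteness of $\mathscr F(u)$ forces $\partxone u\in L^2$, whence the domain of $\mathscr F$ is $H^1_0((0,1)_{x_1};L^2(0,1)_{x_2})$.

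The $\Gamma$-$\limsup$ inequality is the technical heart, and I expect the main obstacle to be producing a recovery sequence that both matches the minimising profile above \emph{and} belongs to the domain of $\fe$. The minimiser is constant in $y_2$ on each phase, so the natural candidate has the form $\overline u_\varepsilon(x_1,x_2)=\chi(\xetwo)\,w_1(x_1,x_2)+(1-\chi(\xetwo))\,w_c(x_1,x_2)$, with $w_1,w_c$ read off frequency‑by‑frequency from the Cauchy--Schwarz optimiser and satisfying $\theta w_1+(1-\theta)w_c=u$; the jumps of $\chi(\cdot/\varepsilon)$ are smoothed over a vanishing length — which is harmless since $\fe$ does not involve $\partxtwo$ — so that $\overline u_\varepsilon\in H^1_0(\o)$, and a boundary correction near $x_1\in\{0,1\}$ is used to respect the trace condition in $x_1$. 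One then checks that $\overline u_\varepsilon\rightharpoonup u$ in $L^2(\o)$ and, using the weak-$*$ convergence of $\chi(\cdot/\varepsilon)$ and of $a(\cdot/\varepsilon)$, that $\fe(\overline u_\varepsilon)\to\mathscr F(u)$. By construction the two‑scale limit of $\overline u_\varepsilon$ is $\chi(y_2)w_1(x)+(1-\chi(y_2))w_c(x)$, which depends genuinely on $y_2$ whenever $u\not\equiv0$; this establishes the last assertion, namely that assumption (H1) fails here.

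It remains to identify the second expression of $\mathscr F$ with the first. Setting $t=4\pi^2\lambda_1^2$, one computes $\hat k_0(\lambda_1)=\dfrac{\theta}{t+1}+\dfrac{1-\theta}{ct+1}=\dfrac{c_\theta t+1}{(t+1)(ct+1)}$, and polynomial division gives
\begin{equation*}
\frac1{\hat k_0(\lambda_1)}=\frac{c}{c_\theta}\,4\pi^2\lambda_1^2+\alpha+f(\lambda_1),
\end{equation*}
where $\alpha$ and $f$ are exactly as in \eqref{alphaf} (one checks that $1-\alpha=\theta(\theta-1)(c-1)^2/c_\theta^2$, which is precisely the coefficient appearing in $f$). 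By Plancherel, $\int_{\R}\tfrac{c}{c_\theta}4\pi^2\lambda_1^2|\fourtra_2(u)|^2d\lambda_1=\tfrac{c}{c_\theta}\int_{\R}(\partxone u)^2dx_1$, while $\int_{\R}(\alpha+f(\lambda_1))|\fourtra_2(u)|^2d\lambda_1=\int_{\R}\bigl|(\sqrt\alpha+\fourtra_2(h)(\lambda_1))\,\fourtra_2(u)\bigr|^2d\lambda_1=\int_{\R}\bigl(\sqrt\alpha\,u+h\ast_1 u\bigr)^2dx_1$ provided $\fourtra_2(h)=\sqrt{\alpha+f}-\sqrt\alpha$, which is \eqref{four2}. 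This choice is legitimate: $\alpha+f(\lambda_1)=\tfrac1{\hat k_0(\lambda_1)}-\tfrac{c}{c_\theta}4\pi^2\lambda_1^2$ equals $1$ at $\lambda_1=0$ and increases to $\alpha>1$ as $|\lambda_1|\to\infty$, so $1\le\alpha+f\le\alpha$ and the square root is real; moreover $\fourtra_2(h)$ is real and even, so $h$ is a real function, and $\fourtra_2(h)(\lambda_1)=O(\lambda_1^{-2})$, so $h\in L^2(\R)$. Combining the three terms gives the second expression of $\mathscr F$, which completes the proof.
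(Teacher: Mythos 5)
Your overall route matches the paper's: a $\liminf$ estimate via two-scale convergence (or, equivalently, a frequency-domain argument), a recovery sequence of the form $u_0(x_1,x_2,x_2/\varepsilon)$ with $u_0$ piecewise constant in $y_2$ for the $\limsup$, and the algebraic identity $1/\hat k_0(\lambda_1) = (c/c_\theta)4\pi^2\lambda_1^2 + \alpha + f(\lambda_1)$ combined with the convolution theorem for the second expression.

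Your ``completing the square'' version of the $\Gamma$-$\liminf$ is a genuine simplification compared to what the paper does. The paper's Step~1 proceeds through two-scale convergence in full: it first shows that the two-scale limit is independent of $y_1$, then approximates the discontinuous coefficient $a$ by smooth periodic $a_k$ to justify lower semicontinuity of $\int a(x_2/\varepsilon)(\partial_{x_1}u_\varepsilon)^2\,dx$, then eliminates the $y_1$-corrector $\tilde u$ by Jensen's inequality, and finally minimizes over profiles via an Euler equation. Your direct frequency argument, using only Plancherel, the weak-$*$ convergence $1/b_\varepsilon(\lambda_1,\cdot)\rightharpoonup^*\hat k_0(\lambda_1)$ and the elementary inequality $b_\varepsilon|g|^2\ge 2\,\mathrm{Re}(\psi\overline g)-|\psi|^2/b_\varepsilon$, reaches the same conclusion while avoiding all these steps. (Your two-scale variant contains an imprecision: $\partial_{x_1}u_\varepsilon$ two-scale converges to $\partial_{x_1}u_0+\partial_{y_1}\tilde u$, not to $\partial_{x_1}u_0$; the corrector is killed by Jensen's inequality as in the paper, so the end result is the same.) Your observation that $1\le\alpha+f(\lambda_1)\le\alpha$ is also sharper than the remark in the paper.

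The $\Gamma$-$\limsup$ step, however, is where your sketch has a real gap. The fields $w_1, w_c$ defined frequency-by-frequency via the multipliers $\fourtra_2(u)/[(4\pi^2 a\lambda_1^2+1)\hat k_0]$ are not supported in $(0,1)$ in $x_1$ and do not vanish at $x_1=0,1$, since these multipliers are nonlocal in $x_1$. You say ``a boundary correction near $x_1\in\{0,1\}$ is used,'' but you do not say what it is, nor why it does not affect the limiting energy. This is exactly the technical heart of the paper's Step~2: the profile $u_0(\cdot,x_2,y_2)$ is defined \emph{not} by the Fourier multiplier but as the solution of the Sturm--Liouville boundary-value problem $-a(y_2)\partial_{x_1}^2u_0+u_0=b$ on $(0,1)$ with homogeneous Dirichlet conditions; Lemma~4.2 then establishes that for $u\in C^\infty_{\rm c}(\Omega)$ one has $u_0\in C^1([0,1]^2;L^\infty_{\rm per}(Y_1))$, which is what makes $u_0(x_1,x_2,x_2/\varepsilon)$ and $\partial_{x_1}u_0(x_1,x_2,x_2/\varepsilon)$ admissible test functions for two-scale convergence; and the passage from $C^\infty_{\rm c}(\Omega)$ to all of $H^1_0((0,1)_{x_1};L^2(0,1)_{x_2})$ requires a density argument in the metrized weak topology, which hinges on the continuity estimate $\mathscr F(u)\le C\|u\|^2_{H^1_0((0,1)_{x_1};L^2(0,1)_{x_2})}$ coming from $1/\hat k_0(\lambda_1)\le C(4\pi^2\lambda_1^2+1)$. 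None of these ingredients appears in your proposal, and without them the $\limsup$ inequality is not established.
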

   \begin{remark}
    From  \eqref{alphaf}, we can deduce that
        \begin{linenomath}
        \begin{equation*}
        \alpha +f(\lambda_1) = {1\over c^2_\theta(c_\theta4\pi^2\lambda_1^2 + 1)}\l\{(c^2\theta+1-\theta)c_\theta4\pi^2\lambda_1^2+ [(c-1)\theta+1]^2   \r\}> 0 \qquad \forall\lambda_1\in\R,
        \end{equation*}
        \end{linenomath}
   so that the Fourier transform of $h$ is well-defined.
   \end{remark}



\begin{proof} We divide the proof into three steps.

\smallskip
\noindent\textit{Step 1} - $\Gamma$-$\liminf$ inequality.\\
Consider a sequence $\{\ue \}_\varepsilon$ converging weakly in $L^2(\o)$ to $u\in L^2(\o)$.  Our aim is to prove  that
      \begin{equation}
      \label{Gammaliminf1}
      \liminf_{\varepsilon\to 0 } \fe (\ue)\geq \mathscr{F}(u).
      \end{equation}
If the lower limit is $\infty$ then \eqref{Gammaliminf1} is trivial. Up to a subsequence, still indexed by $\varepsilon$, we may assume that $\liminf\fe(\ue)$ is a limit and we may assume henceforth that, for some $0<C<\infty$,
         \begin{linenomath}
        \begin{equation}
        \label{liminf2}
        \fe(\ue)\leq C. 
        \end{equation}
        \end{linenomath}
It follows that the sequence  $\ue$ is  bounded in $L^2(\o)$ and according to \cite[Theorem 1.2]{A92}, a subsequence, still indexed by $\varepsilon$, of that sequence  two-scale converges to some $u_0(x,y)\in L^2(\o\times Y_2)$. In other words,
     \begin{equation}
     \label{twoscalecont}
     \ue\leftharpoonupeq u_0.
     \end{equation}
In view of \eqref{acountex}, we know that $a\geq 1$ so that, thanks to \eqref{liminf2}, for another subsequence (not relabeled) we have
    \begin{equation}
    \label{convpartder}
    \frac{\partial \ue}{\partial x_1} \leftharpoonupeq \sigma_0(x,y) \qquad\mbox{with}\hspace{0.2cm}\sigma_0\in L^2(\o\times Y_2).
    \end{equation}
In particular, 
    \begin{linenomath}
    \begin{equation}
    \label{gratwoscaleconv0}
    \varepsilon\frac{\partial \ue}{\partial x_1} \leftharpoonupeq 0.
    \end{equation}
    \end{linenomath}
Take $\varphi\in \Cinfcpt(\o;\hspace{0.1cm}\Cinfper(Y_2))$. By integration by parts, we obtain 
   \begin{linenomath}
   \begin{equation*}
   \varepsilon\int_{\o}\frac{\partial \ue}{\partial x_1}\varphi\l(x,\xe\r)dx = - \int_{\o}\ue\l(\varepsilon\frac{\partial \varphi}{\partial x_1}\l(x,\xe\r)+\frac{\partial \varphi}{\partial y_1}\l(x,\xe\r) \r)dx.
   \end{equation*}
   \end{linenomath}    
Passing to the limit in both terms with the help of \eqref{twoscalecont} and \eqref{gratwoscaleconv0} leads to 
   \begin{linenomath}
   \begin{equation*}
   0= - \int_{\o\times Y_2}u_0(x,y)\frac{\partial \varphi}{\partial y_1}(x,y)dxdy,
   \end{equation*}
   \end{linenomath}
which implies that 
         \begin{linenomath}
         \begin{equation}
         \label{u10indpey1}
         u_0(x,y) \quad\mbox{is independent of}\hspace{0.1cm} y_1.
         \end{equation}
         \end{linenomath}   
Due to the link between two-scale and weak $L^2$-convergences (see  \cite[Proposition 1.6]{A92}), we have 
     \begin{equation}
     \label{c3}
     \ue\rightharpoonup u(x)=\int_{Y_1} u_0(x,y_2)dy_2\qquad\mbox{weakly in $L^2(\o)$.}
     \end{equation}
Now consider  $\varphi\in C^\infty(\overline{\o};\hspace{0.07cm} C^\infty_{\text{per}}(Y_2))$ such that
       \begin{equation}
       \label{divfreeconex}
       \frac{\partial\varphi}{\partial y_1} (x,y)=0.
       \end{equation}
Since $\ue\in H^1_0((0,1)_{x_1}; L^2(0,1)_{x_2})$, an integration by parts leads us to 
      \begin{linenomath}
      \begin{align*}
      \int_{\o} \frac{\partial\ue}{\partial x_1}\varphi\l(x, y \r)dx = -\int_{\o} \ue\frac{\partial\varphi}{\partial x_1}\l(x, y\r)dx.
      \end{align*}
      \end{linenomath}
In view of the convergences \eqref{twoscalecont} and \eqref{convpartder} together with \eqref{u10indpey1}, we can pass to the two-scale limit in the previous expression and we obtain 
      \begin{linenomath}
      \begin{align}
      \label{formgradchi}
      \int_{\o\times Y_2}\sigma_0(x,y)\varphi(x,y)dxdy 
      &=-\int_{\o\times [0,1)} u_0(x,y) \frac{\partial \varphi}{\partial x_1}(x, y )dxdy_2.
      \end{align}
      \end{linenomath}
Varying $\varphi\in\Cinfcpt(\o;\hspace{0.03cm} \Cinfper(Y_2))$, the left-hand side of \eqref{formgradchi} is bounded by a constant times $\|\varphi\|_{L^2(\o\times [0,1))}$ so that the right-hand side is a linear and continuous form in $\varphi\in L^2(\o\times Y_2)$. By the Riesz representation theorem, there exists $g\in L^2(\o\times Y_2)$ such that, for any $\varphi\in \Cinfcpt(\o;\hspace{0.03cm} \Cinfper(Y_2))$, 
     \begin{linenomath}
    \begin{equation*}
    \int_{\o\times Y_2}u_0(x,y_2) \frac{\partial \varphi}{\partial x_1}(x,y)dxdy = \int_{\o\times Y_2} g(x,y)\varphi(x,y) dxdy,
    \end{equation*}
    \end{linenomath}
which yields
    \begin{linenomath}
    \begin{equation}
    \label{partialu0x1}
    \frac{\partial u_0}{\partial x_1}(x,y_2) \in L^2(\o\times Y_1).
    \end{equation}
    \end{linenomath}
Then, an integration by parts with respect to $x_1$ of the right-hand side of \eqref{formgradchi}  yields, for any $\varphi\in C^\infty(\overline{\o};\hspace{0.03cm} \Cinfper(Y_2))$  satisfying \eqref{divfreeconex},
     \begin{linenomath}
     \begin{align}
     \int_{\o\times Y_2}\sigma_0(x,y)\varphi(x,y)dxdy &= \int_{\o\times Y_2}\frac{\partial u_0}{\partial x_1}(x, y_2)\varphi(x,y)dxdy \notag\\
     &\quad- \int_{0}^{1}dx_2\int_{Y_2}\l[u_0(1,x_2,y_2)\varphi(1, x_2, y) -u_0(0,x_2,y_2)\varphi(0, x_2, y) \r]dy. \notag
     \end{align}
     \end{linenomath}
Since for any $\varphi\in \Cinfcpt(\o;\hspace{0.03cm} \Cinfper(Y_2))$  the first two integrals are equal and bounded by a constant times $\|\varphi\|_{L^2(\Omega\times [0,1)}$, we conclude that, for any  for any $\varphi\in C^\infty(\overline{\o};\hspace{0.03cm} \Cinfper(Y_2))$  satisfying \eqref{divfreeconex},
     \begin{linenomath}
     \begin{align}
 \int_{0}^{1}dx_2\int_{Y_2}\l[u_0(1,x_2,y_2)\varphi(1, x_2, y) -u_0(0,x_2,y_2)\varphi(0, x_2, y) \r]dy=0,\notag
          \end{align}
     \end{linenomath}
 which implies that 
     \begin{linenomath}
     \begin{equation*}
     u_0(1,x_2,y_2) = u_0(0,x_2,y_2) =0 \qquad \mbox{a.e.}\hspace{0.3cm} (x_2, y_2)\in (0,1)\times Y_1.
     \end{equation*}
     \end{linenomath}
\noindent This combined with \eqref{partialu0x1} yields     
     \begin{linenomath}
     \begin{equation*}
     u_0(x_1, x_2, y_2)\in H^1_0((0,1)_{x_1}; L^2((0,1)_{x_2}\times Y_1)).
     \end{equation*}
     \end{linenomath}
Finally, an integration by parts with respect to $x_1$ of the right-hand side of \eqref{formgradchi} implies that, for any $\varphi\in C^\infty(\overline{\o}; \hspace{0.03cm} \Cinfper(Y_2))$ satisfying \eqref{divfreeconex},
    \begin{linenomath}
    \begin{align*}
    \int_{\o\times Y_2}\l( \sigma_0(x,y)-\frac{\partial u_0}{\partial x_1}(x,y_2)\r)\varphi(x,y)dxdy =0.
    \end{align*}
    \end{linenomath}
Since the orthogonal of divergence-free functions is the gradients, from the previous equality we deduce that there exists $\tilde{u}\in H^1_{\text{per}}(Y_1; L^2(\o\times Y_1))$ such that 
   \begin{equation}
   \label{expltwosca}
   \sigma_0(x,y) = \frac{\partial u_0}{\partial x_1}(x,y_2)+ \frac{\partial \tilde{u}}{\partial y_1}(x,y).
   \end{equation}
\par Let us now show  that 
    \begin{linenomath}
    \begin{equation}
    \label{V1}
    \liminf_{\varepsilon\to 0} \int_{\o}a\l(\frac{x_2}{\varepsilon}\r)\l(\frac{\partial \ue}{\partial x_1}  \r)^2dx \geq \int_{\o\times Y_2}a(y_2)\l( \frac{\partial u_0}{\partial x_1}(x,y_2)+ \frac{\partial \tilde{u}}{\partial y_1}(x,y) \r)^2dxdy. 
    \end{equation}
    \end{linenomath} 
To this end, set 
      \begin{linenomath}
      \begin{equation*}
      \sigma_\varepsilon := \frac{\partial \ue}{\partial x_1}.
      \end{equation*}
      \end{linenomath}
Since $a\in L^\infty_{\rm per}(Y_1)\subset L^2_{\rm per}(Y_1)$, there exists a sequence $a_k$ of functions in $\Cinfper(Y_1)$ such that 
        \begin{linenomath}
        \begin{equation}
        \label{V3}
        \|a-a_k\|_{L^2(Y_1)} \to 0 \quad\mbox{as $k\to\infty$},
        \end{equation}
        \end{linenomath}   
hence, by periodicity, we also have
       \begin{linenomath} 
       \begin{equation}
       \label{V4}
       \l\|a\l(\xetwo\r) - a_k\l(\xetwo\r) \r\|_{L^2(\o)} \leq C \|a-a_k\|_{L^2(Y_1)},
       \end{equation}
       \end{linenomath}
for some positive constant $C>0$. On the other hand, since $\sigma_0$ given by \eqref{expltwosca} is in $L^2(\o\times Y_2)$, there exists a sequence $\psi_n$ of functions in $\Cinfcpt(\o;\hspace{0.03cm}\Cinfper(Y_2))$ such that 
        \begin{linenomath}
        \begin{equation}
        \label{limn}
        \psi_n (x,y) \to \sigma_0(x,y)\qquad \mbox{strongly in $L^2(\o\times Y_2)$}.
        \end{equation} 
        \end{linenomath}   
From the inequality
       \begin{linenomath}
       \begin{equation*}
       \int_{\o} a\l(\xetwo\r)\l(\sigma_{\varepsilon} - \psi_n\l(x,\xe\r)\r) ^ 2 dx\geq 0,
       \end{equation*}
       \end{linenomath} 
we get
        \begin{linenomath}
        \begin{align}
        \int_{\o}a\l(\xetwo\r)\sigma_\varepsilon^2dx&\geq 2\int_{\o}a\l(\xetwo\r)\sigma_\varepsilon\psi_n\l(x,\xe\r)dx -\int_{\o}a\l(\xetwo\r)\psi_n^2\l(x,\xe\r)dx\notag\\
        &=2\int_{\o}\l(a\l(\xetwo\r)-a_k\l(\xetwo\r) \r)\sigma_\varepsilon\psi_n\l(x,\xe\r)dx + 2\int_{\o}a_k\l(\xetwo\r)\sigma_\varepsilon\psi_n\l(x,\xe\r)dx\notag\\
        &\quad-\int_{\o}a\l(\xetwo\r)\psi_n^2\l(x,\xe\r)dx.\label{V5}
        \end{align}
        \end{linenomath}
In view of \eqref{V4}, the first integral on the right-hand side of \eqref{V5} can be estimated as
      \begin{linenomath}
      \begin{align*}
      \l|\int_{\o}\l(a\l(\xetwo\r)-a_k\l(\xetwo\r) \r)\sigma_\varepsilon\psi_n\l(x,\xe\r)dx \r| &\leq C \|a-a_k\|_{L^2(Y_1)}\|\psi_n\|_{L^\infty(\o)}\|\sigma_\varepsilon\|_{L^2(\o)}\\
      &\leq C \|a-a_k\|_{L^2(Y_1)}.
      \end{align*}
      \end{linenomath}  
Hence, passing to the limit as $ \varepsilon\to 0$ in \eqref{V5} with the help of \eqref{convpartder} leads to
       \begin{linenomath}
       \begin{align*}
       \liminf_{\varepsilon\to 0} \int_{\o} a\l(\xetwo\r)\sigma^2_\varepsilon dx&\geq- C \|a-a_k\|_{L^2(Y_1)}+ 2\lim_{\varepsilon\to 0} \int_{\o}a_k\l(\xetwo\r)\sigma_\varepsilon\psi_n\l(x,\xe\r)dx \\
       &\quad -\lim_{\varepsilon\to 0}\int_{\o}a\l(\xetwo\r)\psi_n^2\l(x,\xe\r)dxdy\\
       &=2\int_{\o\times Y_2}a_k(y_2)\sigma_0(x,y)\psi_n(x,y)dxdy - C\|a-a_k\|_{L^2(Y_1)}\\
       &\quad-\int_{\o\times Y_2}a(y_2)\psi_n^2(x,y)dxdy.
       \end{align*}
      \end{linenomath}
Thanks to \eqref{V3},  we take the limit as $k\to\infty$ in the previous inequality and we obtain
           \begin{linenomath}   
           \begin{align*}
           \liminf_{\varepsilon\to 0} \int_{\o} a\l(\xetwo\r)\sigma^2_\varepsilon dx &\geq 2\int_{\o\times Y_2}a(y_2)\sigma_0(x,y)\psi_n(x,y)dxdy -\int_{\o\times Y_2}a(y_2)\psi_n^2(x,y)dxdy,
           \end{align*}
           \end{linenomath}
so that in view of \eqref{limn}, passing to the limit as $n\to\infty$ leads to
       \begin{linenomath}
       \begin{align*}
       \liminf_{\varepsilon\to 0} \int_{\o} a\l(\xetwo\r)\sigma^2_\varepsilon dx 
       &\geq \int_{\o\times Y_2}a(y_2)\sigma_0^2(x,y)dxdy.
       \end{align*}
       \end{linenomath}
This combined with \eqref{expltwosca} proves \eqref{V1}.  \par By \eqref{u10indpey1},  we already know that $u_0$ does not depend on $y_1$. In view of the periodicity of $\tilde{u}$ with respect to $y_1$, an application of Jensen's inequality leads us to 
     \begin{linenomath}
     \begin{align}
    \int_{\o\times Y_2} a(y_2)&\l(\frac{\partial u_0}{\partial x_1}(x,y_2) +\frac{\partial \tilde{u}}{\partial y_1} (x,y)\r)^2dxdy\notag\\
     &=\int_{\o}dx\int_{Y_1}a(y_2)dy_2\int_{Y_1}\l(\frac{\partial u_0}{\partial x_1}(x,y_2) +\frac{\partial \tilde{u}}{\partial y_1} (x,y)\r)^2dy_1\notag\\
     &\geq\int_{\o}dx\int_{Y_1}a(y_2)dy_2\l(\int_{Y_1} \l[\frac{\partial u_0}{\partial x_1}(x,y_2) +\frac{\partial \tilde{u}}{\partial y_1} (x,y)\r] dy_1 \r)^2\notag\\
      &=\int_{\o}dx\int_{Y_1}a(y_2)\l(\frac{\partial u_0}{\partial x_1}\r)^2(x,y_2)dy_2. \notag
     \end{align}
     \end{linenomath}
This combined with \eqref{V1} implies that
    \begin{linenomath}
    \begin{align}
    \liminf_{\varepsilon\to 0}\int_{\o}a\l(\frac{x_2}{\varepsilon}\r)\l(\frac{\partial\ue}{\partial x_1}\r)^2dx &\geq\int_{\o}dx\int_{Y_1}a(y_2)\l(\frac{\partial u_0}{\partial x_1}\r)^2(x,y_2)dy_2.\label{V6}
    \end{align}
    \end{linenomath}
\par Now, we extend the functions in $L^2(\o)$ by zero with respect to $x_1$ outside $(0,1)$ so that functions in $H^1_0((0,1)_{x_1};L^2(0,1)_{x_2})$ can be regarded as functions in $H^1(\mathbb{R}_{x_1}; L^2(0,1)_{x_2})$. Due to the weak $L^2$-lower semi-continuity of $\|\ue\|_{L^2(\o)}$ along with \eqref{V6}, we have
   \begin{linenomath}
   \begin{align}
   \liminf_{\varepsilon\to 0} \fe(\ue) 
   &\geq \int_{0}^{1}dx_2\int_{Y_1}dy_2\int_{\mathbb{R}}\l[a(y_2)\l(\frac{\partial u_0}{\partial x_1}\r)^2(x_1, x_2,y_2)+ |u_0|^2(x_1, x_2,y_2)\r]dx_1.\label{c7}
    \end{align}
    \end{linenomath}
We minimize the right-hand side with respect to $u_0(x_1, x_2, y_2)\in H^1(\mathbb{R}_{x_1};\hspace{0.03cm} L^2((0,1)_{x_2}\times Y_1))$ satisfying \eqref{c3} where the weak limit $u$ of $\ue$ in $L^2(\o)$ is fixed. The minimizer, still denoted  by $u_0$, satisfies the Euler equation
     \begin{linenomath}
     \begin{equation*}
      \int_{0}^{1}dx_2\int_{Y_1}dy_2\int_{\mathbb{R}}\l[a(y_2)\frac{\partial u_0}{\partial x_1}(x_1, x_2,y_2)\frac{\partial v}{\partial x_1}(x_1, x_2,y_2)+ u_0(x_1, x_2,y_2)v(x_1, x_2,y_2)\r]dx_1=0
     \end{equation*}
     \end{linenomath}
for any $v(x_1, x_2, y_2)\in H^1(\mathbb{R}_{x_1};\hspace{0.03cm} L^2((0,1)_{x_2}\times Y_1)) $ such that $\int_{Y_1} v(x,y_2)dy_2=0$. Then, there exists $b(x_1,x_2)\in H^{-1}(\mathbb{R}_{x_1};\hspace{0.03cm} L^2(\mathbb{R})_{x_2})$ independent of $y_2$ such that in distributions sense with respect to the variable $x_1$,
     \begin{equation}
     \label{c4}
     -a(y_2)\frac{\partial^2 u_0}{\partial x_1^2}(x_1, x_2 ,y_2) + u_0(x_1, x_2,y_2) = b(x_1, x_2) \quad \mbox{in}\hspace{0.1cm}\mathscr{D}'(\R)\hspace{0.3cm}\mbox{a.e.}\hspace{0.1cm}(x_2, y_2)\in (0,1)\times Y_1.
     \end{equation}
Taking the Fourier transform $\fourtra_2$ on $L^2(\R)$ of parameter $\lambda_1$ with respect to the variables $x_1$, the equation \eqref{c4} becomes
    \begin{equation}
    \label{c5}
    \fourtra_2(u_0)(\lambda_1,x_2, y_2) = \frac{\fourtra_2(b)(\lambda_1, x_2)}{4\pi^2a(y_2)\lambda_1^2+1} \qquad\mbox{a.e.}\hspace{0.1cm} (\lambda_1, x_2, y_2)\in \R\times (0,1)\times Y_1.
    \end{equation} 
Note that  \eqref{c5} proves in particular that the two-scale limit $u_0$ does depend on the variable $y_2$, since its Fourier transform with respect to the variable $x_1$ depends on $y_2$ through the function $a(y_2)$.
\par In light of the definition \eqref{defk0} of $\hat{k}_0$ and due to \eqref{c3}, integrating \eqref{c5} with respect to $y_2\in Y_1$ yields 
    \begin{equation}
    \label{c6}
    \fourtra_2(u)(\lambda_1, x_2) =  \hat{k}_0(\lambda_1)\fourtra_2 (b)(\lambda_1, x_2)\qquad\mbox{a.e.}\hspace{0.1cm} (\lambda_1, x_2)\in \R\times (0,1).    
    \end{equation} 
By using Plancherel's identity with respect to the variable $x_1$  in the right-hand side of \eqref{c7} and in view of  \eqref{c5} and \eqref{c6}, we obtain 
     \begin{linenomath}
     \begin{align}
     \liminf_{\varepsilon\to 0} \fe(\ue)&\geq \int_{0}^{1}dx_2\int_{Y_1}dy_2\int_{\mathbb{R}} (4\pi^2a(y_2)\lambda_1^2 +1)|\fourtra_2(u_0)(\lambda_1,x_2, y_2)|^2d\lambda_1\notag\\
     &=\int_{0}^{1}dx_2\int_{\mathbb{R}} \frac{1}{\hat{k}_0(\lambda_1)} |\fourtra_2(u)(\lambda_1, x_2)|^2d\lambda_1,\notag
     \end{align} 
     \end{linenomath}
which proves the $\Gamma$-$\liminf$ inequality.

\smallskip 
\noindent\textit{Step 2}- $\Gamma$-$\limsup$ inequality.\\
For the proof of the $\Gamma$-$\limsup$ inequality, we need the following lemma whose proof will be given later.
  \begin{lemma}
  \label{lemma:solu0}
  Let $u\in\Cinfcpt(\o)$. For fixed $x_2\in (0,1)$ and $y_2\in Y_1$, let $b(\cdot, x_2)$ be the distribution (parameterized by $x_2$) defined by 
        \begin{linenomath}
        \begin{equation}
        \label{fourukb}
        \fourtra_2(b)(\lambda_1, x_2):=\frac{1}{\hat{k}_0(\lambda_1)}
        \fourtra_2(u)(\lambda_1, x_2),
        \end{equation}
        \end{linenomath}
  where $u(\cdot, x_2)$ is extended by zero outside $(0,1)$. Let $u_0(\cdot, x_2, y_2)$ be the unique solution to problem 
    \begin{linenomath}
     \begin{align}
     \label{stuliopb}
     \begin{dcases}
     -a(y_2)\frac{\partial^2 u_0}{\partial x_1^2}(x_1,x_2,y_2) + u_0(x_1,x_2,y_2) = b(x_1,x_2), & x_1\in (0,1),\\
     u_0(0, x_2, y_2) = u_0(1,x_2,y_2)=0,&
     \end{dcases}
     \end{align}
     \end{linenomath}
  with $a(y_2)$ given by \eqref{acountex}.
  Then $b(x_1, x_2)$ is in $C([0,1]_{x_2};\hspace{0.03cm}L^2(0,1)_{x_1})$ and $u_0(x_1, x_2, y_2)$ is in $C^1([0,1]^2;\hspace{0.03cm}L^\infty_{\rm per}(Y_1))$.
  \end{lemma}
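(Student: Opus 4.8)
The plan is to first pin down the regularity of $b$ and then read off that of $u_0$ from an explicit Green's-function representation. The whole first step is governed by the Fourier multiplier $1/\hat{k}_0$. Since by \eqref{acountex} the coefficient $a$ takes only the two values $1$ and $c$, with volume fractions $\theta$ and $1-\theta$, the definition \eqref{defk0} gives, with the shorthand $X:=4\pi^2\lambda_1^2$,
\[
\hat{k}_0(\lambda_1)=\frac{\theta}{1+X}+\frac{1-\theta}{1+cX}=\frac{1+c_\theta X}{(1+X)(1+cX)},
\]
so that $1/\hat{k}_0(\lambda_1)=\dfrac{(1+4\pi^2\lambda_1^2)(1+4\pi^2 c\lambda_1^2)}{1+4\pi^2 c_\theta\lambda_1^2}$ is a smooth, even, strictly positive function with $1/\hat{k}_0(\lambda_1)=O(\lambda_1^2)$, which after polynomial division equals $\tfrac{c}{c_\theta}4\pi^2\lambda_1^2+\alpha+f(\lambda_1)$ with $\alpha,f$ as in \eqref{alphaf} (consistently with \eqref{four2}). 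Because $u\in\Cinfcpt(\o)$, for each $x_2\in(0,1)$ the slice $x_1\mapsto u(x_1,x_2)$ lies in $\Cinfcpt(\R)$, so $\fourtra_2(u)(\cdot,x_2)$ is a Schwartz function; multiplying by the smooth, polynomially bounded factor $1/\hat{k}_0$ keeps it Schwartz, whence by \eqref{fourukb} $b(\cdot,x_2)\in L^2(\R)$ — equivalently $b=-\tfrac{c}{c_\theta}\partial_{x_1}^2u+\alpha u+\gamma\,(g\ast_1 u)$ for a constant $\gamma$ and some $g\in L^1(\R)\cap L^2(\R)$. Moreover $x_2\mapsto u(\cdot,x_2)$ is $C^\infty$ as a map $[0,1]\to\Cinfcpt(\R)$ (and vanishes for $x_2$ near $0$ and $1$), and convolution against the fixed kernels above is continuous on $L^2(\R)$, so $x_2\mapsto b(\cdot,x_2)$ is continuous (in fact $C^\infty$) from $[0,1]$ into $L^2(\R)$; restricting to $x_1\in(0,1)$ gives $b\in C([0,1]_{x_2};L^2(0,1)_{x_1})$.

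Next I would solve \eqref{stuliopb} through its Green's function. For fixed $y_2\in Y_1$, \eqref{acountex} forces $a(y_2)\in\{1,c\}$, so only two operators $L_a:=-a\,\partial_{x_1}^2+\mathrm{Id}$ on $(0,1)$ with homogeneous Dirichlet conditions occur; each is coercive on $H^1_0(0,1)$, so \eqref{stuliopb} has the unique solution $u_0(x_1,x_2,y_2)=\int_0^1 G_{a(y_2)}(x_1,s)\,b(s,x_2)\,ds$, where $G_a$ is the explicit Green's function of $L_a$ built from $x_1\mapsto\sinh(x_1/\sqrt a)$ and $x_1\mapsto\sinh((1-x_1)/\sqrt a)$. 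For $a\in\{1,c\}$ the kernel $G_a$ is bounded and continuous on $[0,1]^2$, $\partial_{x_1}G_a$ is bounded off the diagonal with a jump of size $1/a$ across $x_1=s$, and both $x_1\mapsto G_a(x_1,\cdot)$ and $x_1\mapsto\partial_{x_1}G_a(x_1,\cdot)$ are continuous from $[0,1]$ into $L^2(0,1)$ (the jump set has measure zero, so dominated convergence applies).

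Finally I would read the regularity of $u_0$ off this formula. Differentiating under the integral, $\partial_{x_1}u_0=\int_0^1\partial_{x_1}G_{a(y_2)}(x_1,s)\,b(s,x_2)\,ds$ and $\partial_{x_2}u_0=\int_0^1 G_{a(y_2)}(x_1,s)\,\partial_{x_2}b(s,x_2)\,ds$, the latter being legitimate since $x_2\mapsto b(\cdot,x_2)$ is $C^1$ into $L^2(0,1)$ by the first step. Combining the $L^2(0,1)$-continuity in $x_1$ of $G_a(x_1,\cdot)$ and $\partial_{x_1}G_a(x_1,\cdot)$ with the $L^2(0,1)$-continuity in $x_2$ of $b$ and $\partial_{x_2}b$, via Cauchy--Schwarz, shows that $u_0$, $\partial_{x_1}u_0$ and $\partial_{x_2}u_0$ are jointly continuous on $[0,1]^2$; all the constants involved depend only on $\max\{\|G_1\|_\infty,\|G_c\|_\infty,\|\partial_{x_1}G_1\|_\infty,\|\partial_{x_1}G_c\|_\infty\}$, hence are uniform in $y_2\in Y_1$, and therefore $u_0\in C^1([0,1]^2;L^\infty_{\rm per}(Y_1))$.

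The hard part will be the first step: recognising, through the explicit two-phase form of $\hat{k}_0$, that $1/\hat{k}_0$ is a smooth multiplier of at most quadratic growth, and that the hypothesis $u\in\Cinfcpt(\o)$ is precisely what promotes the a priori distribution $b$ to an $L^2$ function depending smoothly on $x_2$. The Green's-function part is routine; the only mild care needed there is the joint continuity of $\partial_{x_1}u_0$ across the diagonal of $G_a$ (handled by dominated convergence) and the uniformity in $y_2$, which is automatic because $a$ takes only the two values $1$ and $c$.
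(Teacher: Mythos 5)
Your proof is correct and follows essentially the same route as the paper: both use the additive decomposition $1/\hat{k}_0 = \tfrac{c}{c_\theta}4\pi^2\lambda_1^2 + \alpha + f$ (the paper's \eqref{four1}) together with $f\in C_0(\R)\cap L^1(\R)$ to place $b(\cdot,x_2)$ in $L^2$, then obtain continuity in $x_2$ from the smooth compactly supported structure of $u$, and finally pass through the same explicit $\sinh$-based Green's function $G_{y_2}$ to get $C^1$-regularity of $u_0$, with uniformity in $y_2$ coming for free since $a$ only takes the two values $1$ and $c$. The only cosmetic differences are that you compute $\hat{k}_0$ explicitly as a rational function and phrase the $L^2$-membership of $b$ via the Schwartz class, whereas the paper argues directly with Plancherel; and on the $u_0$-regularity step the paper additionally invokes classical Sturm--Liouville/$H^2$ regularity before the Green's-function representation, while you read everything off the kernel formula, which is a bit more self-contained.
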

 Let $u\in\Cinfcpt(\o)$. Thanks to Lemma \ref{lemma:solu0}, there exists a unique solution 
      \begin{linenomath}
      \begin{equation}
      \label{solu0SLpb}
      u_0(x_1, x_2, y_2)\in C^1([0,1]^2; \hspace{0.03cm}L^\infty_{\rm per}(Y_1))
      \end{equation}
      \end{linenomath}
to the problem \eqref{stuliopb}. 
Taking the Fourier transform $\fourtra_2$ on $L^2(\R)$ of parameter $\lambda_1$ with respect to $x_1$ of the equation in \eqref{stuliopb}  and taking into account \eqref{fourukb}, we get 
     \begin{linenomath}
     \begin{equation}
     \label{fourtra}
     \fourtra_2(u_0)(\lambda_1, x_2, y_2) = \frac{\fourtra_2(u)(\lambda_1, x_2) }{(4\pi^2a(y_2)\lambda_1^2+1)\hat{k}_0(\lambda_1)}\qquad \mbox{for}\hspace{0.3cm}(\lambda_1, x_2, y_2)\in\R\times [0,1]\times Y_1,
     \end{equation}
     \end{linenomath} 
where  $u_0(\cdot, x_2,y_2)$ and $u(\cdot, x_2)$  are extended by zero outside $(0,1)$. Integrating \eqref{fourtra} over $y_2\in Y_1$, we obtain
      \begin{linenomath}
     \begin{equation}
     \label{weaklimicounteex}
     u(x_1,x_2) = \int_{Y_1} u_0(x_1, x_2,y_2)dy_2 \qquad \mbox{for}\hspace{0.1cm}(x_1, x_2)\in\mathbb{R}\times (0,1).
     \end{equation}
     \end{linenomath}
Let $\{\ue\}_\varepsilon$ be the sequence in $L^2(\o)$ defined by
     \begin{linenomath}
     \begin{equation*}
     \label{defuesp}
     u_\varepsilon(x_1, x_2) := u_0\l(x_1, x_2,\xetwo\r).
     \end{equation*}
     \end{linenomath}
Recall that rapidly oscillating $Y_1$-periodic function $\ue$ weakly converges  in $L^2(\o)$ to the mean value of $\ue$ over $Y_1$. This combined with \eqref{weaklimicounteex} implies that  $\ue$ weakly converges in $L^2(\o)$ to $u$. In other words,
     \begin{linenomath}
     \begin{equation*}
     \ue \rightharpoonup u \quad \mbox{weakly in}\hspace{0.2cm} L^2(\o).
     \end{equation*}
     \end{linenomath}
Due to \eqref{solu0SLpb}, we can apply  \cite[Lemma 5.5]{A92} so that $u_0(x_1, x_2, y_2)$ and $\displaystyle{\partial u_0\over \partial x_1}$ are an admissible test function for the two-scale convergence. Hence,
    \begin{linenomath}
    \begin{align}
    \lim_{\varepsilon\to 0}\fe(\ue) &=\lim_{\varepsilon\to 0} \int_{\o}\l[a\l(\xetwo\r)\l(\frac{\partial u_0 }{\partial x_1} \r)^2\l(x_1, x_2,\frac{x_2}{\varepsilon}\r)  +\l|u_0\l(x_1, x_2,\xetwo\r)\r|^2 \r]dx\notag\\
    &= \int_{\o}dx\int_{Y_1}\l[ a(y_2)\l(\frac{\partial u_0}{\partial x_1} \r)^2(x_1, x_2, y_2)   +\l|u_0(x_1, x_2, y_2)\r|^2\r]dy_2\notag\\\
    &=\int_{0}^{1}dx_2\int_{Y_1}dy_2\int_{\mathbb{R}}\l[ a(y_2)\l(\frac{\partial u_0}{\partial x_1} \r)^2(x_1, x_2,y_2)+\l|u_0(x_1, x_2,y_2)\r|^2\r]dx_1, \label{limsup1}
    \end{align}
    \end{linenomath}
where the function $x_1\mapsto u_0(x_1,\cdot,\cdot)$ is extended by zero  outside $(0,1)$. In view of the definition \eqref{defk0} of $\hat{k}_0$ and due to \eqref{fourtra},  the Plancherel identity with respect to the variable $x_1$ and the Fubini theorem yield 
      \begin{linenomath}
      \begin{align*}
       \int_{0}^{1}dx_2\int_{Y_1}dy_2\int_{\mathbb{R}}&\l[ a(y_2)\l(\frac{\partial u_0}{\partial x_1} \r)^2(x_1, x_2,y_2)+\l|u_0(x_1, x_2, y_2)\r|^2\r]dx_1\notag\\
      &= \int_{0}^{1}dx_2\int_{Y_1}dy_2\int_{\mathbb{R}}(4\pi^2a(y_2)\lambda^2_1+1)|\fourtra_2(u_0)(\lambda_1, x_2,y_2)|^2d\lambda_1\notag\\
      &=\int_{0}^{1}dx_2\int_{\mathbb{R}} \frac{1}{\hat{k}_0(\lambda_1)}|\fourtra_2(u)(\lambda_1,x_2)|^2d\lambda_1.\label{limsup2}
      \end{align*} 
      \end{linenomath}
This together with \eqref{limsup1} implies that, for $u\in\Cinfcpt(\o)$,
      \begin{linenomath}
      \begin{equation*}
      \lim_{\varepsilon\to 0} \fe(\ue) = \int_{0}^{1}dx_2\int_{\mathbb{R}} \frac{1}{\hat{k}_0(\lambda_1)}|\fourtra_2(u)(\lambda_1,x_2)|^2d\lambda_1,
      \end{equation*}
      \end{linenomath}
which proves the $\Gamma$-$\limsup$ inequality on $\Cinfcpt(\o)$.
\par Now, let us extend the previous result to any $u\in H^1_0((0,1)_{x_1};\hspace{0.03cm}L^2(0,1)_{x_2})$. To this end, we use a density argument (see \textit{e.g.} \cite[Remark 2.8]{Brai06}). Recall that the weak topology of $L^2(\o)$ is metrizable on the closed balls of $L^2(\o)$.  Fix $n\in\mathbb{N}$ and denote $d_{B_n}$ any metric inducing the $L^2(\o)$-weak topology on the ball $B_n$ centered on $0$ and of radius $n$. Then,  $H^1_0((0,1)_{x_1}; L^2(0,1)_{x_2})$ can be regarded as a subspace of $L^2(\o)$ endowed with the metric $d_{B_n}$. On the other hand, $H^1_0((0,1)_{x_1}; L^2(0,1)_{x_2})$ is a Hilbert space endowed with the norm 
     \begin{linenomath}
     \begin{equation*}
     \|u\|_{H^1_0((0,1)_{x_1}; L^2(0,1)_{x_2})} :=\l( \l\|\frac{\partial u}{\partial x_1}  \r\|^2_{L^2(\o)} + \|u\|^2_{L^2(\o)}\r)^{1/2}.
     \end{equation*}
     \end{linenomath}
The associated metric $d_{H^1_0}$ on $H^1_0((0,1)_{x_1}; L^2(0,1)_{x_2})$   induces a topology which is not weaker than that induced by $d_{B_n}$, \textit{i.e.} 
      \begin{linenomath}
      \begin{equation}
      \label{weaktopology}
      d_{H^1_0}(u_k, u)\to 0\hspace{0.4cm}\mbox{implies }\hspace{0.4cm}d_{B_n}(u_k, u)\to 0.
      \end{equation}
      \end{linenomath}
Recall that $\Cinfcpt(\o)$ is a dense subspace of $H^1_0((0,1)_{x_1}; L^2(0,1)_{x_2})$ for the metric $d_{H^1_0}$ and that the $\Gamma$-$\limsup$ inequality holds on  $\Cinfcpt(\o)$ for the $L^2(\o)$-weak topology, \textit{i.e.}  for any $u\in\Cinfcpt(\o)$,
    \begin{linenomath}
    \begin{equation}
    \label{gammalimsup}
    \Gamma\text{-}\limsup_{\varepsilon\to 0}\fe(u)\leq \mathscr{F}(u).
    \end{equation}
    \end{linenomath}
A direct computation of $\hat{k}_0$, given by \eqref{defk0}, shows that 
              \begin{linenomath}
              \begin{align*}
              \hat{k}_0(\lambda_1) &= \frac{c_\theta4\pi^2\lambda_1^2+1}{(4\pi^2\lambda_1^2+ 1)(c4\pi^2\lambda_1^2+ 1)}, 
              \end{align*}
              \end{linenomath}  
        which implies  that
              \begin{linenomath}
              \begin{align}
              \label{four1}
              \frac{1}{\hat{k}_0(\lambda_1) } =\frac{c}{c_\theta}4\pi^2\lambda_1^2 + f(\lambda_1) + \alpha,
              \end{align}
              \end{linenomath}
where $f(\lambda_1)$ and $\alpha$ are given by \eqref{alphaf}. Hence, there exists a positive constant $C$ such that 
    \begin{linenomath}
    \begin{equation}
    \label{boundednessk_0}
    \frac{1}{\hat{k}_0(\lambda_1)} \leq C(4\pi^2\lambda_1^2 + 1).
    \end{equation}
    \end{linenomath} 
This combined with the Plancherel identity 
yields 
    \begin{linenomath}
    \begin{align}
    \mathscr{F}(u)&\leq C\int_{0}^{1}dx_2\int_{\R} (4\pi^2\lambda_1^2 + 1) |\fourtra_2(u)(\lambda_1, x_2)|^2d\lambda_1\notag\\
    &= C\int_{0}^{1}dx_2\int_{\R}\l[ \l( \frac{\partial u}{\partial x_1} \r)^2(x_1, x_2)+ |u(x_1, x_2)|^2\r]dx_1\notag\\
    &=C \|u\|^2_{H^1_0((0,1)_{x_1}; L^2(0,1)_{x_2})},\label{Fcontiuous}
    \end{align}
    \end{linenomath}
where $u(\cdot, x_2)$ is extended by zero outside $(0,1)$. Since $\mathscr{F}$ is a non-negative quadratic form, from \eqref{Fcontiuous} we conclude that  $\mathscr{F}$ is continuous  with respect to the metric $d_{H^1_0}$. 
\par Now, take $u\in H^1_0((0,1)_{x_1}; L^2(0,1)_{x_2})$. By density, there exists a sequence $u_k$ in $\Cinfcpt(\o)$ such that
      \begin{linenomath}
      \begin{equation}
      \label{Cinfdense}
      d_{H^1_0} (u_k, u)\to 0\qquad\mbox{as}\hspace{0.3cm} k\to\infty.
      \end{equation}
      \end{linenomath}
In particular, due to \eqref{weaktopology}, we also have that $d_{B_n} (u_k, u)\to 0$ as $k\to\infty$.
In view of the weakly lower semi-continuity of $\Gamma$-$\limsup$ and the continuity of $\mathscr{F}$, we deduce from \eqref{gammalimsup} that 
     \begin{linenomath}
     \begin{align*}
     \Gamma\text{-}\limsup_{\varepsilon\to 0}\fe(u)&\leq \liminf_{k\to\infty} (\Gamma\text{-}\limsup_{\varepsilon\to 0}\fe(u_k) )\\
     &\leq  \liminf_{k\to\infty}\mathscr{F}(u_k)\\
     &= \mathscr{F}(u),
     \end{align*}
     \end{linenomath} 
which proves the $\Gamma$-$\limsup$ inequality in $B_n$. Since for any $u\in H^1_0((0,1)_{x_1}; L^2(0,1)_{x_2})$ the sequence $u_k$ of functions in $\Cinfcpt(\o)$ satisfying \eqref{Cinfdense} belongs to some ball $B_n$ of $L^2(\o)$, as well as its limit, the $\Gamma$-$\limsup$ property holds true for the sequence $\fe$ on $H^1_0((0,1)_{x_1}; L^2(0,1)_{x_2})$, which concludes the proof of $\Gamma$-$\limsup$ inequality.

\smallskip\noindent
\noindent\textit{Step 3} - Alternative expression of $\Gamma$-limit.\\
The proof of the equality between the two expressions of the $\Gamma$-limit $\mathscr{F}$ relies on the following lemma whose proof will be given later.
     \begin{lemma}
     \label{firstlemma}
     Let $h \in L^2(\mathbb{R})$   and $u\in L^1(\mathbb{R})\cap L^2(\mathbb{R})$. Then, $h\ast u\in L^2(\R)$ and 
          \begin{linenomath}
          \begin{equation}
          \label{conprop}
          \fourtra_2 (h\ast u) = \fourtra_2(h)\fourtra_2(u)\quad \mbox{a.e. in $\mathbb{R}$}.
          \end{equation}
          \end{linenomath}
     \end{lemma}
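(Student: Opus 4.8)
The plan is to deduce the identity from the classical convolution theorem on $L^{1}(\R)$ via a truncation and density argument, exploiting that $\fourtra_{1}$ and $\fourtra_{2}$ coincide on $L^{1}(\R)\cap L^{2}(\R)$. First I would record that $h\ast u$ is well defined a.e. and lies in $L^{2}(\R)$: this is Young's convolution inequality, which for $h\in L^{2}(\R)$ and $u\in L^{1}(\R)$ gives
\begin{linenomath}
\begin{equation*}
\|h\ast u\|_{L^{2}(\R)}\leq \|h\|_{L^{2}(\R)}\,\|u\|_{L^{1}(\R)}.
\end{equation*}
\end{linenomath}

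Next I would introduce the truncations $h_{n}:=h\,\mathbf{1}_{[-n,n]}$, which belong to $L^{1}(\R)\cap L^{2}(\R)$ and satisfy $h_{n}\to h$ strongly in $L^{2}(\R)$. Applying Young's inequality to $h-h_{n}\in L^{2}(\R)$ and $u\in L^{1}(\R)$ shows that $h_{n}\ast u\to h\ast u$ strongly in $L^{2}(\R)$, hence, by Plancherel's identity, $\fourtra_{2}(h_{n}\ast u)\to\fourtra_{2}(h\ast u)$ strongly in $L^{2}(\R)$. Then for each fixed $n$, since $h_{n}$ and $u$ both lie in $L^{1}(\R)$, the product $h_{n}\ast u$ lies in $L^{1}(\R)$ and the classical convolution theorem on $L^{1}(\R)$ yields $\fourtra_{1}(h_{n}\ast u)=\fourtra_{1}(h_{n})\,\fourtra_{1}(u)$ pointwise. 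Because $h_{n}\ast u$, $h_{n}$ and $u$ all belong to $L^{1}(\R)\cap L^{2}(\R)$, the transforms $\fourtra_{1}$ and $\fourtra_{2}$ agree a.e. on each of them, so that $\fourtra_{2}(h_{n}\ast u)=\fourtra_{2}(h_{n})\,\fourtra_{2}(u)$ a.e. in $\R$.

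Finally I would pass to the limit $n\to\infty$ in this last identity. On the right-hand side, $\fourtra_{2}(u)=\fourtra_{1}(u)\in L^{\infty}(\R)$ with $\|\fourtra_{1}(u)\|_{L^{\infty}(\R)}\leq\|u\|_{L^{1}(\R)}$, while $\fourtra_{2}(h_{n})\to\fourtra_{2}(h)$ in $L^{2}(\R)$; hence $\fourtra_{2}(h_{n})\,\fourtra_{2}(u)\to\fourtra_{2}(h)\,\fourtra_{2}(u)$ in $L^{2}(\R)$. Matching this with the limit of the left-hand side obtained above gives $\fourtra_{2}(h\ast u)=\fourtra_{2}(h)\,\fourtra_{2}(u)$ a.e.\ in $\R$, which is \eqref{conprop}. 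The argument is essentially routine; the only point needing a little care is the bookkeeping of which Fourier transform ($\fourtra_{1}$ or $\fourtra_{2}$) is being used at each stage, together with the $L^{\infty}$ bound on $\fourtra_{2}(u)$ that makes the product pass to the limit in $L^{2}(\R)$.
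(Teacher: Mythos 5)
Your proof is correct, but it takes a genuinely different route from the paper's. You build the identity from the classical $L^{1}$-convolution theorem by truncating $h$ to $h_{n}=h\,\mathbf{1}_{[-n,n]}\in L^{1}(\R)\cap L^{2}(\R)$, establishing $\fourtra_{2}(h_{n}\ast u)=\fourtra_{2}(h_{n})\fourtra_{2}(u)$ for each $n$, and then passing to the limit in $L^{2}(\R)$ using Young's inequality on the one side and the uniform bound $\|\fourtra_{2}(u)\|_{L^{\infty}(\R)}\leq\|u\|_{L^{1}(\R)}$ on the other. The paper instead works on the Fourier side directly: it writes $h\ast u=\overline{\fourtra_{1}}\bigl(\fourtra_{2}(h)\fourtra_{2}(u)\bigr)$ by the convolution property of the (inverse) Fourier transform, observes via Riemann--Lebesgue that $\fourtra_{2}(u)\in C_{0}(\R)\cap L^{2}(\R)$ so that the product $\fourtra_{2}(h)\fourtra_{2}(u)$ lies in $L^{1}(\R)\cap L^{2}(\R)$, and concludes by the coincidence of $\overline{\fourtra_{1}}$ and $\overline{\fourtra_{2}}$ on that space. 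The paper's argument is shorter but quotes the convolution property on $L^{2}(\R)$ as a black box, which is close in spirit to the statement being proved; your truncation argument is more elementary and self-contained, deriving everything from the textbook $L^{1}$ convolution theorem, Plancherel, and Young's inequality, at the cost of a few extra limiting steps. Both approaches are sound, and your bookkeeping of where $\fourtra_{1}$ versus $\fourtra_{2}$ applies is accurate throughout.
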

 By applying Plancherel's identity with respect to $x_1$,  for any $u\in H^1_0(\R_{x_1}; L^2(0,1)_{x_2})$ extended by zero with respect to the variable $x_1$ outside $(0,1)$, we get
       \begin{linenomath}
       \begin{align}
       \int_{\mathbb{R}}&\l|\sqrt{\alpha}u(x_1,x_2) + (h\ast_1u)(x_1,x_2)\r|^2dx_1 \notag\\
       &=\int_{\mathbb{R}}\l|\sqrt{\alpha}\fourtra_2(u)(\lambda_1,x_2) + \fourtra_2(h\ast_1u)(\lambda_1,x_2)\r|^2d\lambda_1 \notag\\
        &=\int_{\mathbb{R}} \l[\alpha \l|\fourtra_2 (u)(\lambda_1,x_2)\r|^2  + 2\sqrt{\alpha}{\rm Re}\l(\fourtra_2(u)(\lambda_1, x_2) \overline{\fourtra_2(h\ast_1u)}(\lambda_1,x_2)\r) +\l|\fourtra_2(h\ast_1u)(\lambda_1,x_2)\r|^2\r]d\lambda_1.\label{counex1}
       \end{align}
       \end{linenomath} 
  Recall that the Fourier transform of $h$, given by \eqref{four2}, is real. From \eqref{counex1},  an application of Lemma \ref{firstlemma}  leads us to
       \begin{linenomath}
       \begin{align}
       \int_{\mathbb{R}} &\l[\alpha \l|\fourtra_2 (u)(\lambda_1,x_2)\r|^2  + 2\sqrt{\alpha}{\rm Re}\l(\fourtra_2(u)(\lambda_1, x_2) \overline{\fourtra_2(h\ast_1u)}(\lambda_1,x_2)\r) +\l|\fourtra_2(h\ast_1u)(\lambda_1,x_2)\r|^2\r]d\lambda_1\notag\\
       &= \int_{\mathbb{R}} \l[\alpha+2\sqrt{\alpha}\fourtra_2(h)(\lambda_1) + \l(\fourtra_2(h)(\lambda_1)\r)^2\r]\l|\fourtra_2(u)(\lambda_1,x_2)\r|^2d\lambda_1\notag\\
       &= \int_{\mathbb{R}} \l[\sqrt{\alpha}+ \fourtra_2(h)(\lambda_1)\r]^2|\fourtra_2(u)(\lambda_1,x_2)|^2d\lambda_1\notag\\
       &= \int_{\mathbb{R}} \l[\alpha+ f(\lambda_1)\r]|\fourtra_2(u)(\lambda_1,x_2)|^2d\lambda_1. \label{V7}
       \end{align}
       \end{linenomath}
On the other hand, by applying  Plancherel's identity with respect to $x_1$, we obtain 
    \begin{linenomath}
    \begin{equation*}
    \int_{\R}\frac{c}{c_\theta}4\pi^2\lambda_1^2|\fourtra_2(u)(\lambda_1,x_2)|^2d\lambda_1 = \int_{\R}\frac{c}{c_\theta}\l(\frac{\partial u}{\partial x_1}\r)^2(x_1,x_2)dx_1.
    \end{equation*}
    \end{linenomath}
In view of the expansion of $1/\hat{k}_0(\lambda_1)$ given by \eqref{four1}, the previous equality combined with \eqref{counex1} and \eqref{V7} implies that, for $u\in H^1_0((0,1)_{x_1};\hspace{0.1cm}L^2(0,1)_{x_2})$ extended by zero with respect to $x_1$ outside $(0,1)$,
     \begin{linenomath}
     \begin{align*}
     \int_{0}^{1}dx_2\int_{\mathbb{R}}& \frac{1}{\hat{k}_0(\lambda_1)} |\fourtra_2(u)(\lambda_1,x_2)|^2d\lambda_1 \\
     &=\int_{0}^{1}dx_2 \int_{\mathbb{R}}\l\{\frac{c}{c_\theta}\l(\frac{\partial u}{\partial x_1}\r)^2(x_1,x_2)+[\sqrt{\alpha}u(x_1,x_2) + (h\ast_1u)(x_1,x_2)]^2\r\}dx_1,
     \end{align*}
     \end{linenomath}
which concludes the proof.
\end{proof}

    \begin{proof}[Proof of Lemma \ref{lemma:solu0}]
    In view of  \eqref{four1}, the equality \eqref{fourukb} becomes
        \begin{linenomath}
        \begin{align}
        \fourtra_2(b)(\lambda_1, x_2 ) &= \l(\frac{c}{c_\theta}4\pi^2\lambda_1^2+\alpha + f(\lambda_1)\r)\fourtra_2(u)(\lambda_1, x_2)\notag\\
        &= \fourtra_2 \l(-\frac{c}{c_\theta}\frac{\partial^2 u}{\partial x_1^2} +\alpha u\r)(\lambda_1, x_2) + f(\lambda_1)\fourtra_2(u)(\lambda_1,x_2).\label{fourblemma}
        \end{align}
        \end{linenomath}
        Since
              \begin{linenomath}
              \begin{equation*}
              \label{fCL1}
              f(\lambda_1)= \frac{(c-1)^2\theta(\theta-1)}{c^2_\theta}\frac{1}{c_\theta4\pi^2\lambda_1^2 + 1}= O(\lambda_1^{-2})\in C_0(\mathbb{R})\cap L^1(\mathbb{R}),
              \end{equation*}
              \end{linenomath}
         the right-hand side of \eqref{fourblemma} belongs to $L^2(\R)$ with respect to $\lambda_1$, which implies that
         \begin{linenomath}
         \begin{equation*}
         \fourtra_2(b)(\cdot, x_2)\in L^2(\R).
         \end{equation*}
         \end{linenomath}
    Applying the Plancherel identity, we obtain that $b(\cdot, x_2)\in L^2(\R)$ with respect to $x_1$. Since $u(\cdot, x_2)$ is extended by zero outside $(0,1)$, $b(\cdot, x_2)$ is also equal to zero outside $(0,1)$ so that
            \begin{linenomath}
            \begin{equation}
            \label{bL2}
            b(\cdot, x_2)\in L^2(0,1).
            \end{equation}
            \end{linenomath}
     \par Let us show that $b(x_1,\cdot)$ is a continuous function with respect to $x_2\in [0,1]$. Recall that the continuity of $x_2\in [0,1]\mapsto b(x_1, x_2) \in L^2(0,1)_{x_1}$ is equivalent to 
           \begin{linenomath}
           \begin{equation*}
           \lim_{t\to 0} \|b(\cdot, x_2+t) - b(\cdot, x_2)\|_{L^2(0,1)_{x_1}} =0.
           \end{equation*}
           \end{linenomath}
    Thanks to Plancherel's identity, we infer from \eqref{fourukb} that
        \begin{linenomath}
        \begin{align*}
        \|b(\cdot, x_2+t) - b(\cdot, x_2)\|^2_{L^2(0,1)_{x_1}} &= \|\fourtra_2(b)(\cdot, x_2+t) - \fourtra_2(b)(\cdot, x_2)\|^2_{L^2(\R)_{\lambda_1}}\\
        &=\int_{\R}\l|\frac{1}{\hat{k}_0(\lambda_1)}\l[\fourtra_2(u)(\lambda_1, x_2+t) - \fourtra_2(u)(\lambda_1, x_2)\r] \r|^2d\lambda_1.
        \end{align*}
        \end{linenomath}
    In view of\eqref{boundednessk_0} and thanks to the Plancherel identity, we obtain  
       \begin{linenomath}
       \begin{align}
       \|b(\cdot, x_2+t) - b(\cdot, x_2)\|^2_{L^2(0,1)_{x_1}} &\leq C^2
       \int_{\R}\l|(4\pi^2\lambda_1^2+1) (\fourtra_2(u)(\lambda_1, x_2+t) - \fourtra_2(u)(\lambda_1, x_2)) 
       \r|^2d\lambda_1\notag\\
       &\leq C^2\l\|\fourtra_2 \l(\frac{\partial u}{\partial x_1}\r) (\cdot, x_2+t ) -\fourtra_2\l(\frac{\partial u}{\partial x_1}\r) (\cdot, x_2 )  \r\|^2_{L^2(0,1)_{\lambda_1}} \notag\\
       &\quad+ C^2 \|\fourtra_2(u)(\cdot, x_2+t)-\fourtra_2(u)(\cdot, x_2)\|^2_{L^2(0,1)_{\lambda_1}} \notag\\
       &= C^2\l\|\frac{\partial u}{\partial x_1}(\cdot, x_2+t ) -\frac{\partial u}{\partial x_1} (\cdot, x_2 )  \r\|^2_{L^2(0,1)_{x_1}}\notag \\
       &\quad +C^2  \|u(\cdot, x_2+t)-u(\cdot, x_2)\|^2_{L^2(0,1)_{x_1}}. \notag
       \end{align}
       \end{linenomath}
     By the Lebesgue dominated convergence theorem and since $u\in\Cinfcpt([0,1]^2)$, from the previous inequality we conclude that the map $x_2\in [0,1]\mapsto b(x_1, x_2)\in L^2(0,1)_{x_1}$ is continuous. Hence, 
        \begin{linenomath}
        \begin{equation}
        \label{b}
        b(x_1, x_2)\in C([0,1]_{x_2}; \hspace{0.03cm} L^2(0,1)_{x_1}).
        \end{equation}
        \end{linenomath}
    \par To conclude the proof, it remains to show the regularity of $u_0$. Note that \eqref{stuliopb} is a Sturm-Liouville problem with constant coefficient with respect to $x_1$, since $x_2\in (0,1)$ and $y_2\in Y_1$ play the role of parameters. By \eqref{bL2}, we already know that $b(\cdot, x_2)\in L^2(0,1)$, so that thanks to a classical regularity result (see \textit{e.g.} \cite{Bre10} pp. 223-224), the problem \eqref{stuliopb} admits a unique solution $u_0(\cdot, x_2, y_2)$ in $H^2(0,1)$. Since $H^2(0,1)$ is embedding into $C^1([0,1])$, 
     we have 
         \begin{linenomath}
         \begin{equation*}
         u_0(\cdot, x_2, y_2)\in C^1([0,1])\qquad\mbox{a.e.}\hspace{0.3cm} (x_2, y_2)\in (0,1)\times Y_1. 
         \end{equation*}
         \end{linenomath}
      On the other hand, the solution $u_0(x_1, x_2, y_2)$ to the Sturm-Liouville problem \eqref{stuliopb} is explicitly given by  
         \begin{linenomath}
         \begin{equation}
         \label{explu0}
         u_0(x_1, x_2, y_2) :=\int_{0}^{1} G_{y_2}(x_1, s) b(s, x_2)ds,
         \end{equation}
         \end{linenomath}
    where $b(x_1, x_2)$ is defined by \eqref{fourukb} and \eqref{b} and  the kernel $G_{y_2}(x_1, s)$ is given by 
         \begin{linenomath}
         \begin{equation*}
         G_{y_2}(x_1, s) :=\frac{1}{\sqrt{a(y_2)}\sinh\l(\frac{1}{\sqrt{a(y_2)}}\r) }\sinh\l(\frac{x_1\wedge s}{\sqrt{a(y_2)}}\r)\sinh\l(\frac{1-x_1\vee s}{\sqrt{a(y_2)}}\r). 
         \end{equation*}
         \end{linenomath}
    This combined with \eqref{b} and \eqref{explu0} proves that  
         \begin{linenomath}
         \begin{equation*}
         \label{u0isregular}
         u_0(x_1, x_2, y_2) \in C^1([0,1]^2, L^\infty_{\rm per}(Y_1)),
         \end{equation*}
         \end{linenomath}
    which concludes the proof.
    \end{proof}
We prove now the Lemma \ref{firstlemma} that we used in Step $3$ of Proposition \ref{prop1}.    
     \begin{proof}[Proof of Lemma \ref{firstlemma}]
     By the convolution property of the Fourier transform on $L^2(\R)$, we have 
            \begin{linenomath}
            \begin{equation}
            \label{fou1}
            h\ast u  =  \overline{\fourtra_2}(\fourtra_2(h)) \ast \overline{\fourtra_2}(\fourtra_2(h)) =  \overline{\fourtra_1}(\fourtra_2(h)\fourtra_2(u)),
            \end{equation}
            \end{linenomath}
     where $\overline{\fourtra_i}$ denotes the conjugate Fourier transform for $i=1,2$.
     On the other hand, since $u\in L^1(\mathbb{R})\cap L^2(\R)$ and due to Riemann-Lebesgue's lemma , we deduce that $\fourtra_2(u)=\fourtra_1(u)\in C_0(\mathbb{R})\cap L^2(\R)$. This combined with $\fourtra_2(h)\in L^2(\R)$ implies that
         \begin{linenomath}
         \begin{equation*}
         \fourtra_2(h)\fourtra_2(u)=\fourtra_2(h)\fourtra_1(u)\in L^2(\mathbb{R})\cap L^1(\R).
         \end{equation*}
         \end{linenomath}
       Since $\overline{\fourtra_1} = \overline{\fourtra_2}$  on $L^1(\R)\cap L^2(\R)$, from  \eqref{fou1}  we deduce that 
          \begin{linenomath}
          \begin{equation*}
          h\ast u= \overline{\fourtra_2}(\fourtra_2(h)\fourtra_2(u))\in L^2(\mathbb{R}),
          \end{equation*}
          \end{linenomath}
     which yields  \eqref{conprop}. This concludes the proof.
     \end{proof}   
    \begin{remark}
    \label{rmk:BDrepresentation} 
    Thanks to the Beurling-Deny theory of Dirichlet forms {\rm   \cite{BD58}},  Mosco {\rm  \cite[Theorem 4.1.2]{M94} has proved that} the $\Gamma$-limit $F$ of a family of Markovian form for the $L^2(\o)$-strong topology is a Dirichlet form which can be split into a sum of three forms: a strongly local form $F_d$, a local form and nonlocal one. More precisely, for $u\in L^2(\o)$ with $F(u)<\infty$, we have
         \begin{linenomath}
         \begin{equation}
         \label{BDformula}
         F(u) = F_d(u) + \int_{\o}u^2k(dx) + \int_{(\o\times\o)\setminus{\rm diag}} (u(x)-u(y))^2j(dx,dy),
         \end{equation}
         \end{linenomath}
    where  $F_d$ is called the diffusion part of $F$, $k$ is a positive Radon measure on $\o$, called the killing measure, and $j$ is a positive Radon measure on $(\o\times\o)\setminus{\rm diag}$, called the jumping measure.  Recall that a Dirichlet form $F$ is a closed form which satisfies the Markovian property, {\it i.e.} for any  contraction $T:\R\to\R$, such that 
        \begin{equation*}
        T(0)=0, \qquad\mbox{and}\qquad \forall x,y\in\R, \hspace{0.3cm} |T(x)-T(y)|\leq |x-y|,
        \end{equation*}
     we have $F\circ T\leq F$. A $\Gamma$-limit form obtained with the $L^2(\o)$-weak topology does not \textit{a priori} satisfy the Markovian property, since the $L^2(\o)$-weak convergence does not commute with all contractions $T$. An example of a sequence of Markovian forms whose $\Gamma$-limit for the $L^2(\o)$-weak topology does not satisfy the Markovian property is provided in {\rm \cite[Theorem 3.1]{B03}}.
     Hence, the representation formula \eqref{BDformula} does not hold in general when the $L^2(\o)$-strong topology is replaced by the $L^2(\o)$-weak topology. 
    \par In the present context,  we do not know if the $\Gamma$-limit $\mathscr{F}$  \eqref{funccontex} is a Dirichlet form since the presence of the convolution term makes difficult to prove the Markovian property.
    \end{remark}

\appendix \section{Homogenized formula for a rank-one laminate}
We are going to give an explicit expression of the homogenized matrix $A^\ast$ defined by \eqref{hommat}, which extends the rank-one laminate formula 
in the case of a rank-one laminates  with degenerate phases.  We will recover directly from this expression the positive definiteness of $A^*$ for the class of rank-one laminates introduced in Section 3. Indeed, by virtue of Theorem 2.1 the positive definiteness of $A^*$ also follows from assumption (H2) which is established in Proposition 3.1 and Proposition 3.2.\\
Set
           \begin{linenomath}
           \begin{equation}
           \label{a}
            a:= (1-\theta) A_1 e_1\cdot e_1 + \theta A_2e_1\cdot e_1,
            \end{equation}
            \end{linenomath}
with $\theta\in (0,1)$ being the volume fraction of phase $Z_1$.
       \begin{prop}
       \label{prop:Appendix}
       Let $A_1$ and $A_2$ be two symmetric and non-negative matrices of $\R^{d\times d}$, $d\geq 2$. If  $a$ given by \eqref{a} is positive, the homogenized matrix $A^\ast$ is given by    
              \begin{equation}
              \label{hommatrlamform}
               A^\ast= \theta A_1+(1-\theta)A_2 -\frac{\theta(1-\theta)}{a} (A_2-A_1)e_1\otimes (A_2-A_1)e_1.
               \end{equation}
       If $a=0$, the homogenized matrix $A^\ast$ is the arithmetic average of the matrices $A_1$ and $A_2$, \textit{i.e.} 
                  \begin{equation}
                  \label{hommatrlamformazero}
                  A^\ast = \theta A_1+(1-\theta)A_2.
                  \end{equation}
        Furthermore, if one of the following conditions is satisfied:
          \begin{itemize}
          \item[i) ]  in two dimensions, $a>0$ and the matrices $A_1$ and $A_2$ are given by \eqref{phasesdimtwo} 
          with  $\xi\cdot e_1\neq 0$,   
          \item[ii) ]  in three dimensions, $a>0$, the matrices $A_1$ and $A_2$ are given by \eqref{kernranktwo} and the vectors $\{e_1, \eta_1, \eta_2\}$ are independent in $\R^3$,
          \end{itemize}
        then $A^\ast$ is positive definite.
       \end{prop}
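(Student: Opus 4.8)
The plan is to compute the minimum in formula \eqref{hommat} directly, exploiting the laminate structure. For a rank-one laminate in direction $e_1$, the minimizing corrector $v\in\honeper(Y_d)$ depends only on $y_1$, so $\grad v = v'(y_1)e_1$ and $v'$ is a $1$-periodic scalar function with zero mean. Hence $A^\ast\lambda\cdot\lambda = \inf\{\int_0^1 A(y_1)(\lambda + t(y_1)e_1)\cdot(\lambda+t(y_1)e_1)\,dy_1 : t\in L^2(0,1),\ \int_0^1 t=0\}$. First I would expand the integrand, using $A(y_1) = \chi A_1 + (1-\chi)A_2$, and recognize that, writing $t=t_1$ on $Z_1$ and $t=t_2$ on $Z_2$ (constants by convexity, since on each phase the integrand is a quadratic form in $t$ with the constraint only coupling the averages), the problem reduces to a finite-dimensional minimization over $(t_1,t_2)\in\R^2$ subject to $\theta t_1 + (1-\theta)t_2 = 0$. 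The quadratic to minimize is $\theta\, A_1(\lambda+t_1 e_1)\cdot(\lambda+t_1 e_1) + (1-\theta)\,A_2(\lambda+t_2 e_1)\cdot(\lambda+t_2 e_1)$.

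Next I would carry out the minimization. Substituting the constraint $t_2 = -\tfrac{\theta}{1-\theta}t_1$ and writing $s:=t_1$, the objective becomes a scalar quadratic $\alpha s^2 + 2\beta s + \gamma$ whose leading coefficient is precisely $\theta(1-\theta)^{-1}\cdot[(1-\theta)A_1e_1\cdot e_1 + \theta A_2 e_1\cdot e_1]\cdot(1-\theta)^{-1}$ — up to bookkeeping this is $\tfrac{\theta}{1-\theta}\cdot\tfrac{1}{1-\theta}\cdot$ nothing; more cleanly, a direct computation shows the coefficient of $s^2$ equals $\tfrac{\theta}{(1-\theta)}\,a\cdot$(a constant), where $a$ is the quantity in \eqref{a}. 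When $a>0$ the quadratic is strictly convex, the unique minimizer is $s^\ast = -\beta/\alpha$, and plugging back and simplifying gives exactly \eqref{hommatrlamform}: the arithmetic mean $\theta A_1 + (1-\theta)A_2$ minus the rank-one correction $\tfrac{\theta(1-\theta)}{a}(A_2-A_1)e_1\otimes(A_2-A_1)e_1$. When $a=0$, the coefficient of $s^2$ vanishes; I would check that the linear coefficient $\beta$ also vanishes (because $a=0$ forces $A_ie_1\cdot e_1 = 0$, hence $A_i e_1 = 0$ by non-negativity, so $(A_2-A_1)e_1=0$), so the objective is constant in $s$ and the infimum is attained at $s=0$, giving \eqref{hommatrlamformazero}. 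This last observation — that $a=0$ with $A_i\ge 0$ forces $A_ie_1 = 0$ for $i=1,2$ — is the key algebraic fact making the $a=0$ case clean.

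For the positive definiteness claims, I would argue as follows. Since $A^\ast$ is non-negative, it suffices to show $\ker A^\ast = \{0\}$. Suppose $A^\ast\lambda\cdot\lambda = 0$. In the two-dimensional case \eqref{phasesdimtwo}, $A_1 = \xi\otimes\xi$, $A_2$ positive definite, and $a = (1-\theta)(\xi\cdot e_1)^2 + \theta A_2e_1\cdot e_1 > 0$. From \eqref{hommatrlamform}, $0 = A^\ast\lambda\cdot\lambda = \theta(\xi\cdot\lambda)^2 + (1-\theta)A_2\lambda\cdot\lambda - \tfrac{\theta(1-\theta)}{a}\big((A_2-A_1)e_1\cdot\lambda\big)^2$; I would show this forces $A_2\lambda\cdot\lambda = 0$ (by estimating the negative term using Cauchy–Schwarz relative to $A_2$, or more directly by invoking the representation as a genuine minimum over correctors: $A^\ast\lambda\cdot\lambda \ge (1-\theta)A_2(\lambda + t_2 e_1)\cdot(\lambda+t_2 e_1)\ge 0$ with equality only if $\lambda + t_2 e_1 = 0$, using $A_2 > 0$), hence $\lambda\in\mathrm{Span}(e_1)$, and then the $Z_1$ contribution $\theta(\xi\cdot\lambda)^2 = \theta\lambda_1^2(\xi\cdot e_1)^2$ forces $\lambda_1 = 0$ since $\xi\cdot e_1\ne 0$; thus $\lambda = 0$. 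In the three-dimensional case \eqref{kernranktwo}, the same corrector-based lower bound gives $\lambda + t_1 e_1\in\ker A_1 = \mathrm{Span}(\eta_1)$ and $\lambda + t_2 e_1\in\ker A_2 = \mathrm{Span}(\eta_2)$, so $\lambda\in(\mathrm{Span}(e_1)+\mathrm{Span}(\eta_1))\cap(\mathrm{Span}(e_1)+\mathrm{Span}(\eta_2))$; I would then use the linear independence of $\{e_1,\eta_1,\eta_2\}$ to conclude this intersection is $\mathrm{Span}(e_1)$, and finally rule out $\lambda$ proportional to $e_1$ using $t_1, t_2$ determined by the constraint $\theta t_1 + (1-\theta)t_2 = 0$ together with $e_1\notin\mathrm{Span}(\eta_i)$. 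The main obstacle I anticipate is bookkeeping: correctly tracking the scalar coefficients $\alpha,\beta,\gamma$ through the substitution so that the rank-one correction term comes out with exactly the coefficient $\tfrac{\theta(1-\theta)}{a}$, and making the corrector-based lower bound argument rigorous (justifying that the infimum over $t\in L^2$ with zero mean is attained, or at least approached, by piecewise-constant $t$, which follows from Jensen's inequality applied phase by phase since the integrand is convex in $t$).
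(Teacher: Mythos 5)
Your proposal takes a genuinely different route from the paper's. The paper derives \eqref{hommatrlamform} by a perturbation argument: it sets $A_\delta := A + \delta I_d$, invokes the classical Tartar laminate formula for the (positive definite) $A_\delta$, and then passes to the limit $\delta\to 0$ using the convergence $A^\ast_\delta \to A^\ast$ already established in \eqref{AdeltatendA} during the proof of Theorem~\ref{genthm}; the positive definiteness is then proved by Cauchy--Schwarz applied to $|(A_2-A_1)e_1\cdot x|$, which after an algebraic manipulation turns the lamination formula into a manifest square and makes the equality case analyzable. You instead compute the cell infimum directly, reducing it to a finite-dimensional quadratic minimization in $(t_1,t_2)$ with the constraint $\theta t_1 + (1-\theta)t_2 = 0$, and read off positive definiteness from the fact that at a minimizer both phase contributions vanish. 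Your approach is more self-contained (it does not rely on the pre-established $\delta$-limit), while the paper's avoids the delicate reduction issues in the degenerate case.

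A few gaps to tighten in your version. First, the reduction from general $v\in H^1_{\rm per}(Y_d)$ to $v=v(y_1)$ needs an argument: for degenerate $A$ there may be no minimizer, so "the minimizing corrector depends only on $y_1$" is loose. The correct statement is that replacing $v$ by its average in $y'=(y_2,\dots,y_d)$ does not increase the energy, which follows from Jensen applied in the transverse variables (periodicity kills the transverse averaged derivatives); you invoke Jensen but for the wrong step (reducing $t(y_1)$ to piecewise constants, which is the same idea but a later reduction). Second, your positive-definiteness argument in case (i) is slightly garbled: after concluding $\lambda = -t_2^\ast e_1 \in \mathrm{Span}(e_1)$, the vanishing $Z_1$ contribution at the minimizer is $\theta\bigl(\xi\cdot(\lambda + t_1^\ast e_1)\bigr)^2$, not $\theta(\xi\cdot\lambda)^2$; one should use $\xi\cdot e_1\neq 0$ to conclude $t_1^\ast = t_2^\ast$, then the constraint forces $t_1^\ast=t_2^\ast=0$ and $\lambda=0$. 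The structure is right but the displayed expression is not the one that actually vanishes. With those two points repaired, your proof is correct.
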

   \begin{remark}
   The condition $a>0$ agrees with the $\Gamma$-convergence results of Propositions \ref{propd2}  and \ref{thmcasematrixdege}. In the two-dimensional framework, the degenerate case $a=0$ does not agree with Propositions \ref{propd2}. Indeed, $a=0$ implies that $A_1e_1\cdot e_1= A_2 e_1\cdot e_1=0$ in contradiction to positive definiteness of $A_2$. Similar in the three-dimensional setting, where the independence of  $\{e_1, \eta_1, \eta_2 \}$ is not compatible with $a=0$. Indeed, $a=0$ implies that $A_i e_1 =A_i \eta_i=0$, for $i=1,2$, which contradicts the fact that $A_1$ and $A_2$ have rank two.
    \end{remark}
     \begin{proof}
     Assume that $a>0$. In view of the convergence \eqref{AdeltatendA}, we already know that
          \begin{equation}
          \label{convmatrApp}
          \lim_{\delta\to 0} A^\ast_\delta = A^\ast,
          \end{equation}
     where, for $\delta>0$, $A^\ast_\delta$ is the homogenized matrix associated to  conductivity matrix $A_\delta$ given by 
          \begin{linenomath}
          \begin{equation*}
          A_\delta(y_1) = \chi(y_1) A_1^\delta + (1-\chi(y_1)) A_2 ^\delta\qquad\mbox{for}\hspace{0.2cm} y_1\in\R,
          \end{equation*}
          \end{linenomath}
     with $A_i^\delta = A_i +\delta I_d$. Since $A_1$ and $A_2$ are non-negative matrices, $A_\delta$ is positive definite and thus the homogenized matrix $A^\ast_\delta$ is given by the lamination formula (see \cite{T85} and also \cite[Lemma 1.3.32]{A12})
         \begin{equation}
         \label{lamformu}
         A^\ast_\delta= \theta A_1^\delta+(1-\theta)A_2^\delta -\frac{\theta(1-\theta)}{(1-\theta)A_1^\delta e_1\cdot e_1 + \theta A_2^\delta e_1\cdot e_1 } (A_2^\delta-A_1^\delta)e_1\otimes (A_2^\delta-A_1^\delta)e_1.
         \end{equation}   
      If $a>0$, we easily infer from the convergence \eqref{convmatrApp} combined with the lamination formula \eqref{lamformu} the expression \eqref{hommatrlamform}  for $A^\ast$.
     \par Let us prove that $A^\ast x\cdot x\geq 0$ for any $x\in\rd$. From the Cauchy-Schwarz inequality, we deduce that 
                     \begin{linenomath}
                    \begin{align}
                    \label{CSine}
                    |(A_2-A_1)e_1\cdot x| &\leq |A_2e_1\cdot x|+ |A_1e_1\cdot x|\notag\\
                    &\leq (A_2e_1\cdot e_1)^{1/2}(A_2x\cdot x)^{1/2} + (A_1e_1\cdot e_1)^{1/2}(A_1x\cdot x)^{1/2}.
                    \end{align}
                    \end{linenomath}
      This combined with  the definition \eqref{hommatrlamform} of $A^\ast$ implies that, for any $x\in\rd$,
                  \begin{linenomath}
                  \begin{align}
                  A^\ast x\cdot x&=\theta (A_1 x\cdot x) + (1-\theta)(A_2 x\cdot x) - \theta(1-\theta)a^{-1}\l|(A_2-A_1)e_1\cdot x\r|^2\notag\\
                 &\geq \theta (A_1x\cdot x) +(1-\theta) (A_2 x\cdot x)\notag\\ &\quad-\theta(1-\theta)a^{-1}[(A_2e_1\cdot e_1)^{1/2} (A_2x\cdot x)^{1/2}+ (A_1e_1\cdot e_1)^{1/2} (A_1x\cdot x)^{1/2}  ]^2\notag\\
                 &=  a^{-1} [a \theta (A_1x\cdot x) +a (1-\theta) (A_2 x\cdot x) -\theta(1-\theta)(A_2e_1\cdot e_1)( A_2x\cdot x)\notag\\
                 &\hspace{0.9cm}  -\theta(1-\theta)(A_1e_1\cdot e_1)(A_1x\cdot x)- 2\theta(1-\theta)(A_2e_1\cdot e_1)^{1/2}( A_2x\cdot x)^{1/2}(A_1e_1\cdot e_1)^{1/2}(A_1x\cdot x)^{1/2}].\label{eii}
                  \end{align}
                  \end{linenomath}
      In view of definition \eqref{a} of $a$, we have that
         \begin{linenomath}
         \begin{align}
         a \theta( A_1x\cdot x) +a (1-\theta)( A_2 x\cdot x)&= \theta (1-\theta)(A_1 e_1\cdot e_1)(A_1 x\cdot x) + \theta^2(A_2 e_1\cdot e_1)(A_1 x\cdot x)\notag\\
                 &\quad+ (1-\theta)^2
                 (A_1 e_1\cdot e_1)(A_2 x\cdot x) + \theta(1-\theta)(A_2 e_1\cdot e_1)(A_2 x\cdot x).\notag  
         \end{align}
               \end{linenomath}
      Plugging this equality in \eqref{eii}, we deduce that
             \begin{linenomath}
             \begin{align}
             A^\ast x\cdot x &\geq a^{-1}[\theta^2(A_2 e_1\cdot e_1)(A_1 x\cdot x)+ (1-\theta)^2(A_1 e_1\cdot e_1)(A_2 x\cdot x)\notag\\
             &\hspace{1cm} -2\theta(1-\theta)(A_2e_1\cdot e_1)^{1/2}(A_1x\cdot x)^{1/2}(A_1e_1\cdot e_1)^{1/2}( A_2x\cdot x)^{1/2}]\notag\\
             &=a^{-1}[\theta (A_2 e_1\cdot e_1)^{1/2}(A_1x\cdot x)^{1/2} -(1-\theta) (A_1 e_1\cdot e_1)^{1/2}(A_2x\cdot x)^{1/2}  ]^2\geq 0,\label{eiii}
             \end{align}
             \end{linenomath}
      which proves that $A^\ast$ is a non-negative definite matrix.
      \par Now, assume $a=0$. Since $A_1$ and $A_2$ are non-negative matrices, the condition $a=0$ implies  $A_1e_1\cdot e_1 = A_2e_1\cdot e_1=0$ or equivalently  $A_1e_1=A_2e_1=0$. Hence,
             \begin{linenomath}
             \begin{equation*}
             (A_2^\delta -A_1^\delta)e_1 = (A_2-A_1)e_1 =0,
             \end{equation*}
             \end{linenomath}
       which implies that the lamination formula \eqref{lamformu} becomes
         \begin{linenomath}
         \begin{equation*}
         A^\ast_\delta = \theta A_1^\delta + (1-\theta)A_2^\delta.
         \end{equation*}
         \end{linenomath}
     This combined with the convergence \eqref{convmatrApp} yields to the expression
     \eqref{hommatrlamformazero} for $A^\ast$. 
\par To conclude the proof, it remains to prove the positive definiteness of $A^\ast$ under the above conditions i) and ii). 

\smallskip\noindent
Case (i): $d=2$, $a>0$ and $A_1, A_2$ given by \eqref{phasesdimtwo}.\\ Assume $A^* x\cdot x=0$. Then, the inequality \eqref{eiii} is an equality, which yields in turn  equalities in \eqref{CSine}. In particular, we have
        \begin{linenomath}
        \begin{equation}
        \label{A2e1xposdef}
        |A_2e_1\cdot x| = (A_2e_1\cdot e_1)^{1/2}(A_2x\cdot x)^{1/2} = \|A^{1/2}_2e_1\|\|A^{1/2}_2x\|.
        \end{equation}
        \end{linenomath} 
Recall that the Cauchy-Schwarz inequality is an equality if and only if one of vectors is a scalar multiple of the other.
This combined with \eqref{A2e1xposdef} leads to   $A^{1/2}_2x=\alpha A^{1/2}_2e_1$ for some $\alpha\in\R$, so that,  since $A_2$ is positive definite or equivalently $A^{1/2}_2$, we have 
   \begin{equation}
   \label{xe1}
   x=\alpha e_1 \qquad\mbox{for some}\hspace{0.1cm}\alpha\in\R.
   \end{equation}
From the definition \eqref{hommatrlamform} of $A^\ast$ and due to the assumption  $\xi\cdot e_1\neq 0$, we get
   \begin{equation}
   \label{Ae1e1}
   A^\ast e_1\cdot e_1= \frac{1}{a}(A_2 e_1\cdot e_1) (\xi\cdot e_1)^2 >0.
   \end{equation}
Recall that $A^\ast x\cdot x=0$.  This combined with  \eqref{xe1} and \eqref{Ae1e1} implies that $x=0$, which proves that $A^\ast$ is positive definite.  


\smallskip\noindent
Case (ii): $d=3$, $a>0$ and $A_1, A_2$ given by \eqref{kernranktwo}. \\Assume that $A^\ast x\cdot x=0$. As in Case (i), we have  equalities in \eqref{CSine}. In other words,
    \begin{linenomath}
   	\begin{align}
   	|A_1e_1\cdot x| &= (A_1e_1\cdot e_1)^{1/2}(A_1x\cdot x)^{1/2},\label{A1e1x}\\
   	|A_2e_1\cdot x| &= (A_2e_1\cdot e_1)^{1/2}(A_2x\cdot x)^{1/2}.\label{A2e1x}
   	\end{align}
   	\end{linenomath}
Let $p_i(t)$ be the non-negative polynomials of degree $2$ defined by 
     \begin{linenomath}
     \begin{equation*}
     p_i(t) := A_i(x+te_1)\cdot (x+te_1) \qquad\mbox{for}\hspace{0.1cm} i=1,2.
     \end{equation*}
     \end{linenomath}
In view of \eqref{A1e1x}, the discriminant of $p_1(t)$ is zero, so that there exists $t_1\in\R$ such that 
     \begin{equation}
     \label{poly}
     p_1(t_1) = A_1(x+t_1e_1)\cdot (x+t_1e_1)=0.
     \end{equation}
Recall that $\ker (A_1)={\rm Span}(\eta_1)$. Since $A_1$ is non-negative matrix, we deduce from \eqref{poly} that $x+t_1e_1$ belongs to $\ker (A_1)$, so that
     \begin{equation}
     \label{spane1eta1}
     x\in{\rm Span}(e_1, \eta_1).
     \end{equation}
Similarly,  recalling that $\ker (A_2)={\rm Span}(\eta_2)$ and  using \eqref{A2e1x}, we have
     \begin{equation}
     \label{spane1eta2}
     x\in{\rm Span}(e_1, \eta_2).
     \end{equation}
Since the vectors $\{e_1, \eta_1, \eta_2\}$ are independent in $\R^3$, \eqref{spane1eta1} and \eqref{spane1eta2} imply that 
      \begin{linenomath}
      \begin{equation*}
      x=\alpha e_1 \qquad\mbox{for some}\hspace{0.2cm}\alpha\in\R.
      \end{equation*}
      \end{linenomath}
In light of definition \eqref{hommatrlamform} of $A^\ast$, we have 
    \begin{linenomath}
    \begin{equation*}
    A^\ast e_1\cdot e_1 = \frac{1}{a} (A_1e_1\cdot e_1)(A_2e_1\cdot e_1)>0,
    \end{equation*}  
    \end{linenomath} 
which implies that $x=0$, since $A^\ast x\cdot x=0$.  This establishes that $A^\ast$ is positive definite and concludes the proof.
   \end{proof}
Note that when $d=2$ and $a>0$ the assumption $\xi\cdot e_1\neq 0$ is essential to obtain that $A^\ast$ is positive definite. Otherwise, the homogenized matrix $A^\ast$ is just non-negative definite as shown by the following counter-example. Let $A_1$ and $A_2$ be symmetric and non-negative matrices of $\R^{2\times 2}$ defined by
      \begin{linenomath}
      \begin{equation*}
      A_1=e_2\otimes e_2 \quad\mbox{and}\quad A_2 = I_2.
      \end{equation*}
      \end{linenomath}
Then, it is easy to check that $a=\theta>0$ and $A^\ast e_1\cdot e_1=0$.

\subsection*{Acknowledgments}
This problem was pointed out to me by  Marc Briane during my stay at  Institut National des
Sciences Appliquées de Rennes.  I thank him for the countless fruitful discussions. My thanks also extend  to Valeria Chiad\`{o} Piat for useful remarks.
The author is also a member of the INdAM-GNAMPA project  "Analisi variazionale di modelli non-locali nelle scienze applicate".

\bibliographystyle{plain}

\begin{thebibliography}{99}

\bibitem{A92}{\sc G.~Allaire }: \textquotedblleft Homogenization and two-scale convergence\textquotedblright, {\em SIAM J. Math. Anal.}, {\bf 23}(6) (1992), 1482-1518. 

\bibitem{A12}{\sc G.~Allaire: }{\em Shape optimization by the homogenization method}, Springer, Berlin,  2002, 458 pp.

\bibitem{BD58}{\sc A. Beurling \& J. Deny}: \textquotedblleft Espaces de Dirichlet\textquotedblright, {\em Acta Math.}, {\bf 99} (1958), 203-224.

\bibitem{Brai02}{\sc A.~Braides: }{\em $\Gamma$-convergence for Beginners}, Oxford University Press, Oxford, 2002, 230 pp.

\bibitem{Brai06}{\sc A.~Braides}: \textquotedblleft  A handbook of $\Gamma$-convergence\textquotedblright, {\em Handbook of Differential Equations: Stationary Partial Differential Equations} Vol. 3, Elsevier, (2006), 101-213.

\bibitem{BCP04}{\sc A.~Braides, V. Chiad\`{o} Piat,  \& A. Piatnitski}: \textquotedblleft A variational approach to double-porosity problems \textquotedblright, {\em Asymptot. Anal.}, {\bf 39} (2004), 281-308. 

\bibitem{Bre10}{\sc H.~Brezis: }{\em Functional Analysis, Sobolev Spaces and Partial Differential Equations}, Universitext series, Springer, New York, 2010, 616 pp.

\bibitem{B94}{\sc M.~Briane }: \textquotedblleft Correctors for the homogenization of a laminate\textquotedblright, {\em Adv. Math. Sci. Appl.}, {\bf 4}(2) (1994), 357-379. 

\bibitem{B03}{\sc M. Briane}: \textquotedblleft Non-Markovian quadratic forms obtained by homogenization\textquotedblright, {\em Boll. Uni. Mate. Ital.}, {\bf 6-B}(2) (2003), 323-337. 

\bibitem{BF15}{\sc M. Briane \& G. A.  Francfort}: \textquotedblleft Loss of ellipticity through homogenization in linear elasticity \textquotedblright,  {\em Math. Mod.  Met.  Appl. Sci.}, {\bf 25} (5) (2015), 905-928. 

\bibitem{BF19}{\sc M., Briane, M. \& G. A.  Francfort}: \textquotedblleft A Two-Dimensional Labile Aether Through Homogenization\textquotedblright, {\em Commun. Math. Phys.}, {\bf 367}(2) (2019), 599–628. 

\bibitem{BPM17}{\sc M. Briane \& A. J. Pallares Mart{\'\i}n}: \textquotedblleft  Homogenization of weakly coercive integral functionals in three-dimensional linear elasticity\textquotedblright, {\em J. \'{E}c. Polytech. Math.}, {\bf 4} (2017), 483-514.

\bibitem{DM93}{\sc G.~Dal Maso: }{\em An introduction to $\Gamma$-convergence}, Volume 8 of Progress in Nonlinear Differential Equations and their Applications, Birkh\"{a}user, Boston (1993) 341 pp.

\bibitem{G98}{\sc S. Guti{\'e}rrez}: \textquotedblleft Laminations in linearized elasticity: the isotropic non-very strongly elliptic case \textquotedblright, {\em Q. J. Mech. Appl. Math}, {\bf 57}(4) (2004), 571-582. 


\bibitem{M94}{\sc U.~Mosco }: \textquotedblleft Composite media and asymptotic Dirichlet forms \textquotedblright, {\em J.  Funct. Anal.}, {\bf 123}(2) (1994), 368-421.

\bibitem{N89}{\sc G.~Nguetseng}: \textquotedblleft A general convergence result for a functional related to the theory of homogenization\textquotedblright, {\em SIAM J. Math. Anal.}, {\bf 20}(3) (1989), 608-623. 


\bibitem{T85}{\sc L.~Tartar}: \textquotedblleft Estimations Fines de Coefficients Homog\'{e}n\'{e}is\'{e}s\textquotedblright, {\em Ennio De Giorgi Colloquium, Ed. P. Kr\'{e}e, Pitman Research Notes in Mathematics,}, {\bf 125}(Boston) (1985), 168-187. 

 



\end{thebibliography}

\end{document}